\newcommand{\RN}[1]{%
  \textup{\uppercase\expandafter{\romannumeral#1}}%
}
\newtheorem{cor}{Corollary}[section]
\newtheorem{lem}{Lemma}[section]
\newtheorem{prop}{Proposition}[section]
\newtheorem{defn}{Definition}[section]
\newtheorem{rem}{Remark}[section]
\newtheorem{ex}{Example}[section]
\newtheorem{problem}{Problem}[section]
\newtheorem{conjecture}[problem]{Conjecture}
\newtheorem{question}[problem]{Question}
\newtheorem{thm}[problem]{Theorem}
\title[Scalar curvature comparison and rigidity of weakly convex domains]{Scalar curvature comparison and rigidity of $3$-dimensional weakly convex domains}
\author{Dongyeong Ko and Xuan Yao}
\address{Department of Mathematics, Rutgers University - New Brunswick, Piscataway, NJ 08854}
\email{dk954@math.rutgers.edu}
\address{Department of Mathematics, Cornell University, Ithaca, NY, 14850}
\email{xy346@cornell.edu}
\begin{document}

\begin{abstract}
For a compact Riemannian $3$-manifold $(M^{3}, g)$ with mean convex boundary which is diffeomorphic to a weakly convex compact domain in $\mathbb{R}^{3}$, we prove that if scalar curvature is nonnegative and the scaled mean curvature comparison $H^{2}g \ge H_{0}^{2} g_{Eucl}$ holds, then $(M,g)$ is flat. Our result is a smooth analog of Gromov's dihedral rigidity conjecture and an effective version of extremity results on weakly convex balls in $\mathbb R^3$. More generally, we prove the comparison and rigidity theorem for several classes of manifold with corners. Our proof uses capillary minimal surfaces with prescribed contact angle together with the construction of foliation with nonnegative mean curvature and with prescribed contact angles.
\end{abstract}

\maketitle
\section{Introduction}

Geometric curvature problems to understand the geometric and topological properties of manifolds with global curvature conditions have been a fundamental topic in differential geometry. In particular, comparison and rigidity results in the context of interaction between interior curvature and boundary curvature have led the progress of such understanding. As initiated by Alexandrov \cite{alexandrov1951}, the notion of metric with sectional curvature lower bound was introduced via geodesic triangle comparison theorem, which was established for Riemannian manifolds by Toponogov \cite{toponogov1957, toponogov1959, toponogov1964} (See \cite{cheeger1997structure}, \cite{cheeger2000structure}, \cite{cheeger2000structure3}, \cite{colding2012sharp}, \cite{cheeger2013lower} on Cheeger-Colding-Naber theory for Ricci curvature lower bounded metric).

While less is known for metrics with scalar curvature lower bound, Gromov \cite{gromov2014dirac} conjectured the Riemannian dihedral rigidity comparison theorem, which can be directly proven by Toponogov type comparison theorem in dimension $2$.
\begin{conjecture}[Gromov]
    Let $P$ be a  convex $n$-dimensional polyhedron in $\mathbb R^n$, equip $P$ with Riemannian metric $g$ satisfying:
    \begin{enumerate}
        \item each face is mean convex with respect to $g$;
        \item the dihedral angles between faces with respect to $g$ is no larger than the diheral angles with respect to the Euclidean metric $g_0$;
        \item $R_g\geq 0$,
    \end{enumerate}
    then the metric $g$ must be flat.
\end{conjecture}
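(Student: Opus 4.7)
The plan is to attack Gromov's conjecture via the capillary minimal surface method, which is the natural scalar-curvature analog of the $\mu$-bubble arguments used in related contexts. I focus on the realistic case $n = 3$ where Gauss--Bonnet is directly available; higher dimensions are the chief obstacle and likely require a Dirac-type substitute.

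First I would pick two ``opposite'' faces $F_0, F_1$ of $P$ and consider the variational problem of minimizing
\[
\mathcal{F}(\Sigma) = \mathrm{Area}_g(\Sigma) - \sum_{i \ne 0,1} \cos(\alpha_i^0)\,\mathrm{Area}_g(T_i)
\]
over surfaces $\Sigma \subset (P,g)$ separating $F_0$ from $F_1$, where $T_i$ is the region cut out by $\Sigma$ on the face $F_i$ and $\alpha_i^0$ is the \emph{Euclidean} dihedral angle between $F_0$ and $F_i$. A minimizer is a capillary minimal surface meeting each lateral face $F_i$ at contact angle $\alpha_i^0$. Existence and partial regularity follow from standard GMT; the dihedral comparison hypothesis together with the mean convexity of the faces supplies the barrier structure needed to keep $\Sigma$ from escaping across edges.

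Next, I would feed the minimizer $\Sigma$ into the capillary second variation formula, substitute the Gauss equation to rewrite $|A_\Sigma|^2 + \mathrm{Ric}(\nu,\nu)$ in terms of $R_g$ and the intrinsic Gauss curvature $K_\Sigma$, and apply Gauss--Bonnet on $\Sigma$,
\[
2\pi \chi(\Sigma) = \int_\Sigma K_\Sigma + \int_{\partial \Sigma} \kappa_g + \sum_{p} (\pi - \beta_p),
\]
with $\beta_p$ the interior angles at the corners of $\partial \Sigma$. The three hypotheses each contribute a nonnegative error term (from $R_g \ge 0$, mean convexity of the faces, and the dihedral angle comparison at corners), and the resulting inequality pins down $\Sigma$ to have the topology of a disk with all terms saturated. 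Equality then forces $\Sigma$ to be flat and totally geodesic, $R_g \equiv 0$ along $\Sigma$, and saturation $\alpha_i^g = \alpha_i^0$ of the dihedral comparison along $\partial \Sigma$.

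Finally, to globalize this pointwise rigidity I would build, via the implicit function theorem, a foliation $\{\Sigma_t\}$ of a neighborhood of $\Sigma$ by capillary constant mean curvature surfaces with the same prescribed Euclidean contact angles. The maximum principle applied to the mean curvature function $H_t$ along the foliation, combined with a Hopf-type lemma at the lateral faces via the Robin-type boundary condition coming from the capillary angle, forces every leaf to be totally geodesic and flat, so that $g$ splits as a Euclidean product near $\Sigma$; sweeping across $P$ yields flatness of $(P,g)$. The two principal obstacles are: (a) the corner regularity of the capillary minimizer $\Sigma$ at triple junctions where it meets an edge of $P$, which is genuinely delicate and appears to force the weak convexity hypothesis used in the paper's main result; and (b) the higher-dimensional extension, where Gauss--Bonnet is unavailable and one needs either a Dirac operator method or an inductive $\mu$-bubble scheme with a new substitute for the topological count.
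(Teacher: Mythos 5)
The statement you were asked to prove is not actually proven in this paper: it is stated as Gromov's \emph{conjecture}, which remains open in full generality. The authors cite it as motivation and then prove a different theorem (their Theorem \ref{comparison}), which concerns smooth or mildly singular weakly convex domains in $\mathbb{R}^3$ with the scaled mean curvature comparison $H^2 g \ge H_0^2\, g_{Eucl}$ in place of the face-by-face mean convexity and dihedral angle hypotheses. So there is no ``paper's own proof'' of this conjecture to compare against.

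That said, your sketch is a reasonable summary of the known capillary-minimal-surface strategy for dimension $3$, which is essentially Li's approach (\cite{li2020polyhedron}, \cite{li2024dihedral}) for cone-type and prism-type polyhedra, and it is also the template the present paper adapts to its smooth-boundary setting (stability inequality, Gauss--Bonnet, second variation with the capillary boundary term $Q$, CMC foliation with prescribed contact angle, local splitting). You correctly identify the two places where this strategy currently falls short of the full conjecture: regularity of the capillary minimizer at lower-strata singularities (edges and vertices of $P$), and the absence of a Gauss--Bonnet substitute in dimension $n \ge 4$. Because of exactly these obstructions, what you have written is a plan that reproduces the known special cases, not a proof of the conjecture as stated. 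One more gap worth flagging explicitly: even in $n=3$, the existence step requires a nontrivial minimizer, and for polyhedra with many faces it is not automatic that the infimum of your functional $\mathcal{F}$ is attained by a nontrivial competitor rather than degenerating to a face or an edge; Li's arguments handle this for cones and prisms via the angle comparison giving a barrier at the top and bottom, but a general convex polyhedron needs a genuinely new argument here (this is related to why the present paper restricts to weakly convex smooth domains or to its ``generalized prism'' and conical cases and builds explicit foliation barriers near a strictly convex point or near the cone vertex). In short, your proposal is a faithful account of the standard approach and of its limitations, but it does not close the conjecture and should not be presented as a proof of it.
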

 Gromov proved the conjecture for cubical polyhedra by introducing doubling approach and reduced to torus rigidity theorem (see \cite{schoen1979existence} and \cite{gromov1980spin}), and sketched a proof via harmonic spinors; see \cite{gromov2014dirac}, \cite{gromov2021lecturesscalarcurvature}, \cite{gromov2023convexpolytopesdihedralangles}. Notably, Li proved the conjecture for the conical and prism-type polyhedron applying the variational method with capillary minimal surfaces; see \cite{li2020polyhedron}, \cite{li2024dihedral}.  Wang-Xie-Yu \cite{wang2021gromov} developed spin geometry on cornered manifold to prove Gromov's conjecture. The conjecture has also been proven under extra angle assumptions via harmonic spinor methods and artful smooth-out procedures; see Brendle \cite{brendle2023matchingangle}, Brendle-Wang \cite{brendle2023gromovsrigiditytheorempolytopes}. Recently, Bär-Brendle-Chow-Hanke \cite{bar2023rigidity} proved the rigidity theorems for initial data sets via the spinor approach, which we learned that they introduced a similar boundary condition after finishing our work. We refer to Gromov \cite{gromov2024convex} for smooth-out procedures.

In contrast to dihedral rigidity problems deal with the singular mean curvature generated by its dihedral angle, in the context of Gauss-Bonnet theorem in dimension $2$, it is natural to consider interactions between Scalar curvature and `smooth' mean curvature in smooth domains in dimension $3$ or higher.

 A natural question arises:
 \begin{question}
     What is the corresponding boundary comparison condition for smooth domains in $\mathbb R^3$?
 \end{question}

In this direction, Gromov \cite{gromov2021lecturesscalarcurvature} showed the following extremality for the standard unit ball. We denote $H$ and $H_{0}$ to be the mean curvature of $\partial M$ with respect to $g$ and the Euclidean metric $g_{Eucl}$, respectively. Moreover, we denote $\sigma$ and $\overline{\sigma}$ to be induced metrics of $g$ and $g_{Eucl}$ on $\partial M$, respectively.
\begin{thm}[\cite{gromov2021lecturesscalarcurvature}, Section 5.8.1] Assume that $(M^{3},g)$ be a connected compact manifold with nonnegative scalar curvature, the boundary $\partial M$ of $(M^{3},g)$ is diffeomorphic to a standard $2$-sphere, and satisfies the comparisons of the mean curvature $H \ge H_{0}$ and of the induced metrics $\sigma \ge \overline{\sigma}$, then $(M,g)$ is flat.
\end{thm}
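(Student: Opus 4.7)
The plan is to sweep out $M$ by a family of stable capillary minimal $2$-disks indexed by great circles on the model boundary sphere, and to combine leafwise stability with the Gauss equation, Gauss--Bonnet on each leaf, and both comparison hypotheses to force every leaf --- and hence the ambient metric --- to be flat. The construction parallels the capillary foliation strategy indicated in the paper's outline.

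First, the hypothesis $\sigma \geq \overline{\sigma}$ provides a $1$-Lipschitz identification $\Phi : (\partial M, \sigma) \to (S^{2}, \overline{\sigma})$. For each oriented great circle $\gamma \subset S^{2}$, I would solve a capillary Plateau problem for disks $\Sigma \subset M$ with $\partial \Sigma = \Phi^{-1}(\gamma) \subset \partial M$ and a suitably chosen contact angle along $\partial M$. The mean convexity $H \geq H_{0} = 2$ supplies the required barriers, and standard existence and regularity for stable capillary minimal surfaces produces a smooth stable minimal disk $\Sigma_{\gamma} \subset M$.

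Second, I would test the stability inequality on $\Sigma_{\gamma}$ with $\phi \equiv 1$ and combine with the Gauss equation $2\,\mathrm{Ric}_{g}(\nu,\nu) = R_{g} - 2K_{\Sigma_{\gamma}} - |A_{\Sigma_{\gamma}}|^{2}$, Gauss--Bonnet on the disk $\Sigma_{\gamma}$, the contact angle identity that relates the geodesic curvature of $\partial \Sigma_{\gamma}$ in $\Sigma_{\gamma}$ to a combination of principal curvatures of $\partial M$, and the length comparison $L_{\sigma}(\partial \Sigma_{\gamma}) \geq L_{\overline{\sigma}}(\gamma) = 2\pi$ obtained from $\sigma \geq \overline{\sigma}$. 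This should yield a sharp inequality of the schematic form
\begin{equation*}
\int_{\Sigma_{\gamma}} \!\Bigl(\tfrac{1}{2}R_{g} + \tfrac{1}{2}|A_{\Sigma_{\gamma}}|^{2}\Bigr)\,dA \;+\; \int_{\partial \Sigma_{\gamma}}(H_{\partial M} - H_{0})\,d\ell \;+\; H_{0}\bigl(L_{\sigma}(\partial \Sigma_{\gamma}) - L_{\overline{\sigma}}(\gamma)\bigr) \;\leq\; 0,
\end{equation*}
in which each summand is non-negative by the hypotheses. Forcing equality in every term gives that $\Sigma_{\gamma}$ is totally geodesic and intrinsically flat, $R_{g} \equiv 0$ on $\Sigma_{\gamma}$, $H_{\partial M} \equiv H_{0}$ along $\partial \Sigma_{\gamma}$, and $\sigma = \overline{\sigma}$ there.

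Finally, varying $\gamma$ through all great circles sweeps out $M$ by such leaves. Leafwise flatness together with the Codazzi equations across the resulting foliation gives a metric splitting $g = dt^{2} + h_{t}$ with each $h_{t}$ flat, so that $(M,g)$ is isometric to the Euclidean unit ball. The main obstacle is this last step: one must verify that the stable capillary disks vary smoothly with $\gamma$ without bifurcation, that the collection covers all of $M$, and that the polar degenerations (where a great circle collapses to a point) are handled correctly. A secondary difficulty, because both comparison inequalities are non-strict, is that the leafwise rigidity from Step 2 must be integrated across the entire family to yield global flatness; this is where the specific choice of contact angle and of the family of boundary curves becomes essential.
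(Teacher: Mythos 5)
This theorem is not proved in the paper; it is quoted from Gromov (Section~5.8.1 of his Lectures on Scalar Curvature) as motivation. The paper's own contribution, Theorem~\ref{comparison}, replaces the pair of hypotheses $H\ge H_0$ and $\sigma\ge\overline{\sigma}$ with a single scale-invariant condition $H^2 g\ge H_0^2 g_{Eucl}$, and its proof (Sections 2--5) is a genuinely different variational argument using a prescribed-contact-angle capillary functional with the boundary curve left \emph{free}. So there is no paper proof to compare against; with that caveat, here is an assessment of your proposal on its own terms.

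There are two concrete gaps. First, you are trying to run both a fixed-boundary Plateau problem and a capillary (free-boundary, prescribed-contact-angle) variational problem simultaneously: you fix $\partial\Sigma_\gamma=\Phi^{-1}(\gamma)$ \emph{and} impose a contact angle. These are incompatible constraints. If the boundary is fixed, the admissible variations vanish on $\partial\Sigma$, the constant test function $\phi\equiv 1$ is not allowed in the stability inequality, and there is no boundary term of the form $\int_{\partial\Sigma}(H_{\partial M}-\kappa)$ to exploit; the whole Schoen--Yau-type rearrangement on which your schematic display rests comes from the free-boundary (or capillary) second variation. If instead the boundary is free with prescribed angle, then $\partial\Sigma_\gamma$ is whatever the minimizer chooses, you have no a priori control on its image under $\Phi$, and the length bound $L_\sigma(\partial\Sigma_\gamma)\ge L_{\overline{\sigma}}(\gamma)=2\pi$ does not follow from $\sigma\ge\overline{\sigma}$ — the minimizer could have a very short boundary curve. (This is exactly why the paper's Lemma~\ref{curvstimate} replaces a length comparison with a winding-number argument and the local estimate of Lemma~\ref{btlowerbound}.)

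Second, the proposed sweepout does not exist: the family of all great circles on $S^2$ is a $2$-parameter family in which any two distinct members intersect, so they cannot be the boundary curves of a foliation of $M$ by disks. A sweepout by disjoint boundary curves would use latitude circles relative to a chosen axis (as the paper does, slicing by the $z$-coordinate), not great circles, and with latitude circles you lose the length normalization $L_{\overline{\sigma}}(\gamma)=2\pi$ that your inequality relies on. Even setting both issues aside, passing from "each leaf is infinitesimally rigid'' to "$g$ is flat'' needs a CMC-foliation/local-splitting step (as in Proposition~\ref{prop:cmcfoliation} and Theorem~\ref{thm:local splitting}), not merely the Codazzi equations across a family that has not been shown to foliate a neighborhood of any leaf.
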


Recently, Chai-Wang \cite{CW1} proved comparison and rigidity type theorem with boundary condition $H \ge H_{0}$ and $\sigma \ge \overline{\sigma}$ on $\mathbb{S}^{1}$-rotationally symmetric setting using capillary minimal surface (See \cite{chai2024dihedral} for rigidity in hyperbolic setting). See Wu \cite{wu2024capillary} for recent capillary minimal surface approach on nonnegative scalar curvature metric in different setting. We refer to Shi-Tam \cite{shi2002positive} for a total mean curvature comparison theorem under the existence of boundary isometric embedding into Euclidean spaces i.e. $\sigma = \overline{\sigma}$. 

For the recent construction result of critical point of perturbed area functional such as prescribed mean curvature surface and capillary surface -- with min-max approach, we refer to Zhou-Zhu \cite{zhou2018existence} and Li-Zhou-Zhu \cite{li2021min}.

Our main result is the following comparison and rigidity result:

\begin{thm}[Main Theorem]\label{comparison}
Let $(M^{3},g)$ be a connected compact manifold,  $M$ is diffeomorphic to a weakly convex domain in $\mathbb{R}^{3}$. Suppose scalar curvature on $M$ satisfies $R_g \ge 0$, and has a mean convex boundary and mean curvature satisfies 
\[
H^{2}g\ \ge H_{0}^{2} g_{Eucl},
\]
where the metrics are restricted to $T(\partial M)$. 

Assume that one of the following cases is satisfied:
\begin{enumerate}
\item $\partial M$ is smooth;
\item $\partial M=\Sigma_1\cup \Sigma\cup \Sigma_2$, where $\Sigma_1$ and $\Sigma_2$ are smooth sub-domains of parallel planes, $\Sigma$ is also smooth and the dihedral angles between the surfaces at intersection points with respect to $g$ is no larger than that with respect to $g_{Eucl}$, we call such $M$ as generalized prism type;
\item $\partial M\setminus \{ p\}$ is smooth and $p$ is a conical point satisfying \textit{conical comparison}.
\end{enumerate}
Then $g$ is Euclidean.
\end{thm}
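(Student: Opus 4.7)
Following Li \cite{li2020polyhedron} and Chai--Wang \cite{CW1}, the plan is to combine a capillary min-max for minimal disks with a prescribed-angle CMC foliation. Fix a diffeomorphism $F:(M,g)\to \Omega\subset(\mathbb R^{3},g_{\text{Eucl}})$ with $\Omega$ weakly convex, and choose a natural sweepout of $\Omega$ by convex planar disks $\{D_{t}\}_{t\in[0,L]}$: parallel planar slices in the smooth and generalized-prism cases, and spherical caps centered at the conical point in case (3). Transferring through $F$ yields a family $\{\tilde D_{t}\}\subset M$ together with a prescribed contact-angle function $\theta_{0}(t,\cdot):\partial \tilde D_{t}\to(0,\pi)$ read off from the Euclidean geometry. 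This $\theta_{0}$ is the essential input for the capillary variational problem on $(M,g)$.

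\textbf{A distinguished capillary minimal disk and rigidity along it.} Using the existence theory of capillary min-max \cite{li2021min,li2024dihedral} (with adaptations near corners in case (2) and near the cone vertex in case (3)), one produces a smooth stable capillary minimal disk $\Sigma_{*}\subset M$ meeting $\partial M$ at angle $\theta_{0}(t_{*},\cdot)$ for some $t_{*}$. Testing the second variation with $\phi\equiv 1$, substituting the Gauss equation for $\mathrm{Ric}_{g}(\nu,\nu)$, and invoking Gauss--Bonnet on $\Sigma_{*}$ gives an integral inequality whose bulk contribution is
\begin{equation*}
\tfrac{1}{2}\int_{\Sigma_{*}}\bigl(R_{g}+|A_{\Sigma_{*}}|^{2}\bigr)
\end{equation*}
and whose boundary term combines $k_{g}^{\partial\Sigma_{*}}$, $h_{\partial M}(\nu,\nu)$, and trigonometric factors in $\theta_{0}$. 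The scaled hypothesis $H^{2}g\ge H_{0}^{2}g_{\text{Eucl}}$ on $T\partial M$ is precisely the condition that makes this boundary integrand dominate its Euclidean counterpart along $\partial\Sigma_{*}$ (the Euclidean value of the angle having been built into $\theta_{0}$). Combined with $R_{g}\ge 0$ and mean-convexity $H\ge 0$, every intermediate inequality must be an equality: $\Sigma_{*}$ is totally geodesic and intrinsically flat, $R_{g}\equiv 0$ along $\Sigma_{*}$, and the boundary data along $\partial\Sigma_{*}$ agrees with the Euclidean model.

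\textbf{Foliation, splitting, globalization.} Around $\Sigma_{*}$, the implicit function theorem applied to the $\theta_{0}(t,\cdot)$-prescribed-angle capillary CMC problem provides a smooth one-parameter family $\{\Sigma_{t}\}$ of capillary CMC disks foliating a neighborhood. The lapse function of this foliation satisfies a Jacobi-type mixed elliptic boundary problem; the maximum principle together with the infinitesimal rigidity on $\Sigma_{*}$ forces the mean curvature $H_{t}$ to have a definite sign on each side, and then $R_{g}\ge 0$ together with a standard stability/Fischer-Colbrie-type argument upgrades this to $H_{t}\equiv 0$. Consequently each $\Sigma_{t}$ is totally geodesic, and the metric splits locally as a flat product, matching the Euclidean model. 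An openness-closedness continuation in $t$, using that the rigidity propagates to each new leaf, extends the foliation across all of $M$ and produces an isometry from $(M,g)$ to the Euclidean domain $\Omega$.

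\textbf{Main obstacle.} The decisive technical step is constructing and continuing this foliation at the singular strata of $\partial M$ in cases (2) and (3): near a dihedral edge of the generalized prism or at the conical vertex, the capillary CMC problem becomes a mixed boundary value problem with corner or conic singularities. Setting up the correct weighted-Sobolev or polyhomogeneous functional framework so that the implicit function theorem applies, verifying that the leaves remain embedded, meet $\partial M$ at the prescribed angle even at the singular strata, and that the continuation does not degenerate, is where the additional dihedral-angle and conical-comparison hypotheses of cases (2) and (3) are essentially used, and constitutes the main analytic difficulty of the proof.
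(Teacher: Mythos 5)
Your broad architecture (capillary variational problem, stability inequality with $\phi\equiv 1$, Gauss--Bonnet, prescribed-angle CMC foliation, local-to-global splitting) matches the paper's skeleton, but two of the paper's central ingredients are either elided or incorrectly anticipated, and this leaves genuine gaps.

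First, the paper does not use capillary min-max; it \emph{minimizes} the prescribed-angle capillary functional $A^{\overline\rho}$ over an admissible class $\mathcal{E}$ of contractible domains. The reason this distinction matters is that the minimizer can trivially degenerate to one of the two ``caps'' $\partial M\cap\{z=t_{\pm}\}$, and establishing nontriviality of the minimizer is a substantial theorem in its own right, handled case-by-case in Section~5. In the smooth case this is done by constructing a barrier foliation $\Sigma_{s,t}$ near a strictly convex point $p_{+}$ of $\partial M$ (Propositions \ref{nonzeromcfoliation}--\ref{barriergivesminimizer}), using ``weighted'' quadratic graphs that must be calibrated against the non-symmetric second fundamental form $c_{ij}$ and the constant background tensor $a_{ij}$; this is the paper's main technical innovation for case (1) and your proposal does not address it at all. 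For case (3), the paper proves a dichotomy (Lemma~\ref{nontrivialcone} vs.\ Theorem~\ref{thm:CMCnearvertex}): strict conical comparison forces nontriviality by a scaling/blow-up argument, while matching cone angle requires building the CMC foliation directly at the vertex; your ``spherical caps centered at the conical point'' does not correspond to anything the paper does. Identifying the main analytic obstacle as corner/conic regularity in cases (2) and (3) therefore misplaces the difficulty: the genuinely hard new analysis is the nontriviality barrier in the smooth case.

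Second, the step where you assert that $H^{2}g\ge H_{0}^{2}g_{\text{Eucl}}$ ``is precisely the condition that makes the boundary integrand dominate its Euclidean counterpart'' hides the crux of the argument and, as stated, is not a valid deduction. There is no a priori pointwise domination of the boundary integrand $\frac{1}{\sin\overline\rho}(H_{\partial M}-\nabla_{\nu}\overline\rho)$ by a fixed Euclidean quantity; $\nu$ is a $g$-unit vector along a curve $\partial\Sigma$ whose location is unknown. The paper's Lemma~\ref{btlowerbound} obtains the pointwise bound
\begin{equation*}
H_{\partial M}-\nabla_{\nu}\overline\rho \;\ge\; \big\langle \tau,\ (\nabla_{\overline\tau}\overline\rho)\,\overline\nu + (H_{0}-\nabla_{\overline\nu}\overline\rho)\,\overline\tau\big\rangle_{g_{\text{Eucl}}},
\end{equation*}
where the left side is intrinsic to $(M,g)$ while the right side is an Euclidean pairing against the $g$-tangent $\tau$, and this requires: (i) a decomposition of $\nu$ in the Euclidean frame $\{\overline\tau,\overline\nu\}$; (ii) a nontrivial convex-combination trick using weights $W_{1},W_{2}$ whose nonnegativity rests on the weak convexity inequality $(H_{0}-\nabla_{\overline\nu}\overline\rho)\nabla_{\overline\nu}\overline\rho-(\nabla_{\overline\tau}\overline\rho)^{2}\ge0$ from Lemma~\ref{lem:weak convexity}; and (iii) the scaled comparison applied to vectors of the form $a\tau+b\nu$ (Lemma~\ref{mccomparison}). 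Then the boundary integral is bounded below by $2\pi$ not by comparing integrands but by a winding-number identity (Lemmas~\ref{windingnumber}, \ref{curvstimate}) for the resulting exact form along $\partial\Sigma$. Without this local comparison lemma and the winding-number globalization your Gauss--Bonnet inequality chain does not close, since you cannot conclude $\chi(\Sigma)=1$ and the rigidity identities from ``every intermediate inequality must be an equality.''

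The foliation-and-splitting portion of your plan (Proposition~\ref{prop:cmcfoliation}, Theorem~\ref{thm:local splitting}, Corollary~\ref{cor:infinitesmally}) is in the spirit of the paper, except that the step from sign-definiteness of $H(t)$ to $H(t)\equiv 0$ uses that $\Sigma_{0}$ is a \emph{minimizer} of the capillary energy, not a Fischer--Colbrie stability argument; this again ties back to why a genuine minimizer, rather than a min-max critical point, is needed.
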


\begin{rem}
    After completing this work, we were informed by the authors of \cite{wang2021gromov}, that an equivalent boundary comparison condition was introduced in \cite{wang2021gromov}.
\end{rem}

We use the Euclidean ball example to explain more about our boundary comparison condition.
\begin{ex}
Consider Euclidean ball of radius $r$, the mean curvature of its boundary with respect to the Euclidean metric is $\frac{n-1}{r}$, where $n$ is the dimension of the Euclidean space. 
\end{ex}
From this example, we see that it may not be reasonable to propose merely mean curvature comparison on the boundary since the mean curvature itself is not scaling invariant. If we aim to propose a boundary comparison that, together with non-negative scalar curvature ensuring Euclidean rigidity, we should propose a scaled mean curvature comparison condition which is scaling invariant.

\begin{rem}
  Another possible scaling invariant boundary comparison assumption in dimension $3$ is $H_g^2da_g\geq H_0^2da_{g_{Eucl}}$. However, we do not know whether this boundary comparison condition together with non-negative scalar curvature condition gives Euclidean rigidity. In fact, there is little room to weaken the boundary comparison condition we propose for the local estimates for stability operator of capillary minimal surfaces. Moreover, considering the higher dimensional generalization, our boundary comparison condition may make more sense.
\end{rem}

\begin{defn}[Conical comparison]
    For a conical point $p$, we say it satisfies conical comparison if at $p$ for any $z_1,z_2\in T_pM$,
    \[
    \frac{\langle z_1,z_2\rangle_g}{|z_1|_g|z_2|_g}\geq \frac{\langle z_1,z_2\rangle_{g_{Eucl}}}{|z_1|_{g_{Eucl}}|z_2|_{g_{Eucl}}}
    \]
    $T_pM$ is the tangent cone at $p$.
\end{defn}

\begin{rem} $ $
  \begin{enumerate}
      \item Our results hold for weakly convex domains, where Gromov posed the convexity assumption in his conjecture;
      \item The conical comparison is a generalization of the dihedral angle comparison at the vertex.
  \end{enumerate}
\end{rem}

The first key ingredient in our proof is the local comparison estimate of boundary terms from the stability operator for capillary minimal surfaces. We obtain the derivation of local comparison by applying mean curvature comparison and decomposition trick of the mean curvature term of the boundary in Lemma \ref{btlowerbound}. Then we obtain the global lower bound of boundary terms by applying a winding number argument; see Section \ref{sec:localcomparison}. 

We expect that this local comparison and the tricks behind have further applications in other geometric problems where the contact angles are not constant and the manifolds are without symmetry assumption. Moreover, since our approach based on capillary minimal surfaces does not rely on the spin structure of manifolds, the generalization of the arguments via capillary minimal surface will lead to obtain comparison and rigidity results in a larger class of manifolds, in particular, in higher dimension settings.

The main tool to prove the smooth case is a local construction of nonnegative mean curvature foliation with prescribed contact angle under our scaled mean curvature comparison. This is a generalization of Chai-Wang's construction \cite{CW1} in $\mathbb{S}^{1}$-symmetry cases.

\subsection{Sketch of the Proof}

We describe the outline of the proof of Theorem \ref{comparison} mainly on the smooth case (1), which can be generalized to other cases. We analyze a capillary minimal surface which achieves a nontrivial minimizer of a capillary area functional, whose prescribed contact angle function is determined by the Euclidean domain $(M,g_{Eucl})$. We foliate the Euclidean domain with flat planes that are perpendicular to the $z$-axis, and extract the prescribed angle function $\bar{\rho}\in(0,\pi)$ on $\partial M$ as an intersection angle between leaves and $\partial M$ from the Euclidean domain. Then we define the capillary functional by
\begin{equation*}
    A^{\overline{\rho}}(\Omega) =  |\partial^{i}\Omega| - \int_{\partial^{b}\Omega} \cos \overline{\rho} d \mathcal{H}^{2},
\end{equation*}
where $\Omega \subseteq M$ is an open domain, and $\partial^{i}\Omega$ and $\partial^{b}\Omega$ are the interior and boundary part of $\partial M$, respectively.

We take a minimizer $\Omega$ of the functional $A^{\overline{\rho}}$, corresponding $\Sigma = \partial^{i}\Omega$, and the normal part of the variation by $f$. Then by the stability of minimizer, we have the following stability inequality:
\begin{equation*}
 - \int_{\Sigma} (f \Delta f + (|A|^{2}+ \text{Ric}(N,N))f^{2}) d\mathcal{H}^{2} + \int_{\partial \Sigma} f \Big( \frac{\partial f}{\partial \eta}- Qf \Big) \ge 0,
\end{equation*}
where $Q$ is defined to be
\begin{equation*}
    Q = \frac{1}{\sin \overline{\rho}} A_{\partial M} (\nu,\nu) - \cot \overline{\rho} A(\eta,\eta) + \frac{1}{\sin^{2} \overline{\rho}} \partial_{\nu} \cos \overline{\rho}.
\end{equation*}
After plugging $f \equiv 1$ into the stability inequality and simplifying the boundary terms by applying the Schoen-Yau trick, our local comparison estimate (Lemma \ref{btlowerbound}) gives control of the boundary term. 

The key idea of the local comparison estimate is projecting the boundary terms, written in terms of $\nu$ and $\tau$ which are normal and tangent vector field of $\partial \Sigma$ on $\partial M$, to those on the boundary of standard Euclidean slices, hence in $\overline{\tau}$ and $\overline{\nu}$. We apply the boundary mean curvature comparison to properly decomposed terms and obtain the following inequality. See Lemma \ref{btlowerbound} for details.
\begin{equation*}  
    ( H(p) - \nabla_{\nu} \overline{\rho} ) \ge  \langle \tau, (\nabla_{\overline{\tau}} \overline{\rho}) \overline{\nu}+(H_{0}(p)- \nabla_{\overline{\nu}}\overline{\rho}) \overline{\tau} \rangle_{g_{Eucl}}.
\end{equation*}

Then we can take advantage of integrating this value over a curve $\partial \Sigma$, since the right hand side of the previous inequality consists of terms in Euclidean terms and $\tau$, tangential vector field of $\partial \Sigma$ on $\partial M$. By integration, we obtain a global lower bound by a winding number.

With the global estimate, we prove that a non-trivial minimizer of the capillary functional must be infinitesimally rigid. Constructing a CMC foliation with prescribed capillary angle near the infinitesimally rigid minimal surface, one can prove a local splitting theorem which guarantees the metric to be Euclidean. By connectedness of $M$, the local splitting theorem can be generalized to a global splitting theorem and thus $(M,g)$ must be Euclidean.

Another key part of our proof is to prove the existence of a nontrivial minimizer of $A^{\overline{\rho}}$ on $(M,g)$. In the smooth case, we construct the foliation with a nonnegative mean curvature and prescribed contact angle near a strictly convex point under our scaled mean curvature comparison condition. Once we constructed such a foliation, it serves as a barrier to guarantee the existence of a non-trivial minimizer of $A^{\overline{\rho}}$ and hence we obtain the Euclidean rigidity.

Our technical strategy is to construct another foliation by slicing a small neighborhood of a vertex with leaves with a nonnegative mean curvature, while optimizing a contact angle by keeping a contact angle to be smaller than or equal to the contact angle of the Euclidean foliation. Inspired by Chai-Wang's computations of mean curvature and contact angle, we construct foliations using quadratic graphs. The difficulty arises from the fact that the axes of the local coordinate of $\partial M$ which is an elliptic paraboloid and those induced metric by projection to the first two coordinates may not agree, because of the absence of rotational symmetry. We introduce the foliations with `weighted' quadratic graphs in Section 5 to overcome this technical obstacle by adjusting this discrepancy.

For the generalized prism type, the minimizer is non-trivial since bottom and top face serves as natural barriers by the dihedral angle comparison.

For the conical point case, we first prove that if the conical angle comparison is strict at the conical point then the minimizer is non-trivial (Lemma \ref{nontrivialcone}) and the local estimates for the stability operator guarantees the metric to be flat. Then for the matching conical angle, we construct a CMC foliation with prescribed contact angle which 
are infinitesimally rigid and hence the metric is flat by local splitting theorem.

\subsection{Organization}
 The organization of this paper is as follows. In Section $2$, we introduce the prescribed capillary functional and its stability operator and regularity of minimizers. In Section $3$, we prove the local comparison argument and global lower bound of the boundary term of stability operator. In Section $4$, we prove the rigidity by showing the local splitting theorem. In Section $5$, we prove the nontriviality of the minimizer by constructing local foliations.

 In Appendix A, we prove the computations on Euclidean coordinate.

\subsection{Acknowledgements}
We would like to thank the first author's advisor Prof. Daniel Ketover and the second author's advisor Prof. Xin Zhou for their helpful discussions and encouragements on this project. We are grateful to Prof. Fernando Codá Marques for inspiring conversations. We are also grateful to Prof. Christine Breiner, Prof. Otis Chodosh and Prof. Antoine Song for their interest in our work. We appreciate to Prof. Jonathan Zhu for his interest in our work and pointing out typos. D.K. thanks to Tin Yau Tsang for helpful discussions on the context. D.K. is partially supported by University and Louis Bevier fellowship, NSF grant DMS-1906385 and DMS-2405114 and X.Y. is supported by Hutchinson Fellowship.

\section{Preliminaries}
We consider a compact manifold with boundary $(M^{3},g)$ whose boundary is mean convex and diffeomorphic to a weakly convex sphere in $\mathbb{R}^{3}$. Suppose that $\Sigma \subset M$ is a surface with boundary whose boundary $\partial \Sigma \subset \partial M$. Assume $\Sigma$ separates $\text{int}(M)$ into two connected components. We fix one component containing $p_{+}$ and call this $\Omega$. We denote the outward pointing unit normal vector field of $\partial M$ in $M$ by $X$ and $N$ the unit normal vector field of $\Sigma$ pointing inside $\Omega$. Denote $\nu$ the unit normal vector field of $\partial \Sigma$ in $\partial M$ pointing outward $\Omega$, and $\eta$ the unit normal vector field of $\partial \Sigma$ in $\Sigma$ pointing outward. We define the contact angle, which is a intersecting angle between $\Sigma$ and $\partial M$ by $\rho$. i.e. $\cos \rho = \langle X, N \rangle$. We define an inner product in metric $g$ and $g_{Eucl}$ by $\langle \cdot, \cdot \rangle_{g}$ and $\langle \cdot, \cdot \rangle_{g_{Eucl}}$, respectively.

\subsection{Setup of capillary functional with prescribed angle}

In this section, we define a capillary functional we will work on for the variational problem. We consider $(M,g_{Eucl})$ where $g_{Eucl}$ is an Euclidean metric and take an Euclidean coordinate. Denote $t_{-}$ and $t_{+}$ by a minimum and maximum value of $z$-coordinate, respectively. Then take a contact angle function $\overline{\rho}: \partial M \rightarrow [-\pi, \pi]$ in Euclidean metric by
\begin{equation}
    \cos \overline{\rho} = \Big\langle X, \frac{\partial}{\partial z} \Big\rangle_{g_{Eucl}}.
\end{equation}
We denote $\partial^{b}\Omega$ and $\partial^{i}\Omega$ to be $\partial \Omega \cap \partial M$ and $\partial \Omega \cap \text{int}(M)$, respectively. We set up the prescribed capillary area functional by
\begin{equation}
    A^{\overline{\rho}}(\Omega) =  |\partial^{i}\Omega| - \int_{\partial^{b}\Omega} \cos \overline{\rho} d \mathcal{H}^{2}.
\end{equation}
We consider the minimization problem of $A^{\overline{\rho}}$ on $(M,g)$ as follows:
\begin{equation}\label{eq:minimization}
\mathcal{I} = \inf \{ A^{\overline{\rho}}(\Omega) : \Omega \in \mathcal{E} \},
\end{equation}
where $\mathcal{E}$ is a collection of contractible open sets $E$ such that either one of $\partial M \cap \{ z = t_{+} \} \subset E$ or $\partial M \cap \{ z = t_{-} \} \subset E$ holds. 
\subsection{First and second variational formula} We suppose a relative boundary of solution of the minimization problem is regular and denote $\Sigma = \partial_{rel} \Omega$. We also define
\begin{equation*}
    F(\Sigma) = A^{\overline{\rho}}(\Omega) =  |\Sigma| - \int_{\partial^{b}\Omega} \cos \overline{\rho} d \mathcal{H}^{2}.
\end{equation*}
As a deformation of $\Sigma$, we define $\Psi$ to be a family of diffeomorphisms $\Psi : (-\epsilon, \epsilon) \times \Sigma \rightarrow M$ such that $\Psi(t, \cdot) : \Sigma \rightarrow M$ such that $\Psi(t, \partial \Sigma) \subset \partial M$ and $\Psi(0, \Sigma) = \Sigma$ for $t \in (-\epsilon, \epsilon)$. 

Denote $\Sigma_{t} = \Psi(t,\Sigma)$ and $E_{t}$ as the corresponding component separated by $\Sigma_{t}$. Let $Y = \frac{\partial \Psi(t,\cdot)}{\partial t}|_{t=0}$ be the vector field generating $\Psi$. Note that $Y$ is tangential to $\partial M$ along $\partial \Sigma$. Define $\mathcal{A}(t) = F(\Sigma_{t})$ and $f = \langle Y, N \rangle _{g}$. Then we have the first variation of $\mathcal{A}(t)$ by 
\begin{equation}\label{firstv}
    \mathcal{A}'(0) = - \int_{\Sigma} H f d\mathcal{H}^{2} + \int_{\partial \Sigma} \langle Y, \eta - \nu \cos \overline{\rho}  \rangle_{g} d\mathcal{H}^{1},
\end{equation}
where $H$ is a mean curvature vector of $\Sigma$. We call $\Sigma$ by \emph{capillary minimal surface} if $ \mathcal{A}'(0)=0$ for every admissible deformations. By (\ref{firstv}), $\Sigma$ is a capillary minimal surface if and only if $H \equiv 0$ and $\eta - \nu \cos \overline{\rho}$ is normal to $\partial M$.

Now assume $\Sigma$ is a capillary minimal surface and we consider the second variational formula of prescribed capillary functional:
\begin{equation}\label{secondv}
    \mathcal{A}''(0) = - \int_{\Sigma} (f \Delta f + (|A|^{2}+ \text{Ric}(N,N))f^{2}) d\mathcal{H}^{2} + \int_{\partial \Sigma} f \Big( \frac{\partial f}{\partial \eta}- Qf \Big),
\end{equation}
where $Q$ is defined to be
\begin{equation}
    Q = \frac{1}{\sin \overline{\rho}} A_{\partial M} (\nu,\nu) - \cot \overline{\rho} A(\eta,\eta) + \frac{1}{\sin^{2} \overline{\rho}} \partial_{\nu} \cos \overline{\rho}.
\end{equation}
We call a capillary minimal surface $\Sigma$ is stable if $\mathcal{A}''(0)$ is nonnegative for all admissible deformations. We refer the proof of (2.7) of \cite{CW1} for the proof of (\ref{secondv}). We can change the second variational formula for later use as in \cite{CW1} and \cite{li2020polyhedron}. By applying Lemma 2.1 in \cite{CW1}, we obtain
\begin{equation} \label{rewrite}
    Q = -H \cot \overline{\rho} + \frac{H_{\partial M}}{\sin  \overline{\rho}} - \kappa + \frac{1}{\sin^{2} \overline{\rho}} \partial_{\nu} \cos \overline{\rho},
\end{equation}
where $\kappa$ is a geodesic curvature of $\partial \Sigma$ in $\Sigma$.
\subsection{Regularity of minimizers} In this section, we collect the classical regularity result of $A^{\overline{\rho}}$-minimizers by Taylor \cite{taylor1977boundary} and De Philippis-Maggi \cite{philippis2015regularity} in more general settings. Note that the regularity result is local.
\begin{thm}[Regularity of Capillary minimal surfaces \cite{taylor1977boundary}, \cite{philippis2015regularity}] \label{regularity}
Suppose $M$ is a smooth $3$-manifold with boundary and $U$ is a relatively open set in $M$. Assume that $\Omega$ minimizes $A^{\overline{\rho}}$-functional. Then $\Sigma := \partial_{rel} \Omega$ is an embedded minimal surface in $U$, and properly embedded on the neighborhood of each point $p \in \Sigma \cap \partial M$. Moreover, $\Sigma$ intersects $\partial M$ transversally with prescribed angle $\overline{\rho}$. 
\end{thm}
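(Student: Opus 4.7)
The plan is to split the argument into interior regularity and boundary regularity, invoking established results for each part. For the interior, observe that any compactly supported variation of $\Omega$ inside $U\setminus\partial M$ does not touch the boundary term $\int_{\partial^b\Omega}\cos\bar\rho\,d\mathcal{H}^2$. Hence $\Omega$ is a local perimeter minimizer in $U\setminus\partial M$, and the classical De Giorgi--Federer regularity theory, combined with the dimension bound (the singular set of an area-minimizing Caccioppoli set in an ambient of dimension $3$ has Hausdorff dimension at most $3-8<0$, hence is empty), gives that $\Sigma=\partial_{rel}\Omega$ is a smooth embedded minimal surface in $U\cap\mathrm{int}(M)$.

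For the boundary regularity near $p\in\Sigma\cap\partial M$, the plan is to fit $\Omega$ into the framework of capillary almost-minimizers developed by Taylor and extended to general ambient manifolds by De~Philippis--Maggi. The main steps I would carry out are: (i) flatten $\partial M$ in a local chart and absorb the smooth density $\cos\bar\rho$ and the Riemannian metric into an $\omega$-minimality property for $\Omega$ with respect to the Euclidean perimeter plus a smooth contact term; (ii) establish uniform density and monotonicity estimates for such almost-minimizers, paying attention to the error terms generated by $\nabla\bar\rho$ and the ambient curvature; (iii) classify tangent cones at $p$ via blow-up — since $\bar\rho(p)\in(0,\pi)$, the limiting configuration on a half-space is a constant-angle minimizer, and by the classification of capillary tangent cones it must be a half-plane meeting the boundary at the prescribed angle $\bar\rho(p)$; (iv) apply an $\varepsilon$-regularity theorem to upgrade closeness-to-a-half-plane into a $C^{1,\alpha}$ graphical representation of $\Sigma$ up to $\partial M$; (v) bootstrap via Schauder estimates for the minimal-surface equation coupled with the capillary Robin-type boundary condition $\langle N,X\rangle_g=\cos\bar\rho$.

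The transversality and prescribed-angle statements then fall out of the first variation formula \eqref{firstv}: once smoothness up to the boundary is established, \eqref{firstv} is valid for any admissible vector field $Y$ tangent to $\partial M$ along $\partial\Sigma$. Minimality forces the boundary integrand to vanish for every such $Y$, so $\eta-\nu\cos\bar\rho$ is normal to $\partial M$ at every point of $\partial\Sigma$. A short orthogonal decomposition in the two-dimensional normal plane $\mathrm{span}\{\eta,\nu\}=\mathrm{span}\{N,X\}$ rewrites this as $\langle N,X\rangle_g=\cos\bar\rho$, i.e.\ $\rho=\bar\rho$. Because $\bar\rho$ is bounded away from $0$ and $\pi$ on the relevant part of $\partial M$ (by construction from $\cos\bar\rho=\langle X,\partial_z\rangle_{g_{Eucl}}$, with the degeneracy set confined to the extremal slices that are excluded from $\partial\Sigma$), the intersection is transverse.

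The main obstacle is step (iii) of the boundary analysis: the contact angle $\bar\rho$ is not constant, so one cannot directly invoke the classical constant-angle capillary regularity. The smooth dependence of $\bar\rho$ and the Riemannian background must be controlled through the monotonicity quantity and blow-up, which is precisely what De~Philippis--Maggi's general framework accomplishes; we therefore quote their result rather than reproducing the full $\varepsilon$-regularity argument.
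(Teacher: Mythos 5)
The paper does not prove this statement: Theorem~\ref{regularity} is quoted verbatim as a classical result of Taylor and De~Philippis--Maggi, with the section header noting only that ``we collect the classical regularity result.'' There is therefore no in-paper proof to compare your sketch against. That said, your outline is a faithful summary of the strategy in the cited references: reduction of the interior problem to perimeter minimization and invocation of the De~Giorgi--Federer theory with the $n-8$ dimension bound; boundary regularity via flattening, density and monotonicity estimates for capillary almost-minimizers, blow-up and classification of tangent cones, $\varepsilon$-regularity, and Schauder bootstrap; and finally transversality and the prescribed contact angle from the first variation formula together with $\bar\rho\in(0,\pi)$ on the relevant portion of $\partial M$. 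One small caution: your parenthetical ``excluded from $\partial\Sigma$'' regarding the degeneracy set where $\bar\rho\to 0,\pi$ is not automatic from the local regularity theorem itself; the paper addresses this separately (see the remark following Theorem~\ref{regularity}, which invokes a maximum principle to show $\partial\Sigma$ avoids the extremal point $p_+$). Since the regularity theorem is stated locally in a relatively open $U$, it is cleanest to phrase your transversality conclusion as holding wherever $\bar\rho$ is bounded away from $\{0,\pi\}$, and defer the avoidance of the degenerate locus to that separate argument.
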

\begin{rem}
We can prove that the boundary of minimizer $\partial \Sigma \subset \partial M$ does not contain $p_{+}$ in cone and smooth cases in our main Theorem \ref{comparison} by maximum principle. This enables to us to proceed the local estimate arguments in Section 3. We prove that the minimizer is nontrivial in Section 5.
\end{rem}
\section{Local comparison estimate and boundary term estimate}\label{sec:localcomparison}
In this section, we prove the local comparison estimate of boundary terms follows from the stability operator to prove the comparison and rigidity theorem. Our analysis is based on the basic computations in Appendix A. We parametrize a boundary $\partial M$ of $(M,g_{Eucl})$ by $\psi(u,v) = (x(u,v),y(u,v),v)$ where $(u,v) \in S^{1} \times [v_{-},v_{+}] $ and $x_{u}^{2} +y_{u}^{2} = \text{const}$ on level sets $\partial M \cap \{ z= v \}$ for each $v \in [v_{-},v_{+}]$. Denote $\overline{\tau}$ and $\overline{\nu}$ to be the unit tangential vector field on each level set in counterclockwise direction and the unit normal vector field in $(\partial M,g_{Eucl})$ so on Euclidean metric. The estimate in this section will be proven under the assumption of full regularity by Theorem \ref{regularity}. First, we prove an inequality which arises from mean curvature comparison.
\begin{lem} \label{mccomparison} Assume that $H^{2}g \ge H_{0}^{2} g_{Eucl}$ holds. Then for $a, b \in \mathbb{R}$ where not both $a$ and $b$ are zero and $p \in \partial M$, the following inequality holds:
\begin{equation*} 
    H(p) \ge \frac{H_{0}(p)}{a^{2}+b^{2}} \langle a \tau + b \nu, a \overline{\tau} + b \overline{\nu} \rangle_{g_{Eucl}}.
\end{equation*}
\begin{proof}
We apply the mean curvature comparison $H^{2}g \ge H_{0}^{2} g_{Eucl}$ and the decomposition into Euclidean coordinate and obtain the following inequality. Denote $w$ is a unit vector starting from $p$ which is perpendicular to $a \overline{\tau} + b \overline{\nu}$ in Euclidean metric.
\begin{align}
    \nonumber (a^{2}+b^{2}) H^{2} &= H^{2} \langle a \tau + b \nu, a \tau + b \nu \rangle_{g} \\ \label{mccappl} &\ge H_{0}^{2} \langle a \tau + b \nu, a \tau + b \nu \rangle_{g_{Eucl}} \\ \label{coorddecomp} &= H_{0}^{2} \bigg(\bigg\langle a \tau + b \nu, \frac{1}{\sqrt{a^{2}+b^{2}}}(a \overline{\tau} + b \overline{\nu}) \bigg\rangle_{g_{Eucl}}^{2} + \langle a \tau + b \nu, w \rangle_{g_{Eucl}}^{2}\bigg) \\ \nonumber &\ge \frac{H_{0}^{2}}{a^{2}+b^{2}}\langle a \tau + b \nu, a \overline{\tau} + b \overline{\nu} \rangle_{g_{Eucl}}^{2},
\end{align}
where (\ref{mccappl}) comes from the mean curvature comparison condition and (\ref{coorddecomp}) follows from the Euclidean coordinate decomposition. By taking square roots on both hand sides, we obtain the inequality.
\end{proof}
\end{lem}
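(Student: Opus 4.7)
The plan is to apply the pointwise quadratic-form inequality $H^{2}g \ge H_{0}^{2} g_{Eucl}$ directly to the tangent vector $a\tau + b\nu \in T_{p}(\partial M)$ and then bound the resulting Euclidean norm from below by a single coordinate in a well-chosen orthonormal frame.

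First I would evaluate the left-hand side of $H^{2}g(\cdot,\cdot) \ge H_{0}^{2} g_{Eucl}(\cdot,\cdot)$ on $a\tau + b\nu$. Since $\tau$ and $\nu$ are the unit tangent and unit conormal to $\partial\Sigma$ inside $(\partial M, g)$, they are $g$-orthonormal, so $|a\tau+b\nu|_{g}^{2}=a^{2}+b^{2}$ and the left side becomes $H(p)^{2}(a^{2}+b^{2})$. For the right side I would introduce the $g_{Eucl}$-orthonormal basis of $T_{p}(\partial M)$ whose first element is $\frac{1}{\sqrt{a^{2}+b^{2}}}(a\overline{\tau}+b\overline{\nu})$ and whose second element $w$ is Euclidean-perpendicular to it. Expanding $a\tau + b\nu$ in this basis and discarding the nonnegative $w$-contribution gives
\[
|a\tau+b\nu|_{g_{Eucl}}^{2} \ge \frac{1}{a^{2}+b^{2}}\bigl\langle a\tau+b\nu,\, a\overline{\tau}+b\overline{\nu}\bigr\rangle_{g_{Eucl}}^{2}.
\]
Chaining the two estimates yields the squared form
\[
H(p)^{2} \ge \frac{H_{0}(p)^{2}}{(a^{2}+b^{2})^{2}}\,\bigl\langle a\tau+b\nu,\, a\overline{\tau}+b\overline{\nu}\bigr\rangle_{g_{Eucl}}^{2}.
\]

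Finally I would take square roots. Mean convexity of $\partial M$ in $(M,g)$ together with weak convexity in $(M,g_{Eucl})$ give $H(p),\,H_{0}(p) \ge 0$, so the square root of the left side is $H(p)$ and the square root of the right side is $\tfrac{H_{0}(p)}{a^{2}+b^{2}}\bigl|\langle a\tau+b\nu,\, a\overline{\tau}+b\overline{\nu}\rangle_{g_{Eucl}}\bigr|$, which dominates the signed quantity appearing in the lemma. In the remaining case where the inner product is nonpositive, the claimed inequality is immediate from $H(p) \ge 0$. There is no genuine obstacle in the argument; the only step that requires a little care is the sign handling when passing from the squared inequality to the signed one, which is exactly why mean convexity of both metrics is needed.
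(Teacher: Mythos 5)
Your proof follows the same approach as the paper: apply the quadratic-form hypothesis to $a\tau+b\nu$, use $g$-orthonormality of $\tau,\nu$ on the left, decompose the Euclidean norm along the orthonormal pair $\frac{1}{\sqrt{a^2+b^2}}(a\overline\tau+b\overline\nu)$ and $w$, drop the $w$-term, and take square roots. You are slightly more careful than the paper about justifying the final square-root step (invoking $H, H_0 \ge 0$ and treating the nonpositive inner-product case), but this is a small refinement rather than a different route.
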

Now we prove a local comparison estimate of boundary terms arising from the second variational formula.
\begin{lem} \label{btlowerbound} The following inequality holds.
\begin{equation*} 
    ( H(p) - \nabla_{\nu} \overline{\rho} ) \ge  \langle \tau, (\nabla_{\overline{\tau}} \overline{\rho}) \overline{\nu}+(H_{0}(p)- \nabla_{\overline{\nu}}\overline{\rho}) \overline{\tau} \rangle_{g_{Eucl}}
\end{equation*}
If the equality holds at $p$, then one of the the boundary rigidity conditions holds:
\begin{align}
    \bar{H}_{\partial M}^2g_{Eucl}|_{\partial M}=H_{\partial M}^2g|_{\partial M},
\end{align}
or
\begin{align}
    \bar{H}_{\partial M}=\nabla_{\bar{\nu}}\bar{\rho}>0, \quad H_{\partial M}=(\nabla_{\nu}\bar{\rho})>0, \quad \nu\parallel \bar{\nu}.
\end{align}
\begin{proof}
Note that by Lemma \ref{lem:weak convexity} we have the following inequality. 
 \begin{align}\label{eq: weakconvexity}
 (H_{0}-\nabla_{\overline{\nu}}\overline{\rho})\nabla_{\overline{\nu}}\overline{\rho}- (\nabla_{\overline{\tau}}\overline{\rho})^{2} \ge 0,
 \end{align}

 Now we can assume that both $H_{0}(p)$ and $H_{0}(p)-\nabla_{\overline{\nu}}\overline{\rho}$ are not zero. Otherwise, the desired inequality is trivially true. Then we have the following straightforward identity by rearranging terms:
\begin{equation} \label{weight}
   W_{1} +W_{2} := \frac{(\nabla_{\overline{\tau}}\overline{\rho})^{2}+ (H_{0}(p)- \nabla_{\overline{\nu}}\overline{\rho})^{2}}{H_{0}(p)(H_{0}(p)- \nabla_{\overline{\nu}}\overline{\rho})} +\frac{(H_{0}(p)- \nabla_{\overline{\nu}}\overline{\rho})\nabla_{\overline{\nu}}\overline{\rho}- (\nabla_{\overline{\tau}}\overline{\rho})^{2}}{H_{0}(p)(H_{0}(p)- \nabla_{\overline{\nu}}\overline{\rho})} =1.
\end{equation}
We need to check that $W_{1}, W_{2} \ge 0$ where $W_{1} \ge 0$ is straightforward and we can figure out $W_{2} \ge 0$ by \eqref{eq: weakconvexity}. Now we can apply the inequality in Lemma \ref{mccomparison} and (\ref{weight}) and have
\begin{equation} \label{weightedineq}
H \ge W_{1}  \frac{H_{0}}{(H_{0}- \nabla_{\overline{\nu}}\overline{\rho})^{2}+(\nabla_{\overline{\tau}}\overline{\rho})^{2}} \langle (H_{0}- \nabla_{\overline{\nu}}\overline{\rho}) \tau + (\nabla_{\overline{\tau}}\overline{\rho}) \nu, (H_{0}- \nabla_{\overline{\nu}}\overline{\rho}) \overline{\tau} + (\nabla_{\overline{\tau}}\overline{\rho}) \overline{\nu} \rangle_{g_{Eucl}}+ W_{2} H_{0}\langle \nu, \overline{\nu} \rangle_{g_{Eucl}}
\end{equation}
at $p$. Since $\overline{\rho}$ is a function defined on $\partial M$ and in we consider this function as a function defined on pullback metric $(\partial M, g)$, we can derive the following by decomposition of $\nu$ by $\overline{\tau}$ and $\overline{\nu}$.
\begin{equation} \label{angleeucldecomp}
    \nabla_{\nu}\overline{\rho} =  \langle \nu, \overline{\tau} \rangle_{g_{Eucl}} \nabla_{\overline{\tau}}\overline{\rho} + \langle \nu, \overline{\nu} \rangle_{g_{Eucl}} \nabla_{\overline{\nu}}\overline{\rho}. 
\end{equation}
Now we obtain
\begin{align}
    \label{angleeucldecomp2} ( H - \nabla_{\nu} \overline{\rho} ) &\ge ( H - \langle \nu, (\nabla_{\overline{\tau}}\overline{\rho}) \overline{\tau} + (\nabla_{\overline{\nu}}\overline{\rho} )\overline{\nu} \rangle_{g_{Eucl}}) \\ \label{ineqappl} &\ge W_{1}  \frac{H_{0}}{(H_{0}- \nabla_{\overline{\nu}}\overline{\rho})^{2}+(\nabla_{\overline{\tau}}\overline{\rho})^{2}} \langle (H_{0}- \nabla_{\overline{\nu}}\overline{\rho}) \tau + (\nabla_{\overline{\tau}}\overline{\rho}) \nu, (H_{0}- \nabla_{\overline{\nu}}\overline{\rho}) \overline{\tau} + (\nabla_{\overline{\tau}}\overline{\rho}) \overline{\nu} \rangle_{g_{Eucl}}\\ \nonumber & + W_{2} H_{0}\langle \nu, \overline{\nu} \rangle_{g_{Eucl}} - \langle \nu, (\nabla_{\overline{\tau}}\overline{\rho}) \overline{\tau} + (\nabla_{\overline{\nu}}\overline{\rho} )\overline{\nu} \rangle_{g_{Eucl}},
\end{align}
where we obtain (\ref{angleeucldecomp2}) from (\ref{angleeucldecomp}) and (\ref{ineqappl}) from (\ref{weightedineq}). We now verify each coefficient of inner products involving $\nu$ vanishes:
\begin{equation} \label{tauterm}
W_{1}  \frac{H_{0}}{(H_{0}- \nabla_{\overline{\nu}}\overline{\rho})^{2}+(\nabla_{\overline{\tau}}\overline{\rho})^{2}} (H_{0}- \nabla_{\overline{\nu}}\overline{\rho})\nabla_{\overline{\tau}}\overline{\rho} - \nabla_{\overline{\tau}}\overline{\rho} = 0
\end{equation}
and
\begin{align} \label{nuterm}
    W_{1}  \frac{H_{0}}{(H_{0}- \nabla_{\overline{\nu}}\overline{\rho})^{2}+(\nabla_{\overline{\tau}}\overline{\rho})^{2}}(\nabla_{\overline{\tau}}\overline{\rho})^{2} + W_{2} H_{0}- \nabla_{\overline{\nu}}\overline{\rho} &= \frac{(\nabla_{\overline{\tau}}\overline{\rho})^{2}}{(H_{0}- \nabla_{\overline{\nu}}\overline{\rho})} + \frac{(H_{0}- \nabla_{\overline{\nu}}\overline{\rho})\nabla_{\overline{\nu}}\overline{\rho}- (\nabla_{\overline{\tau}}\overline{\rho})^{2}}{H_{0}- \nabla_{\overline{\nu}}\overline{\rho}}- \nabla_{\overline{\nu}}\overline{\rho}  = 0.
\end{align}
By pluging (\ref{tauterm}) and (\ref{nuterm}) into (\ref{ineqappl}) and rearranging terms, we obtained the desired inequality.

Now suppose the equality is achieved, we discuss two cases:
\begin{enumerate}
    \item $H_0(p)=0$: By Lemma \ref{lem:weak convexity} we have 
    \[
    0\leq (H_0(p)-\nabla_{\bar{\nu}}\bar{\rho})\nabla_{\bar{\nu}}\bar{\rho}-(\nabla_{\bar{\tau}}\bar{\rho})^2=-(\nabla_{\bar{\nu}}\bar{\rho})^2-(\nabla_{\bar{\tau}}\bar{\rho})^2,
    \]
    which implies that
    \[
    d\bar{\rho}(p)=0.
    \]
    Then we have
    \[
    H(p)=0.
    \]
    \item $H_0(p)\neq 0$: If $H_0(p)-\nabla_{\bar{\nu}}\bar{\rho} \neq 0$, then we have $\bar{H}_{\partial M}^2g_{Eucl}|_{\partial M}=H_{\partial M}^2g|_{\partial M}$ directly by tracing the equality case of inequalities. Otherwise suppose $H_0(p)-\nabla_{\bar{\nu}}\bar{\rho} = 0$, we obtain
    \[
    \nabla_{\bar{\tau}}\bar{\rho}=0.
    \]
    Furthermore, we have
    \[
    H^2g(\nu,\nu)\geq (\nabla_{\bar{\nu}}\bar{\rho})^2g_{Eucl}(\nu,\nu)^2\geq (\nabla_{\bar{\nu}}\bar{\rho})^2g_{Eucl}(\nu,\bar{\nu})^2=(\nabla_{\nu}\bar{\rho})^2
    \]
    when the equality is achieved, we have
    \[
    \nu\parallel \bar{\nu}.
    \]
\end{enumerate}

\end{proof}
\end{lem}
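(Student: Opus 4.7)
The plan is to combine the pointwise mean-curvature inequality from Lemma \ref{mccomparison} with the weak-convexity inequality of Lemma \ref{lem:weak convexity} via a carefully chosen convex combination, engineered so that after subtracting $\nabla_\nu \overline{\rho}$ from both sides, the contributions along $\nu$ cancel identically and only a pairing with $\tau$ survives on the right-hand side.

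First I would dispense with the degenerate cases $H_0(p) = 0$ and $H_0(p) - \nabla_{\overline{\nu}}\overline{\rho} = 0$: in the first, weak convexity forces $d\overline{\rho}(p) = 0$, and Lemma \ref{mccomparison} applied with $(a,b) = (0,1)$ then gives $H(p) \ge 0$, which makes the claimed inequality trivial; in the second, the right-hand side collapses to a one-term expression that follows from a single direct application of Lemma \ref{mccomparison}. Assuming both quantities are nonzero, I would introduce
\[
W_1 = \frac{(\nabla_{\overline{\tau}}\overline{\rho})^{2} + (H_0 - \nabla_{\overline{\nu}}\overline{\rho})^{2}}{H_0\,(H_0 - \nabla_{\overline{\nu}}\overline{\rho})}, \qquad W_2 = \frac{(H_0 - \nabla_{\overline{\nu}}\overline{\rho})\nabla_{\overline{\nu}}\overline{\rho} - (\nabla_{\overline{\tau}}\overline{\rho})^{2}}{H_0\,(H_0 - \nabla_{\overline{\nu}}\overline{\rho})},
\]
note $W_1 + W_2 = 1$ by direct expansion, observe that $W_1 \ge 0$ is immediate, and verify $W_2 \ge 0$ using exactly the weak-convexity inequality. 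I would then apply Lemma \ref{mccomparison} twice, once with $(a,b) = (H_0 - \nabla_{\overline{\nu}}\overline{\rho},\, \nabla_{\overline{\tau}}\overline{\rho})$ and once with $(a,b) = (0,1)$ (the latter yielding $H \ge H_0 \langle \nu, \overline{\nu}\rangle_{g_{Eucl}}$), and take the convex combination $W_1 \cdot (\text{first}) + W_2 \cdot (\text{second})$ to produce a lower bound for $H(p)$.

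Next I would subtract $\nabla_\nu \overline{\rho}$ from both sides, expanding via the basis decomposition
\[
\nabla_\nu \overline{\rho} = \langle \nu, \overline{\tau}\rangle_{g_{Eucl}}\, \nabla_{\overline{\tau}}\overline{\rho} + \langle \nu, \overline{\nu}\rangle_{g_{Eucl}}\, \nabla_{\overline{\nu}}\overline{\rho}.
\]
The algebraic point motivating the weight choice is that the coefficients of $\langle \nu, \overline{\tau}\rangle_{g_{Eucl}}$ and of $\langle \nu, \overline{\nu}\rangle_{g_{Eucl}}$ in the resulting lower bound both vanish identically — a short computation using the definitions of $W_1, W_2$ confirms this — leaving exactly the pairing with $\tau$ in the statement. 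For the equality discussion I would trace back through each inequality: when $H_0(p) \ne 0$ and $H_0(p) - \nabla_{\overline{\nu}}\overline{\rho} \ne 0$, equality propagates through Lemma \ref{mccomparison} and forces $H_{\partial M}^{2} g|_{\partial M} = H_0^{2} g_{Eucl}|_{\partial M}$; when $H_0(p) - \nabla_{\overline{\nu}}\overline{\rho} = 0$, the equality version of weak convexity forces $\nabla_{\overline{\tau}}\overline{\rho} = 0$, and the equality form of the residual estimate $H^{2} \ge (\nabla_{\overline{\nu}}\overline{\rho})^{2} \langle \nu, \overline{\nu}\rangle_{g_{Eucl}}^{2}$ forces $\nu \parallel \overline{\nu}$ together with $H_{\partial M} = \nabla_\nu \overline{\rho} > 0$.

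The main obstacle is guessing the convex combination $(W_1,W_2)$ that produces the cancellation; once it is in hand the rest is routine inner-product bookkeeping. A secondary point of care is checking $W_2 \ge 0$, and this is where weak convexity of the Euclidean domain enters in a load-bearing way — without it the combination ceases to be convex and the whole argument collapses, which is also why the hypothesis cannot obviously be relaxed.
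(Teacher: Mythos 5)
Your proposal reproduces the paper's argument essentially verbatim: the same convex weights $W_1,W_2$, the same two invocations of Lemma~\ref{mccomparison} with $(a,b)=(H_0-\nabla_{\overline{\nu}}\overline{\rho},\nabla_{\overline{\tau}}\overline{\rho})$ and $(0,1)$, the same decomposition of $\nabla_\nu\overline{\rho}$ in the $\{\overline{\tau},\overline{\nu}\}$ basis to cancel the $\nu$-coefficients, and the same case split for the equality discussion. The only cosmetic difference is that you spell out why the two degenerate cases $H_0(p)=0$ and $H_0(p)-\nabla_{\overline{\nu}}\overline{\rho}=0$ are trivial, where the paper merely asserts it; this is a harmless and welcome expansion, not a different route.
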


We prove a simple topological lemma based on in an Euclidean domain $(M,g_{Eucl})$ as follows.
\begin{lem} \label{windingnumber} Suppose $\gamma$ to be a simple closed curve with a parametrization with counterclockwise direction on $\partial M \cap \{v_{-} < z< v_{+} \}$ which separates $\partial M \cap \{ z = v_{-} \}$ and $\partial M \cap \{ z = v_{+} \}$. Then the following holds:
\begin{equation*}
\int_{\gamma} \arctan \bigg( \frac{y_{u}}{x_{u}}\bigg)ds = 2 \pi.
\end{equation*}
\begin{proof}
Note that the integral achieves the value of integer multiple of $2 \pi$ i.e. $2k \pi$ for some $k \in \mathbb{Z}$, so the value is discretized. We can homotope $\gamma$ to a simple closed curve which is very close to $\partial M \cap \{ z= v_{+} \}$ and obtain the integral to be $2 \pi$ by standard winding number argument. Also since the integral is preserved over homotopies by discretized values, we have the integral value to be $2 \pi$.
\end{proof}
\end{lem}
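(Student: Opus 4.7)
The plan is to interpret the displayed integral as the winding number of the nowhere-vanishing vector field $p \mapsto (x_u(p), y_u(p))$ along $\gamma$. Since the parametrization $\psi$ is regular, $(x_u, y_u) \in \mathbb{R}^2 \setminus \{0\}$ throughout $\partial M$, so its angular coordinate $\theta := \arctan(y_u/x_u)$ lifts to a continuous $\mathbb{R}$-valued function along $\gamma$, and the displayed quantity should be read as the total variation $\oint_\gamma d\theta$, which automatically takes values in $2\pi\mathbb{Z}$.

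The first step is to invoke homotopy invariance: the winding number depends only on the free homotopy class of $\gamma$ in the open cylinder $C := \partial M \cap \{v_{-} < z < v_{+}\}$, because $(x_u, y_u)$ is a continuous nonvanishing vector field on all of $\partial M$. Since $H_1(C) \cong \mathbb{Z}$ is generated by any horizontal level circle, and since $\gamma$ is assumed to be a simple closed curve in $C$ separating the two boundary components $\partial M \cap \{z = v_{-}\}$ and $\partial M \cap \{z = v_{+}\}$, it follows that $\gamma$ is freely homotopic in $C$ to a level circle $\partial M \cap \{z = v_{0}\}$ oriented counterclockwise in the sense induced by $u \in S^1$.

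It therefore suffices to compute $\oint d\theta$ on a single level circle. On $\partial M \cap \{z = v_{0}\}$ the map $u \mapsto (x(u,v_{0}), y(u,v_{0}))$ is a counterclockwise parametrization of a weakly convex simple closed curve in $\mathbb{R}^2$, and the hypothesis $x_u^2 + y_u^2 = \text{const}$ ensures that $(x_u, y_u)$ is a positive constant multiple of the unit tangent. By Hopf's Umlaufsatz, the tangent direction of a positively oriented simple closed planar curve winds exactly once around $S^1$, giving $\oint d\theta = 2\pi$, as required. The only delicate point is that the curve is assumed only weakly convex rather than strictly convex; however, the Umlaufsatz holds for any simple closed $C^1$ planar curve and is insensitive to vanishing of curvature, so no real obstacle arises here.
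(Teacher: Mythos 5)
Your proposal is correct and follows essentially the same route as the paper: interpret the (abusively written) integral as the total turning $\oint_\gamma d\theta$ of the nowhere-vanishing vector field $(x_u,y_u)$, reduce to a level circle by homotopy invariance in the open cylinder $\partial M \cap \{v_-<z<v_+\}$, and evaluate there. The only difference is that you name the final ingredient explicitly (Hopf's Umlaufsatz applied to the planar level curve, noting that weak rather than strict convexity is harmless), whereas the paper compresses this to the phrase "standard winding number argument."
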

Now we prove the estimate of an integral of boundary term over a boundary curve of surface $\Sigma$ which is a minimizer $\mathcal{I}$, assuming the existence of nontrivial minimizer which we will prove in Section 5. By the regularity theorem (Theorem \ref{regularity}) and the nontriviality assumption, we can assume that all boundary curves are simple closed curves and take one of those boundary components (See \cite{CW1} for the rotationally symmetric setting).
\begin{lem} \label{curvstimate} For a simple closed curve $\gamma$ which is a boundary component of $\Sigma$ on $(M,g)$ and separates $\partial M \cap \{ z = v_{-} \}$ and $\partial M \cap \{ z = v_{+} \}$, the following estimate holds.
\begin{equation*}
\int_{\gamma} \frac{1}{\sin \overline{\rho}} ( H- \nabla_{\nu} \overline{\rho} ) ds \ge 2 \pi.
\end{equation*}
\begin{proof}
We obtain the estimate in terms of arc length parametrization of $\gamma$ by $a$:
\begin{align*}
    \langle \gamma',(\nabla_{\bar{\tau}}\bar{\rho})\bar{\nu}&+(H_0-\nabla_{\bar{\nu}}\bar{\rho})\bar{\tau}\rangle _{g_{Eucl}}\\
    &=cu'(a)(H_0-\nabla_{\bar{\nu}}\bar{\rho})+\langle v'(a)\frac{\partial \psi}{\partial v},\RN 2(\bar{\tau},\bar{\nu})\bar{\nu}+\RN{2}(\bar{\tau},\bar{\tau})\bar{\tau}\rangle\\
    &=cu'(a)(H_0-\nabla_{\bar{\nu}}\bar{\rho})+v'(a)\langle\nabla_{\frac{\partial\psi}{\partial v}}X,\bar{\tau}\rangle\\
    &=cu'(a)(H_0-\nabla_{\bar{\nu}}\bar{\rho})-v'(a)\langle X,\nabla_{\frac{\partial\psi}{\partial v}}\bar{\tau}\rangle\\
    &=cu'(a)(H_0-\nabla_{\bar{\nu}}\bar{\rho})-v'(a)\langle X,\frac{d}{dv}\bar{\tau}\rangle\\
    &=u'(a )\sin\bar{\rho}\frac{x_{uu}y_u-y_{uu}x_u}{x_u^2+y_u^2}+v'(a)\frac{x_{uv}y_u-y_{uv}x_u}{c A}.\\
    &=u'(a )\sin\bar{\rho}\frac{x_{uu}y_u-y_{uu}x_u}{x_u^2+y_u^2}+v'(a)\sin\bar{\rho}\frac{x_{uv}y_u-y_{uv}x_u}{x_u^2+y_u^2}
\end{align*}
Where $c=\sqrt{x_u^2+y_u^2}$, $A=\sqrt{c^2+(x_uy_v-y_ux_v)^2}$, and we used the fact that $\sin\bar{\rho}=\frac{c}{A}$.

Combining Lemma \ref{btlowerbound}, Lemma \ref{windingnumber} we obtain 
\begin{align*}
    \int_{\gamma} \frac{1}{\sin \overline{\rho}} ( H - \nabla_{\nu} \overline{\rho} ) ds &\ge \int_{\gamma} \bigg( \frac{x_{uu}y_{u}-y_{uu}x_{u}}{(x_{u}^{2}+y_{u}^{2})}u'(a)  + \frac{x_{uv}y_{u}-y_{uv}x_{u}}{x_{u}^{2}+y_{u}^{2}} v'(a) \bigg) da \\ 
    &= \int_{\gamma} \bigg( \partial_{u}\bigg( \arctan \bigg( \frac{y_{u}}{x_{u}}\bigg) \bigg)u'(a) + \partial_{v}\bigg( \arctan \bigg( \frac{y_{u}}{x_{u}}\bigg) \bigg)v'(a) \bigg) da \\ 
    &= \int_{\gamma} \arctan \bigg( \frac{y_{u}}{x_{u}}\bigg)ds = 2 \pi.
\end{align*}
\end{proof}
\end{lem}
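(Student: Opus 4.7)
The plan is to combine the pointwise bound from Lemma \ref{btlowerbound} with the winding-number identity of Lemma \ref{windingnumber}. First I would apply Lemma \ref{btlowerbound} pointwise along $\gamma$ and, since $\overline{\rho}\in(0,\pi)$ so that $\sin\overline{\rho}>0$, divide through by $\sin\overline{\rho}$ to obtain
\begin{equation*}
\frac{1}{\sin\overline{\rho}}\bigl(H-\nabla_{\nu}\overline{\rho}\bigr) \;\ge\; \frac{1}{\sin\overline{\rho}}\,\bigl\langle \tau,\,(\nabla_{\overline{\tau}}\overline{\rho})\overline{\nu}+(H_{0}-\nabla_{\overline{\nu}}\overline{\rho})\overline{\tau}\bigr\rangle_{g_{Eucl}}.
\end{equation*}
The strategy is then to show that, after multiplication by $ds$, the right-hand side is \emph{exactly} the pullback of $d\bigl(\arctan(y_u/x_u)\bigr)$ along $\gamma$, so that integrating yields $2\pi$ by Lemma \ref{windingnumber}.

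To identify this exact-form structure, I would parametrize $\gamma$ by arc length $a$ and write $\tau=\gamma'(a)=u'(a)\,\partial_u\psi+v'(a)\,\partial_v\psi$ in the coordinates $\psi(u,v)=(x(u,v),y(u,v),v)$ of the boundary. The $\partial_u\psi$-component contributes $c\,u'(a)(H_0-\nabla_{\overline{\nu}}\overline{\rho})$ with $c=\sqrt{x_u^2+y_u^2}$, while the $\partial_v\psi$-component produces, via the definition of the Euclidean second fundamental form, terms of the form $\langle X,\,(d/dv)\overline{\tau}\rangle$. Using the identity $\sin\overline{\rho}=c/A$ with $A=\sqrt{c^2+(x_uy_v-y_ux_v)^2}$, and the fact that $x_u^2+y_u^2$ is constant along level sets (so that $\partial_u(\arctan(y_u/x_u))=(x_{uu}y_u-y_{uu}x_u)/(x_u^2+y_u^2)$ and similarly for $\partial_v$), a direct computation rewrites the integrand as
\begin{equation*}
u'(a)\,\partial_u\!\left(\arctan\tfrac{y_u}{x_u}\right)+v'(a)\,\partial_v\!\left(\arctan\tfrac{y_u}{x_u}\right)=\frac{d}{da}\!\left(\arctan\tfrac{y_u}{x_u}\right).
\end{equation*}

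Once this identification is achieved, integration along $\gamma$ and an application of Lemma \ref{windingnumber}, which uses that $\gamma$ separates the top and bottom caps and is therefore homotopic (on $\partial M\cap\{v_-<z<v_+\}$) to a small loop around $\partial M\cap\{z=v_+\}$ with winding number $1$, immediately yields the bound $2\pi$.

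The main obstacle is the decomposition step: one must verify that every term on the right-hand side of the Euclidean inner product in Lemma \ref{btlowerbound} matches the correct partial derivative of $\arctan(y_u/x_u)$, and in particular that the factors of $c$ and $A$ conspire, via $\sin\overline{\rho}=c/A$, to cancel the geometric weights and leave only the pure coordinate derivatives. This is where the chosen parametrization (with $x_u^2+y_u^2$ constant on level sets) and the explicit formulas for $\overline{\nu}$, $\overline{\tau}$, $X$, and the second fundamental form of $\partial M$ computed in Appendix A are used; everything else is a routine chain-rule manipulation.
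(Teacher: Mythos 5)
Your proposal is correct and follows essentially the same route as the paper: apply Lemma \ref{btlowerbound} pointwise, decompose $\tau = \gamma'(a) = u'(a)\partial_u\psi + v'(a)\partial_v\psi$ in the cylindrical parametrization, use the identities $H_0-\nabla_{\bar\nu}\bar\rho = \sin\bar\rho\,\bar k$ and $\sin\bar\rho = c/A$ (Appendix A) to recognize the $u$- and $v$-components as $\partial_u\arctan(y_u/x_u)$ and $\partial_v\arctan(y_u/x_u)$ respectively, and then integrate and invoke Lemma \ref{windingnumber}. The only cosmetic difference is that you divide by $\sin\bar\rho$ before doing the coordinate computation, whereas the paper computes the inner product first with the $\sin\bar\rho$ factor appearing explicitly; the substance is identical.
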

We analyze the stability inequality of minimizer. We follow the arguments in \cite{li2020polyhedron} and \cite{CW1}, and we obtain equality conditions which we will use for the rigidity analysis.
\begin{thm} \label{ineqproof} Suppose the conditions in Theorem \ref{comparison}. Then the following equality holds on at least one connected component $\Sigma$ of a minimizer.
\begin{align}
    \nonumber\chi(\Sigma)=1,\quad R_{M}=0,\quad \|A\|=0,\quad &K_{\Sigma}=0 \quad \text{on }\Sigma;\\
     \kappa_{\partial \Sigma} = \frac{H_{\partial M}}{\sin  \overline{\rho}} + \frac{1}{\sin^{2} \overline{\rho}} \partial_{\nu} \cos \overline{\rho}, \quad \langle X,N&\rangle_g=\cos\bar{\rho}  \quad \text{on }\partial\Sigma
\end{align}
and one of the the boundary rigidity conditions:
\begin{align}
    \bar{H}_{\partial M}^2g_{Eucl}|_{\partial M}=H_{\partial M}^2g|_{\partial M},
\end{align}
or
\begin{align}
    \bar{H}_{\partial M}=\nabla_{\bar{\nu}}\bar{\rho}>0, \quad H_{\partial M}=(\nabla_{\nu}\bar{\rho})>0, \quad \nu\parallel \bar{\nu}.
\end{align}
Moreover, we call $\Sigma$ satisfying the above rigidity conditions infinitesimally rigid.
\begin{proof} The contact angle condition is directly obtained by the first variation formula. By the second variation formula (\ref{secondv}) on the capillary minimal surface $\Sigma$, we have 
\begin{equation} \label{stabineq}
    - \int_{\Sigma} (f \Delta f + (|A|^{2}+ \text{Ric}(N,N))f^{2}) d\mathcal{H}^{2} + \int_{\partial \Sigma} f \Big( \frac{\partial f}{\partial \eta}- Qf \Big) \ge 0.
\end{equation}
By plugging a constant function $f \equiv 1$ into (\ref{stabineq}), we have
\begin{equation} \label{stabineqconst}
    - \int_{\Sigma} (|A|^{2}+ \text{Ric}(N,N)) d\mathcal{H}^{2} - \int_{\partial \Sigma} \frac{1}{\sin \overline{\rho}} A_{\partial M} (\nu,\nu) - \cot \overline{\rho} A(\eta,\eta) + \frac{1}{\sin^{2} \overline{\rho}} \partial_{\nu} \cos \overline{\rho} \ge 0.
\end{equation}
We apply Gauss equation and (\ref{rewrite}) as in \cite{li2020polyhedron} and \cite{CW1} and simplify the inequality (\ref{stabineqconst}).
\begin{equation} \label{simplifiedstabineq}
    - \int_{\Sigma} \frac{1}{2}(R_{M} - 2 K_{\Sigma}+ |A|^{2}) d\mathcal{H}^{2} - \int_{\partial \Sigma} \Big( \frac{H_{\partial M}}{\sin  \overline{\rho}} - \kappa_{\partial \Sigma} + \frac{1}{\sin^{2} \overline{\rho}} \partial_{\nu} \cos \overline{\rho} \Big) \ge 0.
\end{equation}
By applying Gauss-Bonnet formula to (\ref{simplifiedstabineq}), we obtain
\begin{equation} \label{finalineq}
     - \int_{\Sigma} \frac{1}{2}(R_{M} + |A|^{2}) d\mathcal{H}^{2} - \int_{\partial \Sigma} \frac{1}{\sin \overline{\rho}} ( H_{\partial M}- \nabla_{\nu} \overline{\rho} ) \ge - 2 \pi \chi(\Sigma) \ge -2 \pi.
\end{equation}
Then by applying Lemma \ref{curvstimate} and since $f \equiv 1$ is a Jacobi vector field of equation with Robin boundary condition, we obtain
\begin{equation*}
   \chi(\Sigma)=1,\quad R_{M}=0,\quad \|A\|=0,\quad K_{\Sigma}=0 \quad \text{on }\Sigma \text{ and } \kappa_{\partial \Sigma} = \frac{H_{\partial M}}{\sin  \overline{\rho}} + \frac{1}{\sin^{2} \overline{\rho}} \partial_{\nu} \cos \overline{\rho}\text { on } \partial \Sigma.
\end{equation*}
The boundary conditions are carried out by the proof of Lemma \ref{btlowerbound}.
\end{proof}
\end{thm}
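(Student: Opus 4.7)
The plan is to substitute the constant test function $f \equiv 1$ into the stability inequality, drive the resulting estimate through the Gauss equation, the rewriting (\ref{rewrite}) of the boundary operator $Q$, and Gauss-Bonnet, and then apply the winding-number estimate of Lemma \ref{curvstimate} to force equality throughout. The contact angle identity $\langle X, N\rangle_{g} = \cos \overline{\rho}$ on $\partial \Sigma$ falls out immediately from the first variation formula (\ref{firstv}): stationarity of $\Sigma$ against every admissible deformation $Y$ (which is tangent to $\partial M$ on $\partial \Sigma$) forces the boundary integrand $\langle Y, \eta - \nu \cos \overline{\rho}\rangle_{g}$ to vanish for all tangential $Y$, which is exactly the prescribed contact condition.

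For the interior rigidity, I would substitute $f \equiv 1$ into (\ref{secondv}), use the traced Gauss equation $|A|^{2} + \mathrm{Ric}(N,N) = \tfrac{1}{2}(R_{M} + |A|^{2}) - K_{\Sigma}$ (valid because $\Sigma$ is minimal with $H = 0$), and apply the rewriting (\ref{rewrite}) of $Q$ in which the $H \cot \overline{\rho}$ term drops. The identity $\tfrac{1}{\sin^{2}\overline{\rho}}\partial_{\nu}\cos \overline{\rho} = -\tfrac{\nabla_{\nu}\overline{\rho}}{\sin \overline{\rho}}$ then consolidates the boundary integrand into $\tfrac{1}{\sin\overline{\rho}}(H_{\partial M} - \nabla_{\nu}\overline{\rho}) - \kappa_{\partial\Sigma}$, and Gauss-Bonnet $\int_{\Sigma} K_{\Sigma}\, d\mathcal{H}^{2} + \int_{\partial \Sigma} \kappa_{\partial \Sigma}\, ds = 2\pi \chi(\Sigma)$ yields
\begin{equation*}
-\int_{\Sigma} \tfrac{1}{2}(R_{M}+|A|^{2}) \, d\mathcal{H}^{2} - \int_{\partial \Sigma} \tfrac{1}{\sin \overline{\rho}}(H_{\partial M} - \nabla_{\nu}\overline{\rho}) \, ds \ge -2\pi \chi(\Sigma).
\end{equation*}

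Next, I would select a connected component of the minimizer whose boundary contains a simple closed curve separating $\partial M \cap \{z = v_{-}\}$ from $\partial M \cap \{z = v_{+}\}$; such a component exists by the nontriviality of the minimizer proven in Section 5 combined with the regularity Theorem \ref{regularity}. Lemma \ref{curvstimate} supplies a lower bound of $2\pi$ on that separating circle, while the pointwise estimate of Lemma \ref{btlowerbound} shows that any additional non-separating boundary circles contribute nonnegatively, since their winding number in the cylinder coordinates $(u,v)$ vanishes. Together with $R_{M} \ge 0$, $|A|^{2} \ge 0$, and the topological bound $\chi(\Sigma) \le 1$ for a connected orientable surface with nonempty boundary, every inequality in the chain must be tight. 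This simultaneously forces $R_{M}=0$, $|A|=0$, $K_{\Sigma}=0$, $\chi(\Sigma)=1$, and the prescribed formula for $\kappa_{\partial \Sigma}$. The boundary dichotomy then reads off from the pointwise equality case in Lemma \ref{btlowerbound}, which produces exactly the two listed alternatives.

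The hard part will be the bookkeeping in the last step: ensuring that at least one connected component of the minimizer carries a separating boundary circle, checking that non-separating circles do not damage the winding count (topologically $\partial M$ is a sphere, so non-separating circles are null-homotopic in one cap and the associated winding integral is zero), and finally verifying that the pointwise equality in Lemma \ref{btlowerbound} really does bifurcate into only the two rigidity alternatives listed rather than introducing spurious degenerate cases.
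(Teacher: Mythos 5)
Your proposal follows essentially the same approach as the paper: plug $f\equiv 1$ into the second variation, invoke the traced Gauss equation and the rewriting of $Q$, apply Gauss--Bonnet, then combine the winding-number estimate (Lemma \ref{curvstimate}) with $R_M\geq 0$, $|A|^2\geq 0$, $\chi(\Sigma)\leq 1$ to force equality throughout. Your extra bookkeeping about non-separating boundary circles (winding number zero, hence nonnegative contribution) is a genuine clarification that the paper glosses over. The one step you leave implicit is that tightness of the stability inequality makes $f\equiv 1$ a solution of the Jacobi equation with the Robin boundary condition $\partial_\eta f = Qf$, which is what actually upgrades the integrated equalities to the pointwise conclusions $|A|^2+\mathrm{Ric}(N,N)=0$ (hence $\mathrm{Ric}(N,N)=0$ and, via Gauss, $K_\Sigma=0$) and $Q=0$ (hence the $\kappa_{\partial\Sigma}$ formula); the paper invokes this explicitly, and your phrase ``this simultaneously forces $K_\Sigma=0$...and the prescribed formula for $\kappa_{\partial\Sigma}$'' should be justified by that spectral/Euler--Lagrange argument rather than by tightness of the integral chain alone.
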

\section{Rigidity}
In this section, we construct CMC foliation near non-trivial minimizer, and we state a local splitting theorem. We refer Ye \cite{ye1991foliation}, Bray-Brendle-Neves \cite{bray2010rigidity} and Ambrozio \cite{ambrozio2015rigidity} for the construction of CMC foliations.
\subsection{Non-trivial minimizer}
Suppose there exists a non-trivial energy minimizer $\Sigma=\partial\Omega\cap \mathring{M}$, we construct a local foliation by CMC capillary surfaces near the minimizer with prescribed capillary angle.

\begin{prop}\label{prop:cmcfoliation}
    Suppose $\Sigma$ is a properly embedded, two-sided, minimal capillary surface in $M^3$, assume further that $\Sigma$ is infinitesimally rigid, then there exists $\epsilon>0$ and a function $w:\Sigma\times (-\epsilon,\epsilon)\to\mathbb R$ such that, for every $t\in(-\epsilon,\epsilon)$, the set 
    \begin{align*}
        \Sigma_t=\{\phi(x,w(x,t)):x\in\Sigma\}
    \end{align*}
    is a capillary surface with constant mean curvature $H(t)$ and it meets $\partial M$ with a contact angle $\bar{\rho}$. Moreover, for every $x\in\Sigma$ and every $t\in(-\epsilon,\epsilon)$,
    \begin{align*}
        w(x,0)=0,\quad \int_{\Sigma}(w(x,t)-t)=0,\quad \frac{\partial w}{\partial t}(x,t)=1,
    \end{align*}
where $\phi(x,t)$ is a flow of a vector field $Y$, $Y$ is defined in a neighborhood of $\Sigma$, and it is tangential when restricted to $\partial M$. By choosing a sufficiently small $\epsilon$, $\{\Sigma_t\}_{(-\epsilon,\epsilon)}$ is a foliation of a neighborhood of $\Sigma_0=\Sigma$ in $M$.
    
\end{prop}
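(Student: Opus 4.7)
The plan is to apply the implicit function theorem in Hölder spaces to a nonlinear operator whose zeros parametrize nearby capillary CMC surfaces with prescribed contact angle $\overline{\rho}$, adapting the constructions of Ye \cite{ye1991foliation}, Ambrozio \cite{ambrozio2015rigidity}, and Chai--Wang \cite{CW1} to our setting. First, I extend the unit normal $N$ of $\Sigma$ to a vector field $Y$ in a tubular neighborhood of $\Sigma$, applying a tangential correction near $\partial\Sigma$ so that $Y$ is tangent to $\partial M$ along $\partial M$; let $\phi$ be its flow. For $w\in C^{2,\alpha}(\Sigma)$ of small norm, the set $\Sigma_w=\{\phi(x,w(x)):x\in\Sigma\}$ is a $C^{2,\alpha}$ properly embedded surface with $\partial\Sigma_w\subset\partial M$. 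Writing $H(w)$ for its mean curvature and $\theta(w)$ for its contact angle with $\partial M$ (pulled back to $\Sigma$ and $\partial\Sigma$ respectively), define
\begin{equation*}
\mathcal{G}:C^{2,\alpha}(\Sigma)\times\mathbb{R}\times\mathbb{R}\longrightarrow C^{0,\alpha}(\Sigma)\times C^{1,\alpha}(\partial\Sigma)\times\mathbb{R}
\end{equation*}
by
\begin{equation*}
\mathcal{G}(w,h,t)=\Big(H(w)-h,\ \cos\theta(w)-\cos\overline{\rho},\ \textstyle\int_\Sigma(w-t)\,d\mathcal{H}^2\Big),
\end{equation*}
and note $\mathcal{G}(0,0,0)=0$ since $\Sigma$ is minimal with contact angle $\overline{\rho}$.

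The partial Fréchet derivative at $(0,0,0)$ in the variables $(u,h')\in C^{2,\alpha}(\Sigma)\times\mathbb{R}$ is, by the computations behind the second variation formula (\ref{secondv}),
\begin{equation*}
D_{(w,h)}\mathcal{G}(0,0,0)\cdot(u,h')=\Big(Lu-h',\ -\sin\overline{\rho}\,(\partial_\eta u-Qu),\ \textstyle\int_\Sigma u\,d\mathcal{H}^2\Big),
\end{equation*}
where $Lu=\Delta u+(|A|^2+\text{Ric}(N,N))u$ is the Jacobi operator and $Q$ is given by (\ref{rewrite}). The infinitesimal rigidity from Theorem \ref{ineqproof} gives $|A|\equiv 0$, $R_M=0$, and $K_\Sigma=0$ on $\Sigma$; combined with the Gauss equation this yields $\text{Ric}(N,N)\equiv 0$, so $L=\Delta$. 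On $\partial\Sigma$, substituting $H=0$ and the boundary rigidity $\kappa_{\partial\Sigma}=H_{\partial M}/\sin\overline{\rho}+\partial_\nu\cos\overline{\rho}/\sin^2\overline{\rho}$ into (\ref{rewrite}) gives $Q\equiv 0$. Thus the linearization reduces to the Neumann Laplacian coupled with the mean-value functional.

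To invert this linearization, given $(f,g,a)\in C^{0,\alpha}(\Sigma)\times C^{1,\alpha}(\partial\Sigma)\times\mathbb{R}$, I must solve $\Delta u=f+h'$ on $\Sigma$, $\partial_\eta u=-g/\sin\overline{\rho}$ on $\partial\Sigma$, and $\int_\Sigma u\,d\mathcal{H}^2=a$. The compatibility condition for the Neumann problem uniquely determines
\begin{equation*}
h'=\tfrac{1}{|\Sigma|}\Big(-\int_{\partial\Sigma}g/\sin\overline{\rho}\,-\int_\Sigma f\Big),
\end{equation*}
after which the elliptic problem is solvable with solution unique up to an additive constant, which is fixed by the volume condition. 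Standard Schauder estimates supply boundedness of the inverse, so the implicit function theorem yields $C^1$ families $w(\cdot,t)$ and $H(t):=h(t)$ with $w(\cdot,0)=0$, $H(0)=0$, satisfying $\mathcal{G}(w(\cdot,t),H(t),t)=0$ for $|t|<\epsilon$. Differentiating at $t=0$ gives $\Delta\dot w=\dot H(0)$, $\partial_\eta\dot w=0$, and $\int_\Sigma\dot w=|\Sigma|$, which force $\dot H(0)=0$ and $\dot w|_{t=0}\equiv 1$. Continuity of $\partial_t w$ then gives $\partial_t w>0$ on $\Sigma\times(-\epsilon,\epsilon)$ after possibly shrinking $\epsilon$, so $\{\Sigma_t\}$ foliates a neighborhood of $\Sigma$.

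The main technical hurdle is the careful construction of $Y$ and the verification that $w\mapsto\Sigma_w$ is a $C^1$ map into a space of admissible surfaces whose mean curvature and contact angle have precisely the linearizations asserted above — in particular the Robin-type structure of the angle operator, with its $\partial_\nu\cos\overline{\rho}$ correction term already encoded in $Q$ through (\ref{rewrite}). A subtler point is that reducing the operator pair $(L,\partial_\eta - Q)$ to $(\Delta,\partial_\eta)$ — which is what allows the kernel of the linearization to be identified as exactly the one-dimensional space of constants, rather than merely containing it — uses the full force of the rigidity conclusions of Theorem \ref{ineqproof}, both interior and boundary; only in the rigid case does the elementary Neumann Laplacian analysis apply directly.
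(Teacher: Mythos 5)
Your proposal is correct and follows essentially the same route as the paper: both set up an implicit-function-theorem argument in H\"older spaces for an operator pairing the mean curvature deficit with the contact-angle deficit, and both use the infinitesimal rigidity of $\Sigma$ to reduce the linearization to the Neumann Laplacian whose cokernel is killed by the scalar variable (you make the reduction $L\mapsto\Delta$, $Q\mapsto 0$ more explicit than the paper does, which is a helpful clarification). The only difference is bookkeeping: you carry an auxiliary scalar $h$ and the integral constraint $\int_\Sigma(w-t)\,d\mathcal{H}^2=0$ as additional target components, whereas the paper restricts the domain to the mean-zero subspace $F$ and projects $H_{t+d}$ to $G$ by subtracting its average — the same parametrization in disguise.
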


\begin{proof}
    Consider the Banach spaces
    \begin{align*}
        F=\left\{d\in C^{2,\alpha}(\Sigma)\cap C^{1,\alpha}(\bar{\Sigma}):\int_{\Sigma}d=0\right\},\quad G=\left\{d\in C^{0,\alpha}(\Sigma):\int_{\Sigma}d=0\right\}.
    \end{align*}
    Given small $\delta>0$ and $\epsilon>0$, define the map $\Psi:(-\epsilon,\epsilon)\times (B_0(\delta)\subset F)\to G\times C^{1,\alpha}(\partial\Sigma)$ as
    \begin{align*}
        \Psi(t,d)=\left(H_{t+d}-\frac{1}{|\Sigma|}\int_{\Sigma}H_{t+d},\langle N_{t+d},X_{t+d}\rangle_g-\cos\bar{\rho}(\phi(t,d))\right),
    \end{align*}
    where we use the subscript $d$ to denote the quantities associated with the surface $\Sigma_d=\{\phi(x,d(x)):x\in\Sigma\}$.

    In order to apply the inverse function theorem, we need to compute $D\Psi_d\vert_{(0,0)}$. The computations are followed from the first and second variation formula (\ref{firstv}) and (\ref{secondv}).

    \begin{align*}
        D_d\Psi\vert_{(0,0)}(0,m)=\frac{d}{ds}\vert_{s=0}\Psi(0,sm)=(-\Delta_{\Sigma}m+\frac{1}{|\Sigma|}\int_{\partial\Sigma}\frac{\partial m}{\partial\eta},-\sin\bar{\rho}\frac{\partial m}{\partial \eta}).
    \end{align*}

    It now follows from the standard elliptic theory that there exists unique solution to Laplace equation with Neumann boundary condition, and thus $D\Psi(0,0)$ is an isomorphism when restricted to $0\times F$. Then we apply the inverse function theorem and the results follow from standard arguments, see the proof of Proposition 10 in \cite{li2020polyhedron}.%
\end{proof}

Chai-Wang \cite{CW1} proved the following local splitting theorem which applies in our setting.
\begin{thm}[Chai-Wang \cite{CW1}]\label{thm:local splitting}
    For the CMC foliation $\Sigma_t$, there exists a continuous function $\Gamma(t)$ such that
    \[
    \frac{d}{dt}\left(\exp\left(-\int_0^{t}\Gamma(s)ds\right)H(t)\right)\leq 0.
    \]
\end{thm}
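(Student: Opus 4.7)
The plan is to exploit the CMC Jacobi equation satisfied by the lapse function $f_t(x) = \frac{\partial w}{\partial t}(x,t)$ of the foliation from Proposition~\ref{prop:cmcfoliation}. Since every leaf $\Sigma_t$ has constant mean curvature $H(t)$ and meets $\partial M$ at the prescribed angle $\bar{\rho}$, differentiating the mean curvature and the capillary angle conditions along the flow yields the Robin boundary value problem
\begin{equation*}
    -\Delta_{\Sigma_t} f_t - \bigl(|A_t|^2 + \mathrm{Ric}_g(N_t, N_t)\bigr) f_t = H'(t) \text{ on } \Sigma_t, \qquad \frac{\partial f_t}{\partial \eta_t} = Q_t f_t \text{ on } \partial \Sigma_t,
\end{equation*}
where $Q_t$ is the boundary operator for $\Sigma_t$ appearing in \eqref{rewrite}. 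By Proposition~\ref{prop:cmcfoliation} we have $f_0 \equiv 1$, so $f_t > 0$ for all sufficiently small $|t|$.

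I would then divide the Jacobi equation by $f_t$ and integrate over $\Sigma_t$. Using the identity $\Delta f_t / f_t = \mathrm{div}_{\Sigma_t}(\nabla f_t / f_t) + |\nabla f_t|^2 / f_t^2$, the divergence theorem, and the Robin boundary condition produces
\begin{equation*}
    H'(t)\int_{\Sigma_t}\frac{1}{f_t} = -\int_{\Sigma_t}\frac{|\nabla f_t|^2}{f_t^2} - \int_{\Sigma_t}\bigl(|A_t|^2 + \mathrm{Ric}(N_t,N_t)\bigr) - \int_{\partial \Sigma_t} Q_t.
\end{equation*}
Substituting the Gauss equation in the form $|A_t|^2 + \mathrm{Ric}(N_t,N_t) = \tfrac{1}{2}(R_g + H(t)^2 + |A_t|^2) - K_{\Sigma_t}$, discarding the nonnegative terms $|\nabla f_t|^2/f_t^2$, $R_g/2$, and $|A_t|^2/2$, and applying Gauss--Bonnet on $\Sigma_t$ gives the upper bound
\begin{equation*}
    H'(t)\int_{\Sigma_t}\frac{1}{f_t} \leq 2\pi \chi(\Sigma_t) - \int_{\partial \Sigma_t}\bigl(Q_t + \kappa_{\partial \Sigma_t}\bigr) - \frac{H(t)^2}{2}|\Sigma_t|.
\end{equation*}
By \eqref{rewrite}, the boundary integrand simplifies to $Q_t + \kappa_{\partial \Sigma_t} = -H(t)\cot\bar{\rho} + (H_{\partial M} - \nabla_\nu \bar{\rho})/\sin \bar{\rho}$.

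The final step is to apply Lemma~\ref{curvstimate} to the boundary of each leaf to conclude $\int_{\partial \Sigma_t}(H_{\partial M} - \nabla_\nu \bar{\rho})/\sin \bar{\rho} \geq 2\pi$, and to observe that $\chi(\Sigma_t) = 1$ by continuity from the infinitesimally rigid $\Sigma_0$ provided by Theorem~\ref{ineqproof}. These two facts cancel the $2\pi$ contributions and leave
\begin{equation*}
    H'(t)\int_{\Sigma_t}\frac{1}{f_t} \leq H(t)\left(\int_{\partial \Sigma_t}\cot \bar{\rho} - \frac{H(t)}{2}|\Sigma_t|\right).
\end{equation*}
Setting
\begin{equation*}
    \Gamma(t) = \left(\int_{\Sigma_t}\frac{1}{f_t}\right)^{-1}\left(\int_{\partial \Sigma_t}\cot \bar{\rho} - \frac{H(t)}{2}|\Sigma_t|\right),
\end{equation*}
which is manifestly continuous in $t$, yields $H'(t) \leq \Gamma(t) H(t)$. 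Multiplying by the positive factor $\exp(-\int_0^t \Gamma(s)\,ds)$ shows this is equivalent to the claimed monotonicity.

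The main obstacle is verifying that the structural conditions present at $t = 0$ persist uniformly along the foliation for small $|t|$: positivity of $f_t$, the topological identity $\chi(\Sigma_t) = 1$, and most importantly the geometric hypothesis of Lemma~\ref{curvstimate} that $\partial \Sigma_t$ is a simple closed curve separating the top and bottom caps of $\partial M$. All three follow from the continuity of the foliation together with the infinitesimal rigidity at $t = 0$, but one must shrink the $\epsilon$ in Proposition~\ref{prop:cmcfoliation} so that they hold simultaneously.
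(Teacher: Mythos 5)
The paper proves Theorem~\ref{thm:local splitting} by citation to Chai--Wang~\cite{CW1} rather than inline, so there is no in-text argument to compare against; nevertheless your reconstruction---linearizing the CMC-capillary system to a Robin boundary value problem for the lapse, integrating $H'(t)/f_t$ by parts, then applying the Gauss equation, Gauss--Bonnet, and the boundary estimate Lemma~\ref{curvstimate}---is exactly the standard foliation splitting argument (Bray--Brendle--Neves, Li, Chai--Wang) and is the right approach.

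The place you should be most careful is the sign of the Jacobi equation. You write $-\Delta_{\Sigma_t} f_t-(|A_t|^2+\mathrm{Ric}(N_t,N_t))f_t=H'(t)$, but differentiating $\mathcal A'(t)=-\int_{\Sigma_t}H_t f_t+\cdots$ at a minimal capillary slice, using that the boundary term vanishes there and imposing the linearized angle condition $\partial_\eta f=Qf$, and matching against (\ref{secondv}), one gets $H'(t)=+\Delta_{\Sigma_t} f_t+(|A_t|^2+\mathrm{Ric}(N_t,N_t))f_t$. With that sign your chain of inequalities reverses and produces $H'(t)\ge\Gamma(t)H(t)$, i.e.\ the opposite of the theorem's stated conclusion. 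The paper itself has a visible mismatch here: the inequality in Theorem~\ref{thm:local splitting} together with $H(0)=0$ forces $H\ge0$ for $t\le0$ and $H\le0$ for $t\ge0$, which is the reverse of what Corollary~\ref{cor:infinitesmally} asserts. So the conventions for $N$, $H$, and the lapse direction are genuinely delicate here, and you must fix a single consistent convention before committing to either inequality direction; as written, your Jacobi equation is not the one that the paper's own (\ref{firstv})--(\ref{secondv}) produce. A secondary remark: your derivation uses $R_g\ge0$, $\chi(\Sigma_t)=1$, and Lemma~\ref{curvstimate}, while the remark following the theorem advertises that Chai--Wang ``only used the fact that $\Sigma_t$ has constant mean curvature and it has the prescribed capillary angle.'' These extra inputs are available in the paper's setting and appear necessary to close the inequality into a homogeneous Gronwall form, so this is likely just imprecise phrasing in the remark (it is singling out which properties of the \emph{foliation} are used), but it is worth verifying against the cited source.
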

\begin{rem}
    To prove Theorem \ref{thm:local splitting}, Chai-Wang only used the fact that $\Sigma_t$ has constant mean curvature and it has the prescribed capillary angle $\bar{\rho}$.
\end{rem}

\begin{cor}\label{cor:infinitesmally}
    Every $\Sigma_t$ constructed in Proposition \ref{prop:cmcfoliation} is infinitesimally rigid.
\end{cor}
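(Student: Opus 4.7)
The plan is to promote each leaf $\Omega_t$ to a minimizer of $A^{\bar\rho}$ in $\mathcal E$, so that Theorem \ref{ineqproof} can be re-applied to each $\Sigma_t=\partial_{rel}\Omega_t$; the key intermediate goal is to upgrade the monotonicity of Theorem \ref{thm:local splitting} into the identity $H(t)\equiv 0$ on $(-\epsilon,\epsilon)$.

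First I would read off sign information for $H(t)$: since $\Sigma_0$ is minimal we have $H(0)=0$, so the non-increasing quantity $u(t):=\exp\bigl(-\int_0^t\Gamma(s)\,ds\bigr)H(t)$ from Theorem \ref{thm:local splitting} satisfies $H(t)\ge 0$ for $t\le 0$ and $H(t)\le 0$ for $t\ge 0$. Second, because every $\Sigma_t$ meets $\partial M$ at the prescribed angle $\bar\rho$, the boundary contribution $\langle Y,\eta_t-\nu_t\cos\bar\rho\rangle_g$ in the first variation \eqref{firstv} vanishes identically, yielding
\begin{equation*}
\frac{d}{dt}A^{\bar\rho}(\Omega_t)=-H(t)\int_{\Sigma_t}f_t\,d\mathcal H^2,\qquad f_t=\langle Y,N_t\rangle_g>0.
\end{equation*}
Combined with the sign of $H(t)$, this makes $t\mapsto A^{\bar\rho}(\Omega_t)$ non-decreasing in $|t|$ with critical value $\mathcal I$ at $t=0$; moreover $\Omega_t\in\mathcal E$ for small $t$, so the minimizer property of $\Omega_0$ also forces $A^{\bar\rho}(\Omega_t)\ge\mathcal I=A^{\bar\rho}(\Omega_0)$.

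The crucial step is to promote this into $H(t)\equiv 0$. For this I would revisit the proof of Theorem \ref{thm:local splitting}: the monotonicity identity actually expresses $u'(t)$ as a non-positive integral whose pointwise integrand assembles $|A_{\Sigma_t}|^2$, $R_M$ restricted to $\Sigma_t$, and the boundary defect controlled by Lemma \ref{btlowerbound}. At $t=0$ all three vanish by the infinitesimal rigidity of $\Sigma_0$ supplied by Theorem \ref{ineqproof}; a strong maximum principle argument on the Jacobi-type equation satisfied by the lapse function $f_t$ along the foliation then propagates this equality to every $t\in(-\epsilon,\epsilon)$, yielding $H(t)\equiv 0$ and hence $A^{\bar\rho}(\Omega_t)\equiv\mathcal I$. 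Consequently every $\Omega_t$ is itself a minimizer of $A^{\bar\rho}$ in $\mathcal E$, and re-applying Theorem \ref{ineqproof} to $\Sigma_t$ gives its infinitesimal rigidity. The main obstacle is this last propagation step: since Theorem \ref{thm:local splitting} is only stated as a monotonicity inequality, one must reopen its proof in \cite{CW1} to isolate the equality case and check that the rigidity data available at $t=0$ suffice to apply the strong maximum principle uniformly across the foliation parameter.
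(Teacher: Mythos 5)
Your sign analysis of Theorem \ref{thm:local splitting} (non-increasing $u(t)=e^{-\int_0^t\Gamma}H(t)$ with $u(0)=0$ gives $H\ge 0$ for $t\le 0$, $H\le 0$ for $t\ge 0$) is the literal reading, but combined with your further assumption $f_t=\langle Y,N_t\rangle_g>0$ it produces $\frac{d}{dt}A^{\bar\rho}(\Omega_t)\le 0$ for $t\le 0$ and $\ge 0$ for $t\ge 0$ — a local \emph{minimum} of $t\mapsto A^{\bar\rho}(\Omega_t)$ at $t=0$, which is consistent with $\Omega_0$ being a minimizer but forces nothing. You correctly notice this impasse and then propose to reopen the proof of Theorem \ref{thm:local splitting}, isolate its equality case, and propagate rigidity across the foliation by a strong maximum principle. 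This is where your proposal genuinely diverges and has a gap: you admit yourself that this propagation is the main obstacle and leave it unresolved, so the proposal is incomplete at exactly the step that matters.

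The paper's actual argument is much lighter and never touches the inside of Chai--Wang's monotonicity proof. The point is that the orientation is such that the foliated capillary energy has a \emph{local maximum} (not minimum) at $t=0$: with the sign pattern the paper quotes ($H(t)\le 0$ for $t\le 0$, $H(t)\ge 0$ for $t\ge 0$) and the first variation $F'(t)=-\int_{\Sigma_t}H(t)f_t\,d\mathcal H^2$ (with $f_t>0$), one gets $F'\ge 0$ for $t\le 0$ and $F'\le 0$ for $t\ge 0$. Since $\Omega_0$ is a global minimizer ($F(t)\ge F(0)=\mathcal I$ for all admissible $t$) and simultaneously a local maximum along the foliation ($F(t)\le F(0)$), $F$ is constant on $(-\epsilon,\epsilon)$, forcing $H(t)\equiv 0$ and making each $\Omega_t$ itself a minimizer of $A^{\bar\rho}$. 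Infinitesimal rigidity of each $\Sigma_t$ then follows by applying Theorem \ref{ineqproof} to each $\Sigma_t$ — the same final step you propose. So the resolution you need is not a maximum principle but a careful check of the orientation convention: either the flow $Y$ in Proposition \ref{prop:cmcfoliation} points opposite to $N$ (so $f_t<0$), or the sign of $H$ used in Theorem \ref{thm:local splitting} is the negative of the $H$ appearing in the first variation \eqref{firstv}. Either way, once the signs align, the monotonicity and minimality \emph{directly} force $F$ to be constant, and the argument closes with no additional analytic input.
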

\begin{proof}
    Let 
    \[
    F(t)=|\Sigma_t|-\int_{\partial^{b}E_t}\cos\bar{\rho}.
    \]
    By the first variation formula, we have
    \[
    F(t_2)-F(t_1)=\int_{t_1}^{t_2}\left(\int_{\Sigma_t}H(t)\, da\right)dt,
    \]
    for any $-\epsilon<t_1<t_2<\epsilon$.
    
    As a direct corollary of Theorem \ref{thm:local splitting}, we know that $H(t)$ is non-positive when $t\leq 0$, and it is non-negative when $t\geq 0$. 

    Since $\Sigma=\Sigma_0$ is a non-trivial minimizer, we have 
    \[
    H(t)\equiv 0,\qquad \forall t\in(-\epsilon,\epsilon),
    \]
    and hence every $\Sigma_t$ constructed in Proposition \ref{prop:cmcfoliation} is infinitesimally rigid.
\end{proof}

\section{Proof of the Main Theorem}
We divide the proof of the main theorem (Theorem \ref{comparison}) by three types of domains: generalized prism type, conical domain with a trivial minimizer, smooth domain cases. In the previous section, we proved the local splitting theorem in the rigidity case after assuming the existence of the nontrivial minimizer. In the cases of conical and smooth domains, we construct a barrier to guarantee the existence of nontrivial minimizer. In both cases, the barrier of the Riemannian domain is a foliation near a single vertex on the boundary $\partial M$ where the leaves have a non-negative mean curvature and a larger contact angle than the one in the Euclidean domain. This ensures an existence of nontrivial minimizer of the functional $A^{\overline{\rho}}$ in Riemannian domain, which follows from maximum principle. While our construction of foliations is inspired by the work of Chai-Wang \cite{CW1}, we emphasize the technical part of our construction for general classes of domains in this section.
\subsection{Generalized prism type}

\begin{proof}[Proof of the Main Theorem: generalized prism] 

For the generalized prism type, the minimizer of the capillary energy functional is non-trivial. Suppose there exists a non-trivial minimizer $\Sigma$ of the capillary energy functional, then $\Sigma_0$ is infinitesimally rigid and by Proposition \ref{prop:cmcfoliation}, there exists a CMC foliation $\{\Sigma_t\}_{(-\epsilon,\epsilon)}$ with prescribed contact angle $\bar{\rho}$.

By the local splitting Theorem \ref{thm:local splitting} and standard ODE arguments which were also used in Li \cite{li2020polyhedron}, Chai-Wang \cite{CW1}, we conclude that the CMC foliation are all infinitesimally rigid and by connectedness of $M$, this is a global splitting thus $g$ is Euclidean.

\end{proof}

\subsection{Rigidity case of conical domains}
We now construct a CMC foliation near the conical point under the trivial minimizer assumption.

\begin{lem} \label{nontrivialcone}
    Suppose $M$ is a compact manifold with boundary satisfying assumptions in Theorem \ref{comparison}, $p$ is the conical vertex, then if there exists $z_i\in T_pM$ $i=1,2$, such that
    \begin{align}\label{strictconeangle}
        \frac{\langle z_1,z_2\rangle_g}{|z_1|_g\cdot|z_2|_g}>\frac{\langle z_1,z_2\rangle_{g_{Eucl}}}{|z_1|_{g_{Eucl}}\cdot |z_2|_{g_{Eucl}}}
    \end{align}
    Then the minimizer of \eqref{eq:minimization} is non-trivial.
\end{lem}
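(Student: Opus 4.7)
The plan is to exhibit a competitor $\Omega_{*} \in \mathcal{E}$ with $A^{\bar{\rho}}(\Omega_{*}) < 0$ strictly less than the capillary energy of every trivial (surface-free) competitor in $\mathcal{E}$. Combined with standard BV compactness and lower semi-continuity for the capillary functional, this forces any minimizer to have nonempty interior surface $\Sigma$: the trivial candidates that shrink to the endpoint $p_{\pm}$ yield capillary energy tending to $0$, so it is enough to produce a competitor of strictly negative energy.

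To construct such a competitor I would localize near the conical vertex $p$ and blow up. Both $g$ and $g_{\mathrm{Eucl}}$ are asymptotically conical at $p$, with tangent cones $(T_{p}M, g_{p})$ and $(T_{p}M, g_{p}^{\mathrm{Eucl}})$ on the same topological cone. For a fixed $\Omega \subset T_{p}M$, the rescaled domain $\Omega_{\epsilon} := \exp_{p}(\epsilon \cdot \Omega) \subset M$ satisfies the scaling
\[
A^{\bar{\rho}}_{g}(\Omega_{\epsilon}) \,=\, \epsilon^{2}\, A^{\bar{\rho}}_{g_{p}}(\Omega) \,+\, o(\epsilon^{2})
\]
as $\epsilon \to 0$, since the metric is conical to leading order and the prescribed-angle boundary term is homogeneous of degree $2$. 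It therefore suffices to find $\Omega \subset T_{p}M$ with $A^{\bar{\rho}}_{g_{p}}(\Omega) < 0$ and rescale.

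On the Euclidean tangent cone, applying the divergence theorem to the divergence-free field $\partial_{z}$ (tangent to the Euclidean $\partial M$) gives
\[
A^{\bar{\rho}}_{g_{p}^{\mathrm{Eucl}}}(\Omega) \,=\, \int_{\Sigma}\bigl(1-\langle N, \partial_{z}\rangle_{\mathrm{Eucl}}\bigr)\, d\mathcal{H}^{2}_{\mathrm{Eucl}} \,\ge\, 0,
\]
with equality exactly on horizontal slabs. The strict conical comparison \eqref{strictconeangle} says the angular metric of $g_{p}$ on the link of $p$ is pointwise dominated by the Euclidean angular metric, with strict inequality at the pair $(z_{1}, z_{2})$, hence on an open set of directions by continuity. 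This forces the Riemannian cross-sectional area of any transverse slice in $T_{p}M$ to be strictly smaller than its Euclidean counterpart by an amount controlled by the angular defect. I would then take $\Omega$ to be a thin slab near $p$ tilted by a small parameter $\tau$ in the $2$-plane spanned by $z_{1}, z_{2}$. Taylor expanding $A^{\bar{\rho}}_{g_{p}}$ around the horizontal slab, the Euclidean baseline is $O(\tau^{2})$, while the Riemannian area defect contributes a first-order negative term of size comparable to $\tau$ times the angular defect. For $\tau$ sufficiently small this gives $A^{\bar{\rho}}_{g_{p}}(\Omega) < 0$, and rescaling by $\epsilon \to 0$ transfers this strict negativity to $M$.

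The hard part will be carrying out the first-order expansion of $A^{\bar{\rho}}_{g_{p}}$ around a horizontal slice and identifying the linear term in $\tau$ with the angular defect from \eqref{strictconeangle}: one must carefully track both the Riemannian area form on the tilted slice and the Euclidean boundary flux $\int \cos \bar{\rho}$ on the wedge of $\partial M$ swept out by the tilt, and verify that the sign of the linear coefficient is dictated precisely by \eqref{strictconeangle}. A convenient alternative, avoiding the tangent-cone reduction, is to perturb a horizontal Euclidean slice directly in $M$ near $p$ by a small graph $u$ in normal coordinates, compute the first variation of $A^{\bar{\rho}}_{g}$ at $u \equiv 0$, and use \eqref{strictconeangle} to pick a test direction along which the linearized energy is strictly negative.
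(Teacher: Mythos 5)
Your high-level strategy matches the paper's: blow up at the conical vertex, pass to the tangent cone, and exhibit a competitor with strictly negative capillary energy, which forces the minimizer on $M$ to be non-trivial. The paper sets this up as a contradiction via the observation that $A^{\bar\rho}$ is exactly scaling-invariant near the vertex (since $\bar\rho(u,v)=\bar\rho(u)$ there), so triviality of the minimizer on $M$ passes to triviality on $T_pM$ under $\mu_r$; your $\epsilon^2$-scaling argument is an equivalent repackaging.

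Where the proposal diverges from the paper, and where it has a real gap, is the mechanism for producing a negative-energy competitor in the tangent cone. The paper uses a \emph{fixed} horizontal plane $\pi$: on the Euclidean cone model $C$, the divergence theorem applied to $\partial_z$ gives
\begin{align*}
\mathcal{H}^2(\pi\cap\partial\Omega_\infty)-\int_{\partial C\cap\partial\Omega_\infty}\cos\bar\rho=0,
\end{align*}
and then the strict angle comparison forces $\cos\bar\rho(\psi(u,v))\le\cos\rho(u,v)$ with strict inequality on an open $u$-set, so replacing $\bar\rho$ by the actual angle $\rho$ between $\pi$ and the (narrower) tangent cone of $M$ makes the quantity strictly negative. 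No tilting, no Taylor expansion, no cancellation of orders in $\tau$ is needed: the defect already shows up at ``$\tau=0$'' and is of order one in the angular gap. Your proposal instead tilts $\pi$ by a small $\tau$ and claims that the Euclidean term is $O(\tau^2)$ while ``the Riemannian area defect contributes a first-order negative term of size comparable to $\tau$ times the angular defect.'' That claim is the crux of your argument, you flag it yourself as the hard part, and it is not justified; in fact it looks wrong in scaling, since the mismatch between $(T_pM,g_p)$ and the Euclidean model is already present for the untilted horizontal slab and should contribute at order $\tau^0$, not $\tau^1$. If the $\tau^0$ contribution has the wrong sign (which you have not ruled out), a linear term in $\tau$ cannot rescue you; if it has the right sign, the tilt is unnecessary. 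Either way the tilt-and-expand route does not close the argument. You should drop the tilting, take the horizontal plane, and derive the sign directly from the angle comparison as the paper does, being careful to record exactly which cone (wide Euclidean model vs.\ tangent cone of $M$) each area and each $\cos$ is computed on.

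Your proposed ``convenient alternative'' (linearize $A^{\bar\rho}_g$ around the Euclidean horizontal slice in $M$ itself and find a descent direction from \eqref{strictconeangle}) has the same issue: a non-vanishing first variation at a fixed surface only shows the energy can be locally decreased, not that it drops below the value $0$ attained in the limit by the trivial competitors, so on its own it does not certify a strictly negative infimum.
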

\begin{proof}
    Assume for contradiction that for any $\Omega\subset M$,
    \begin{align}\label{eq:conecontradiction}
      A^{\bar{\rho}}(\Omega)=|\partial^i\Omega|-\int_{\partial^b\Omega}\cos\bar{\rho}d\mathcal{H}^2\geq 0.
    \end{align}
    Since we are discussing the generalized cone case, $\bar{\rho}(u,v)=\bar{\rho}(u)$ near the conical point, i.e. the inequality \eqref{eq:conecontradiction} is scaling invariant. More precisely, if $\Omega$ satisfies \eqref{eq:conecontradiction}, then under the homothety $\mu_r: x\to r(x-p)$, the set $(\mu_r)_{\#}(\Omega)\subset (\mu_r)_{\#}(M)$ satisfies \eqref{eq:conecontradiction}. Letting $r\to\infty$, the tangent cone $T_pM$ shares the same property.

    By the assumption of the angle comparison, the tangent cone of $M$ at $p$ can be placed inside the tangent cone of the Euclidean model. Since there exists $z_i\in T_pM$, such that 
    \[
      \frac{\langle z_1,z_2\rangle_g}{|z_1|_g\cdot|z_2|_g}>\frac{\langle z_1,z_2\rangle_{g_{Eucl}}}{|z_1|_{g_{Eucl}}\cdot |z_2|_{g_{Eucl}}},
    \]
    $T_pM$ is strictly contained in the Euclidean cone for some non-empty set $U$. Without loss of generality, we can write the Euclidean cone model as $\psi(u,v)=(vx(u),vx(u),v)$ for $u\in[0,2\pi]$, and $W=\{\psi(u,v):u\in[0,\theta_0]\}\subseteq U$. Then there exists a plane $\pi\in\mathbb R^3$, such that $\pi$ meets the boundary of Euclidean cone model at $\bar{\rho}(\psi(u,v))$, and it meets the boundary of the tangent cone of $M$ at $p$ with angle $\rho(\psi(u,v))$, where
    \[
    \cos\bar{\rho}(\psi(u,v))\leq \cos\rho(u,v)
    \]
    and the inequality is strict when $u\in[0,\theta_0]$.

    Denote $\Omega_{\infty}$ the open domain enclosed by $\pi$ and the boundary of the Euclidean cone model $C$, then since we are using the Euclidean metric, by the area formula, we have that
    \[
    \mathcal{H}^2(\pi\cap\partial \Omega_{\infty})-\int_{\partial C\cap\partial\Omega_{\infty}}\cos\bar{\rho}=0.
    \]
    Since $\mathcal{H}^1(W\cap \partial C\cap \partial\Omega_{\infty})\neq 0$, we conclude that
    \[
    \mathcal{H}^2(\pi\cap\partial\Omega_{\infty})-\int_{\partial C\cap\partial\Omega_{\infty}}\cos\rho<0,
    \]
    contradiction achieved.
\end{proof}

As a direct corollary, if the minimizer is trivial, then at the conical point, we must have 
\begin{align}\label{eq: eqanglevertex}
\frac{\langle z_1,z_2\rangle_g}{|z_1|_g\cdot |z_2|_g}=\frac{\langle z_1,z_2\rangle_{g_{Eucl}}}{|z_1|_{g_{Eucl}}\cdot |z_2|_{g_{Eucl}}},\quad \forall z_1,z_2\in T_pM.
\end{align}

Now, under the equal angle vertex assumption \eqref{eq: eqanglevertex}, we construct a CMC foliation near the vertex $p$.

\begin{thm}\label{thm:CMCnearvertex}
    Let $(M^3,g)$ be a compact manifold with boundary diffeomorphic to a weakly convex domain sphere $M'$ in $\mathbb R^3$ but at $p$ which is a conical vertex. Assume further that $(T_pM,g)$ is isometric to $(C(p'), g_0)$, where $C(p')$ is the tangent cone of $M'$ at $p'$. Then there exists a small neighborhood $U$ of $p$ in $M$, such that $U$ is foliated by $\{\Sigma_{h}\}_{h\in (0,\epsilon)}$ such that:
    \begin{enumerate}
        \item for each $h\in(0,\epsilon)$, $\Sigma_h$ meets $\partial M$ at the prescribed angle $\bar{\rho}$;
        \item each $\Sigma_h$ has constant mean curvature $\lambda_h$, and $\lim_{h\to 0}\lambda_h=0$.
    \end{enumerate}
\end{thm}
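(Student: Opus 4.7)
The plan is to construct the foliation $\{\Sigma_h\}$ as small perturbations of the Euclidean horizontal slices in the tangent cone, exploiting the matching-tangent-cone hypothesis \eqref{eq: eqanglevertex}. First I would fix an isometry identifying $(T_pM,g)$ with $(C(p'),g_0)$, and choose coordinates $(x,y,z)$ in a small neighborhood $U$ of $p$ in which the metric $g$ agrees with $g_{Eucl}$ to leading order at the vertex, with remainder $O(r^{\alpha})$ for some $\alpha>0$. In these coordinates the Euclidean model is foliated by the flat disks $D_h=\{z=h\}\cap C(p')$, each of which has vanishing mean curvature and meets $\partial C(p')$ at precisely the prescribed angle $\bar{\rho}$ (the latter being, by construction in Section 2, the Euclidean angle between horizontal planes and $\partial M'$).

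Next, for each small $h>0$ I would look for $\Sigma_h$ as a graph $\{z=h+u_h(x,y)\}$ over $D_h$, where the pair $(u_h,\lambda_h)$ must satisfy a coupled system: the CMC equation with constant $\lambda_h$ in the interior and a capillary (Robin-type) condition $\langle N,X\rangle_g=\cos\bar{\rho}$ on $\partial M\cap U$. To handle the singular point uniformly, rescale $\tilde u_h(x,y)=h^{-1}u_h(hx,hy)$, which pushes everything to a fixed cross-section of the cone; the rescaled metric $g_h:=h^{-2}\mu_{1/h}^{\,*}g$ converges to $g_{Eucl}$ on $C(p')$ in an appropriate Hölder norm as $h\to 0$ by the matching tangent cone assumption. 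At $h=0$ the problem has the trivial solution $(\tilde u,\lambda)=(0,0)$.

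The crux is then an inverse function theorem argument at $(h,\tilde u,\lambda)=(0,0,0)$, closely parallel to Proposition \ref{prop:cmcfoliation}. Working on the Banach spaces
\[
F=\Bigl\{d\in C^{2,\alpha}(D_1)\cap C^{1,\alpha}(\bar D_1):\int_{D_1}d=0\Bigr\},\qquad G=\Bigl\{d\in C^{0,\alpha}(D_1):\int_{D_1}d=0\Bigr\},
\]
(with the normalization $\int\tilde u=0$ killing the $z$-translation degree of freedom), the linearization of the map
\[
\Psi(h,\tilde u,\lambda)=\Bigl(H_{\tilde u}^{g_h}-\lambda-\tfrac{1}{|D_1|}\!\!\int_{D_1}(H_{\tilde u}^{g_h}-\lambda),\ \langle N,X\rangle_{g_h}-\cos\bar\rho\Bigr)
\]
at the trivial solution is the Neumann Laplacian on $D_1$ with a coupling to $\lambda$, which by standard elliptic theory is an isomorphism of $F\times\mathbb R$ onto $G\times C^{1,\alpha}(\partial D_1)$. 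Inverse function theorem then produces $(\tilde u_h,\lambda_h)$ for all small $h$, with $\lambda_h\to 0$ by continuity at $h=0$; unscaling gives the leaves $\Sigma_h$ with prescribed angle and CMC $\lambda_h$, and the relation $\partial_h(h+u_h)>0$ in the original scale (an easy consequence of $\|\tilde u_h\|_{C^1}\to 0$) shows that the $\Sigma_h$ genuinely foliate a punctured neighborhood of $p$.

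The main obstacle is regularity up to the conical point: standard Hölder theory on smooth domains does not apply at $p$, so the implicit function theorem must be carried out on weighted (conical) Hölder spaces, and one must verify that the rescaled background metrics $g_h$ converge uniformly on $D_1$ in those norms, which is precisely where \eqref{eq: eqanglevertex} is used. Verifying this uniform convergence, and with it the uniform invertibility of the linearization, is the only nontrivial ingredient; once that is in place the construction, the asymptotic $\lambda_h\to 0$, and the foliation property follow by the same soft arguments as in Proposition \ref{prop:cmcfoliation}.
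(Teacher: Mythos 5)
Your strategy is essentially the paper's: rescale to a fixed reference cross-section of the cone, write the coupled CMC-plus-capillary-angle system as a map $\Psi$ on a mean-zero Banach space, linearize to a Neumann Laplacian, and apply the inverse function theorem, recovering $\lambda_h\to 0$ by continuity in $h$. Two technical points where your write-up deviates from what actually goes through. First, the parametrization by vertical graphs $\{z=h+u_h(x,y)\}$ over the fixed disk $D_h$ does not automatically keep $\partial\Sigma_h$ on $\partial M$: since $\partial C(p')$ is a cone (not a vertical cylinder), changing the $z$-coordinate of a boundary point without also changing $(x,y)$ pushes it off $\partial M$, which means you cannot freely prescribe the capillary angle. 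The paper avoids this by using a radial flow $\phi(x,t)$ generated by a vector field $Y$ parallel to $x-p$, which is automatically tangent to $\partial C(p)$, so every perturbed surface $\Sigma_{h,d}=\{\phi(hx,d(hx)):x\in\Sigma_1\}$ has boundary on $\partial M$ by construction. Second, your concern that the IFT must be run on weighted conical Hölder spaces is unnecessary: once rescaled, the problem lives on the fixed smooth surface-with-boundary $\Sigma_1$ (the cross-section at height $1$), which is bounded away from the vertex $p$; the singularity enters only through the $h$-dependence of the rescaled metric (equivalently through the coefficients in the Taylor expansion of Lemma \ref{lem: Taylorexpansion}, whose uniform boundedness as $h\to 0$ is exactly where the tangent-cone identification \eqref{eq: eqanglevertex} is used), so ordinary $C^{2,\alpha}$ spaces on $\Sigma_1$ suffice. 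With these two adjustments your argument coincides with the proof of Theorem \ref{thm:CMCnearvertex}.
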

\begin{rem}
    Our proof follows Li's proof of Theorem 4.3 in \cite{li2020polyhedron} with minor modification.
\end{rem}

\begin{proof}
    Let $\pi$ be a plane in $\mathbb R^3$ such that $\pi$ meets $C(p')$ at the prescribed angle $\bar{\rho}$. For $h\in (0,1]$, let $\pi_h$ be the plane that is parallel to $\pi$ and the distance between $\pi_h$ and $p'$ is $h$.

    Denote $C(p)$ the tangent cone of $M$ at $p$, $X$ the outward pointing unit vector field of $\partial C(p)$, $N_h$ the unit normal vector field of $\Sigma_h$ pointing toward $p$, $Y(x)$ a vector field such that $Y(x)$ is parallel to $(x-p)$ $\forall x\in\Sigma_h$ and we require the flow of $Y$, denoted as $\phi(x,t)$, parallel translates $\{\Sigma_h\}$. For $x\in \partial C(p)$, $Y(x)$ is tangential to $\partial C(p)$ by the definition of a cone.

    For $d\in C^{2,\alpha}(\Sigma_1)\cap C^{1,\alpha}(\bar{\Sigma}_1)$, we define the perturbed surface:
    \[
    \Sigma_{h,d}=\{\phi(hx,d(hx)):x\in\Sigma_1\}.
    \]
     $\Sigma_{h,d}$ is a small perturbation of $\Sigma_h$ provided $|d|$ is small.

    We use the $h$ subscript to denote the geometric quantities related to $\Sigma_h$ and $(h,d)$ subscript to denote the geometric quantities related to $\Sigma_{h,d}$, both with respect to the $g$ metric.

    \begin{lem}[Ambrozio \cite{ambrozio2015rigidity}, Li, \cite{li2020polyhedron}]\label{lem: Taylorexpansion}
    We have the following Taylor expansion of the geometric quantities:
    \begin{align}
         H_{h,d}=H_h+\frac{1}{h^2}\Delta_hd+(\text{Ric}(N_h,N_h)+|A_h|^2)d+L_1d+Q_1(d)
    \end{align}
    and
    \begin{align}
        \nonumber\langle X_{h,d},N_{h,d}\rangle&=\langle X_h,N_h\rangle -\frac{\sin\bar{\rho}(\phi(hx,0))}{h}\frac{\partial d}{\partial \eta_h}\\
        &+(\cos\bar{\rho}(\phi(hx,0))A(\eta_h,\eta_h)+\RN{2}(\nu_h,\nu_h))d+L_2d+Q_2(d).
    \end{align}
        Where $Q_1,Q_2$ are at least quadratic in $d$, $L_1,L_2$ are uniformly bounded.
    \end{lem}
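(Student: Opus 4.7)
The plan is to combine the classical first-variation formulas for mean curvature and contact angle with the chain rule for the homothetic rescaling $x \mapsto h x$ that defines $\Sigma_h = h \cdot \Sigma_1$. The computation is essentially a scaled version of Proposition 10 of Li \cite{li2020polyhedron} and the corresponding computation of Ambrozio \cite{ambrozio2015rigidity}; our setting differs only in that $\bar\rho$ varies with position along $\partial M$, which produces additional linear corrections absorbed into $L_1, L_2$.

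First I would view $\Sigma_{h,d}$ as the normal graph over $\Sigma_h$ of the function $\tilde d$ defined by $\tilde d(hx) = d(x)$, transported along the radial flow $\phi(\cdot,t)$ of $Y$. Since $Y$ is parallel to $x - p$, its component along the unit normal $N_h$ is uniformly bounded and its tangential component only reparametrizes $\Sigma_h$; thus the effective normal displacement is $\tilde d$ up to a smooth multiplicative factor that will be absorbed into $L_1 d$. Applying the classical identity
\[
\frac{d}{ds}\bigg|_{s=0} H_{sd} \;=\; \Delta_\Sigma d + (|A|^2 + \mathrm{Ric}(N,N))\, d
\]
along $\Sigma_h$ and then using the chain rule $\Delta_{\Sigma_h} = h^{-2}\Delta_{\Sigma_1} + O(h^{-1})$ under the dilation $x \mapsto hx$ produces the $h^{-2}$ prefactor; here $\Delta_h$ denotes the Laplacian on $\Sigma_1$ with respect to the pulled-back metric. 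All subleading linear terms, including the discrepancy between the radial flow and a genuine normal perturbation, are collected into $L_1 d$, and Taylor's theorem yields the quadratic remainder $Q_1(d)$.

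For the boundary identity I would apply the analogous first-variation formula for $\langle X, N\rangle$ under a normal perturbation (compare Lemma 2.1 of \cite{CW1}),
\[
\frac{d}{ds}\bigg|_{s=0}\langle X, N\rangle_{sd} \;=\; -\sin\rho\,\frac{\partial d}{\partial \eta} + \bigl(\cos\rho\, A(\eta,\eta) + \RN{2}(\nu,\nu)\bigr)\, d,
\]
evaluated on $\Sigma_h$. The factor $1/h$ in front of $\partial d/\partial \eta_h$ again emerges from the chain rule for the boundary normal derivative under the rescaling. The variation of $\bar\rho(\phi(hx,t))$ with respect to $t$ contributes an extra linear piece bounded in terms of $\|\bar\rho\|_{C^1}$, which is incorporated into $L_2 d$, and the remaining higher-order terms are collected into $Q_2(d)$.

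The main obstacle is verifying that the remainder operators $L_1, L_2$ and the quadratic maps $Q_1, Q_2$ are uniformly bounded as $h \to 0$, since the rescaling concentrates near the possibly singular vertex $p$ and, a priori, the second fundamental form of $\partial M$ and the curvature of $g$ could blow up. The hypothesis $(T_pM, g)\cong (C(p'), g_0)$ is essential here: it implies that the rescaled metrics $h^{-2}\mu_h^* g$ converge in $C^k_{\mathrm{loc}}$ on compact subsets of $C(p)\setminus\{p\}$ to the flat cone metric, so every extrinsic and intrinsic geometric quantity on $\Sigma_h$ and on $\partial M$ near $\partial \Sigma_h$ entering $L_1, L_2, Q_1, Q_2$ remains uniformly controlled. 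This uniform control finishes the Taylor expansions.
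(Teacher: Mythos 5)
Your sketch correctly reconstructs the argument: the paper does not give its own proof of this lemma but cites Ambrozio and Li, and your derivation—linearizing the mean curvature and contact-angle conditions via the classical first-variation formulas, extracting the $h^{-2}$ and $h^{-1}$ prefactors from the dilation $x \mapsto hx$, absorbing the non-normal part of the radial flow $Y$ and the $h$-dependence of $\bar\rho$ into $L_1$ and $L_2$, and invoking the tangent-cone isometry hypothesis to keep everything uniformly controlled as $h\to 0$—is exactly the route those references take. The one point worth being slightly more careful about is the assertion that the tangent-cone isometry alone gives $C^k_{\mathrm{loc}}$ convergence of the rescaled metrics: this requires (and the paper's standing hypotheses supply) smoothness of $g$ away from $p$, so that the blow-ups converge in $C^k$ on annuli, not merely in a weaker sense.
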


    Denote $D_h=\langle X_h,N_h\rangle-\cos\bar{\rho}(\phi(hx,0))$, with Lemma \ref{lem: Taylorexpansion}, we deduce the prescribed constant mean curvature $\lambda$ surface equation as:
    \begin{equation}
        \begin{cases}
            \Delta_hd+d^2L_1d+h^2Q_1(d)=h^2(\lambda-H_h) & \text{ in } \Sigma_1\\
            \frac{\partial d}{\partial\eta_h}=hD_h+hL_2d+hQ_2(d) & \text{ on } \partial \Sigma_1.
        \end{cases}
    \end{equation}
    Similarly as in Proposition \ref{prop:cmcfoliation}, we consider the Banach spaces
    \begin{align*}
        F=\left\{d\in C^{2,\alpha}(\Sigma_1)\cap C^{1,\alpha}(\bar{\Sigma}_1):\int_{\Sigma_1}d=0\right\},\quad G=\left\{d\in C^{0,\alpha}(\Sigma_1):\int_{\Sigma_1}d=0\right\}.
    \end{align*}
    For sufficiently small $\delta>0$, define $\Psi(-\epsilon,\epsilon)\times (B_\delta(0)\subset F)\to G\times C^{1,\alpha}(\partial\Sigma_1)$ as
    \begin{align*}
        \Psi(h,d)=\left(\mathcal{L}_h(d)-\frac{1}{|\Sigma_1|}\int_{\Sigma_1}\mathcal{L}_h(d)d\mathcal{H}^2,\mathcal{B}_h(d)\right),
    \end{align*}
    where $\mathcal{L}_h$ and $\mathcal{B}_h$ are operators defined as
     \begin{align*}
        \begin{cases}
            \mathcal{L}_h(d)=\Delta_hd-h^2L_1d-h^2Q_1(d)+h^2H_h,\\
            \mathcal{B}_h(d)=\frac{\partial d}{\partial\eta_h}-hD_h-hL_2d-hQ_2(d).
        \end{cases}
    \end{align*}
    Note that $\Delta_h$ converges to the Laplacian operator on $\mathbb{R}^2$ as $h\to 0$, the linearized operator $D_d\Psi\vert_{(0,0)}$ when restricted to $\{0\}\times F$, is given by 
    \[
    D_d\Psi\vert_{(0,0)}(0,m)=(\Delta m-\int_{\partial\Sigma_1}\frac{\partial m}{\partial\eta},\frac{\partial m}{\partial\eta}).
    \]
    It is an isomorphism and thus we apply the inverse function theorem and obtain that for $h\in (-\epsilon,\epsilon)$, where $\epsilon$ is a sufficiently small number, there exists a $C^1$ map between $h\in(-\epsilon,\epsilon)\to d_{h}(x)\in B_{\delta}(0)\subset F$ for every $h\in(-\epsilon,\epsilon)$, such that
    \[
    \Psi(h,d_h(x))=(0,0).
    \]
    We obtain a foliation of CMC surfaces meeting $\partial M$ at prescribed angle $\bar{\rho}$. Since $d_0=0$ by definition, we denote $m=\frac{\partial d_h}{\partial h}\vert_{h=0}$, and it is easily checked $m$ is a harmonic function satisfies the Neumann boundary condition and it is also zero. We then conclude that 
    \[
    |d_h|_{1,\alpha,\bar{\Sigma}_1}=o(h),
    \]
    $\Sigma_{h,d_h}$ is a CMC foliation near the vertex $p$.
    We now investigate the mean curvature more precisely.
    \begin{align*}
        \lambda_h=\frac{1}{h}\int_{\partial\Sigma_1}D_h+L_2d_h+Q_2(d_h)+\int_{\Sigma_1}L_1d_h+Q_1(d_h)+H_h+o(1),
    \end{align*}
    $\lambda_h\to 0$ as $h\to 0$, the proof is completed.
    
\end{proof}

\begin{proof}[Proof of Main Theorem: Conical point] $ $

    Suppose $M$ satisfies the strict cone angle comparison (\ref{strictconeangle}), then by Lemma \ref{nontrivialcone}, there exists a non-trivial minimizer $\Sigma$ of the capillary energy functional. Then $\Sigma_0$ is infinitesimally rigid and by Proposition \ref{prop:cmcfoliation}, there exists a CMC foliation $\{\Sigma_t\}_{(-\epsilon,\epsilon)}$ with prescribed contact angle $\bar{\rho}$.

    By the local splitting Theorem \ref{thm:local splitting} and standard ODE arguments which were also used in Li \cite{li2020polyhedron}, Chai-Wang \cite{CW1}, we conclude that the CMC foliation are all infinitesimally rigid and by connectedness of $M$, this is a global splitting thus $g$ is Euclidean.

    If the minimizer is trivial, then by Theorem \ref{thm:CMCnearvertex}, we have a CMC foliation whose mean curvature of leaves $\lambda_h$ converges to $0$ as $h \rightarrow 0$, and we apply arguments of the local splitting Theorem \ref{thm:local splitting} and obtain the same conclusion.
\end{proof}
\begin{rem}
    We omit details here since the arguments are the same as in Li \cite{li2020polyhedron} and Chai-Wang \cite{CW1}.
\end{rem}

\subsection{Smooth cases} In this section, we prove the existence of barrier with nonnegative mean curvature and contact angle whose lower bound is that of Euclidean case, which guarantees the existence of nontrivial minimizer in cases both rigid and nonrigid cases. We generalize Section 5 in \cite{CW1} to all smooth weakly convex domain cases. We take the reference point first which achieves strictly positive principal curvatures in Euclidean metric.

\begin{lem} \label{strictlyconvexpt}
    There exists $p_+\in\partial M$, such that $\bar{H}_{\partial M}(p_+)>0$ and $\det\RN{2}(p_+)>0$.
\end{lem}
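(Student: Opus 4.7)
The plan is a standard Euclidean comparison argument: I would choose $p_+$ to be a farthest point on $\partial M$ from some interior reference point, then observe that the inscribed sphere at $p_+$ forces both Euclidean principal curvatures to be strictly positive.

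More concretely, I would identify $M$ with its image as a weakly convex compact domain in $\mathbb{R}^3$ via the given diffeomorphism, pick any interior point $p_0 \in \mathring{M}$, and set
\[
r := \max_{p \in \partial M} |p - p_0|_{g_{Eucl}}.
\]
This maximum is attained at some $p_+ \in \partial M$ with $r > 0$, and by definition of $r$ one has $M \subset \overline{B_r(p_0)}$. In particular $\partial M$ is internally tangent to the sphere $S := \partial B_r(p_0)$ at $p_+$, with outward Euclidean unit normals agreeing at that point.

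Writing both $\partial M$ and $S$ as graphs over $T_{p_+}\partial M$ near $p_+$ and using that $\partial M$ lies on the inside of $S$, the standard second-order comparison yields the pointwise inequality of Euclidean second fundamental forms
\[
\RN{2}_{\partial M}(p_+)(v,v) \ \geq \ \RN{2}_{S}(p_+)(v,v) \ = \ \frac{|v|_{g_{Eucl}}^{2}}{r}
\]
for every $v \in T_{p_+}\partial M$. Hence both Euclidean principal curvatures of $\partial M$ at $p_+$ are bounded below by $1/r$, which immediately gives $\bar{H}_{\partial M}(p_+) \geq 2/r > 0$ and $\det \RN{2}(p_+) \geq 1/r^{2} > 0$. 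No genuine obstacle arises beyond being careful about the sign convention so that the comparison of second fundamental forms runs in the correct direction; in fact weak convexity of the domain plays no role in this step, since the argument only uses compactness of $\partial M$ as an embedded hypersurface in $\mathbb{R}^3$.
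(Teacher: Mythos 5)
Your argument is correct, but it runs along a genuinely different track than the paper's. The paper dispatches the lemma in one line: since $\partial M$ is a topological sphere, Gauss--Bonnet gives $\int_{\partial M}K\,dA = 4\pi > 0$, so there is a point $p_+$ with $K(p_+)>0$; the Gauss equation for a surface in $\mathbb{R}^3$ identifies $K = \det\RN{2}$, so $\det\RN{2}(p_+)>0$; and weak convexity (nonnegativity of $\RN{2}$) then forces both principal curvatures at $p_+$ to be strictly positive, hence $\bar{H}_{\partial M}(p_+)>0$. You instead pick a farthest point from an interior reference and use the enclosing-sphere comparison $\RN{2}_{\partial M}(p_+)\ge \tfrac{1}{r}g_{Eucl}$. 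Both are valid. What each buys: the paper's route is shorter and leans on hypotheses it already has on hand (sphere topology, weak convexity); yours is more elementary and more robust, requiring only compactness of the embedded boundary --- as you correctly note, neither the topology of $\partial M$ nor weak convexity is needed for your argument, which would carry over verbatim to non-convex or higher-genus boundaries. The only point to watch in your version is the sign convention on $\RN{2}$ so that the inscribed-sphere comparison runs in the stated direction, which you flag and which is indeed consistent with the convention the paper uses (boundary of a ball has $\RN{2}=\tfrac{1}{r}g_{Eucl}$ and $H>0$).
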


\begin{proof}
    By the Gauss equation together with the Gauss-Bonnet formula, the Lemma follows directly.
\end{proof}

We take prescribed contact angle $\overline{\rho}$ for Euclidean metric again if necessary by slicing with planes which are parallel with $T_{p_{+}} \partial M$ in $(M, g_{Eucl})$. By applying the translation if necessary, we assume $p_{+}$ to be an origin in our coordinate.

Let us consider the setup. We rotate the coordinate if necessary that principal curvature at $p_{+}$ has a principal curvature in the directions $\frac{\partial}{\partial x_{1}}$ and $\frac{\partial}{\partial x_{2}}$. We put $g= g_{0} + th + O(t^{2})$ where $g_{0}$ is a constant metric and $h$ is a bounded symmetric $2$-tensor. We put $g_{0} = (a_{ij})_{1 \le i,j \le 3}$, $a_{12}=a_{21}=0$ after rotating if necessary and take $(a^{ij})$ as an inverse matrix of $g_{0}$. Moreover, denote $e_{i} = \frac{\partial}{\partial x^{i}}$ and $e^{i} = a^{ij} e_{j}$ and we write $x = (x_{1},x_{2})$.

By the Implicit function theorem, we can write $\partial M$ to be
\begin{equation} \label{iftparametrization}
    \partial M = \{ (x_{1},x_{2}, -c_{11}x_{1}^{2} - 2c_{12} x_{1}x_{2} - c_{22} x_{2}^{2} + O(|x|^{3})  \text{ : for } x \text{ near } p_{+}\}.
\end{equation}

\begin{cor}
    We have 
    \[
    c_{11}c_{22}>c_{12}^2.
    \]
    \begin{proof}
We obtain the strict inequality by applying Lemma \ref{strictlyconvexpt}. 
    \end{proof}
\end{cor}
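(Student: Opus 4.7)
The plan is to identify the triple $(c_{11}, c_{12}, c_{22})$ appearing in the Taylor expansion (\ref{iftparametrization}) with the entries of the Euclidean second fundamental form $\RN{2}$ of $\partial M$ at $p_+$, up to an overall scaling and sign, and then to translate the strict positivity $\det \RN{2}(p_+) > 0$ supplied by Lemma \ref{strictlyconvexpt} directly into the claimed inequality.

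Concretely, by the set-up preceding (\ref{iftparametrization}) the point $p_+$ is placed at the origin with $T_{p_+}\partial M = \mathrm{span}\{\partial/\partial x_1, \partial/\partial x_2\}$, so near $p_+$ the hypersurface $\partial M$ is the graph $x_3 = f(x_1, x_2)$ with
\[
f(x) = -c_{11}x_1^2 - 2c_{12}x_1 x_2 - c_{22}x_2^2 + O(|x|^3).
\]
Since $\nabla f(0) = 0$, the Euclidean outward unit normal at $p_+$ is $\pm \partial/\partial x_3$, and for a graph at a critical point of its defining function the Euclidean second fundamental form agrees with the Hessian of $f$ up to that same sign. A direct computation yields
\[
\mathrm{Hess}\, f(0) = -2\begin{pmatrix} c_{11} & c_{12} \\ c_{12} & c_{22} \end{pmatrix},
\]
and consequently $\det \RN{2}(p_+) = 4\,(c_{11}c_{22} - c_{12}^2)$, a quantity that is independent of the choice of normal orientation.

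Invoking Lemma \ref{strictlyconvexpt} then gives $c_{11}c_{22} - c_{12}^2 > 0$ immediately. There is no real obstacle; the only bookkeeping item is the sign convention for the outward normal of $\partial M$, but this drops out because reversing the normal multiplies every principal curvature by $-1$ and therefore leaves the determinant of the Weingarten operator unchanged.
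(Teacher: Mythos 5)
Your proof is correct and is essentially the same argument the paper intends; the paper's one-line proof leaves implicit exactly the bookkeeping you carry out, namely that for a graph $x_3 = f(x_1,x_2)$ with $\nabla f(0)=0$ the Euclidean second fundamental form at the origin equals $\pm\mathrm{Hess}\,f(0) = \mp 2\begin{pmatrix}c_{11}&c_{12}\\c_{12}&c_{22}\end{pmatrix}$, so that $\det \RN{2}(p_+) = 4(c_{11}c_{22}-c_{12}^2)$ regardless of the choice of normal orientation, and Lemma \ref{strictlyconvexpt} then gives the strict inequality.
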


Now we construct a foliation with nonnegative mean curvature and contact angle which is larger or than equal to the Euclidean foliations, which will be a main technical part of this section. This gives the existence of barrier to ensure the existence of nontrivial minimizer of prescribed capillary functional.

We denote $B = \sqrt{a^{33}((\sqrt{a_{11}}c_{22}+ \sqrt{a_{22}}c_{11})^{2} + (\sqrt{a_{11}}- \sqrt{a_{22}})^{2}c_{12}^{2})} $ and we define
\begin{align*}
    b_{11} &= a^{33}B^{-1}c_{11}^{-1}(a_{11} (c_{11}c_{22}-c_{12}^{2}) + \sqrt{a_{11}a_{22}} (c_{11}^{2}+c_{12}^{2}))\\
    b_{12} &= a^{33}B^{-1}\sqrt{a_{11}a_{22}}(c_{11}+c_{22})\\ 
    b_{22} &= a^{33}B^{-1}c_{22}^{-1}(a_{22} (c_{11}c_{22}-c_{12}^{2}) + \sqrt{a_{11}a_{22}} (c_{12}^{2}+c_{22}^{2})) .
\end{align*} Also, we say $b_{21}=b_{12}$ and $c_{21}=c_{12}$. For $s,t>0$, let us consider the function $G_{s,t}$ to be
\begin{equation*}
    G_{s,t}(x_{1},x_{2}) = c_{11}(b_{11}(1+s)-1)x_{1}^{2} + c_{22}(b_{22}(1+s)-1)x_{2}^{2} + 2 c_{12} (b_{12}(1+s)-1) x_{1}x_{2} - t^{2}.
\end{equation*}
We define $\Sigma_{s,t} : = \{ (x_{1},x_{2},G_{s,t}(x_{1},x_{2})): (x_{1},x_{2}) \in D \} \cap M$ where $D$ is a unit disk for small $t>0$. Moreover, we can define a set $E \subset \mathbb{R}^{2}$ by
\begin{equation*}
    E_{s} := \{ x \in \mathbb{R}^{2} : c_{11} b_{11} x_{1}^{2} + c_{22}b_{22} x_{2}^{2} + 2 c_{12} b_{12} x_{1}x_{2} <(1+s)^{-1} \}
\end{equation*}
and note that $\frac{1}{t} \Sigma_{s,t} \rightarrow E_{s}$ as $t \rightarrow 0$. Hence as in \cite{CW1} and for small $t>0$, we write $\Sigma_{s,t}$ as a map $E_{s} \rightarrow \Sigma_{s,t}$ such that 
\begin{equation*}
    \Sigma_{s,t} = \{ (tx_{1}+O(t^{3}),tx_{2}+O(t^{3}), t^{2} \{ c_{11}(b_{11}(1+s)-1)x_{1}^{2} + 2c_{12}(b_{12}(1+s)-1) x_{1}x_{2} + c_{22}(b_{22}(1+s)-1) x_{2}^{2} \} + O(t^{4})  : x \in E \}
\end{equation*}
We denote the mean curvature at $x \in \Sigma_{s,t}$ of $\Sigma_{s,t}$ by $H^{g_{0}}_{s,t}(x)$. 

We first compute the difference between contact angles of $\Sigma_{s,t}$ and $\partial M$ between on metric $g_{0}$ and Euclidean metric. Denote the contact angles between $\Sigma_{s,t}$ and $\partial M$ at $x \in \Sigma_{s,t}$ by $\rho_{s,t}^{g}(x)$ and corresponding Euclidean prescribed angle $\overline{\rho}$ by $\overline{\rho}_{s,t}$. We generalize computations in the proof of Proposition 5.16 and Corollary 5.17 in \cite{CW1}. We will use the computation of contact angle $\rho_{s,t}$ when the mean curvature of the foliation $\Sigma_{0,t}$ converges to strictly positive number, and we simplify $\rho_{t} := \rho_{0,t}$ only otherwise. We denote $\Sigma_{t} := \Sigma_{0,t}$ as well in this case.

\begin{prop} \label{cadifference} We have the following comparison of contact angles 
\begin{equation*}
    \cos \rho^{g}_{s,t}(x) = \cos \overline{\rho}_{s,t}(x) - 4st^{2} \bigg\{ \frac{(x_{1}c_{11}b_{11}+ x_{2}c_{12}b_{12})^{2}}{a_{11}a^{33}}+ \frac{(x_{1}c_{12}b_{12}+ x_{2}c_{22}b_{22})^{2}} {a_{22}a^{33}}\bigg\} +O(s^{2}) + O(t^{3})
\end{equation*}
\begin{proof}
We denote the normal vector of $\Sigma_{s,t}$ and $\partial M$ at $x$ by $v_{1}$ and $v_{2}$, respectively.
Then we have
\begin{align*}
v_{1} &= \{ -2tx_{\beta}c_{\alpha\beta}(b_{\alpha\beta}(1+s)-1) +O(t^{3})\}e^{\alpha} + (1+O(t^{2}))e^{3} \\
v_{2} &= ( -2tx_{\beta}c_{\alpha\beta}+O(t^{3})) e^{\alpha} + (1+O(t^{2}))e^{3}.
\end{align*}
We write
\begin{equation*}
    w:= v_{1}-v_{2} = \{ -2tx_{\beta}c_{\alpha\beta}b_{\alpha\beta}(1+s) +O(t^{3})\}e^{\alpha} + O(t^{2}).
\end{equation*}
Then we have as in \cite{CW1} as
\begin{align}
\nonumber \cos \measuredangle_{g_{0}} (\Sigma_{s,t}, \partial M) &= 1- \frac{g_{0}(w,w)g_{0}(v_{1},v_{2}) - g_{0}(w,v_{1})g_{0}(w,v_{2})}{2 g_{0}(v_{1},v_{2})^{2}} +O(t^{4}) \\ \label{angleg0}
&= 1 - 2 \frac{(x_{\beta}c_{\alpha\beta}b_{\alpha\beta}(1+s))^{2}}{a_{\alpha\alpha}a^{33}}t^{2} + O(t^{3}).
\end{align}
We can calculate the angle in Euclidean metric as well:
\begin{equation} \label{angleeucl}
\cos \overline{\rho}_{s,t}(x) = 1- 2t^{2}\{ (x_{1}c_{11}+x_{2}c_{12})^{2} + (x_{1}c_{12}+x_{2}c_{22})^{2} \} + O(t^{3}).
\end{equation}
By computing difference between (\ref{angleg0}) and (\ref{angleeucl}), we have
\begin{align} \label{angleg0eucl}
    \cos \rho^{g_{0}}_{s,t}(x) &- \cos \overline{\rho}_{s,t}(x) \\ \nonumber
    &= 2t^{2} \bigg\{ \bigg( c_{11}^{2}+ c_{12}^{2} - \frac{b_{11}^{2}c_{11}^{2}(1+s)^{2}}{a_{11}a^{33}} - \frac{b_{12}^{2}c_{12}^{2}(1+s)^{2}}{a_{22}a^{33}} \bigg) x_{1}^{2}  \\ \nonumber &+ \bigg(2 c_{12}(c_{11}+c_{22})- \frac{2b_{11}b_{12}c_{11}c_{12}(1+s)^{2}}{a_{11}a^{33}} - \frac{2b_{12}b_{22}c_{12}c_{22}(1+s)^{2}}{a_{22}a^{33}} \bigg) x_{1}x_{2} \\ \nonumber &+ \bigg( c_{12}^{2}+ c_{22}^{2} - \frac{b_{12}^{2}c_{12}^{2}(1+s)^{2}}{a_{11}a^{33}} - \frac{b_{22}^{2}c_{22}^{2}(1+s)^{2}}{a_{22}a^{33}} \bigg) x_{2}^{2} \bigg\}+O(t^{3}).
\end{align}
Now we compute
\begin{align} \nonumber 
c_{11}^{2}&+ c_{12}^{2} - \frac{b_{11}^{2}c_{11}^{2}(1+s)^{2}}{a_{11}a^{33}} - \frac{b_{12}^{2}c_{12}^{2}(1+s)^{2}}{a_{22}a^{33}} \\ 
&= c_{11}^{2}+ c_{12}^{2} - a^{33}B^{-2}(\sqrt{a_{11}} (c_{11}c_{22}-c_{12}^{2}) + \sqrt{a_{22}} (c_{11}^{2}+c_{12}^{2}))^{2}(1+s)^{2} \label{cal1stterm} \\ \nonumber &- a^{33}B^{-2} a_{11} c_{12}^{2}(c_{11}+c_{22})^{2}(1+s)^{2} \\ &= \nonumber c_{11}^{2}+c_{12}^{2} - a^{33}B^{-2} ((\sqrt{a_{11}}c_{22}+ \sqrt{a_{22}}c_{11})^{2} + (\sqrt{a_{11}}- \sqrt{a_{22}})^{2}c_{12}^{2}) \\ \nonumber &(c_{11}^{2}+c_{12}^{2})(1+s)^{2} \\  \nonumber &= \label{cal1stterm} -2s(c_{11}^{2}+c_{12}^{2}) +O(s^{2})
\end{align}
and
\begin{equation} \label{cal2ndterm}
    c_{12}^{2}+ c_{22}^{2} - \frac{b_{12}^{2}c_{12}^{2}(1+s)^{2}}{a_{11}a^{33}} - \frac{b_{22}^{2}c_{22}^{2}(1+s)^{2}}{a_{22}a^{33}} = -2s(c_{12}^{2}+ c_{22}^{2} ) +O(s^{2}).
\end{equation}
Also we can compute
\begin{align}
     \nonumber c_{12}(c_{11}&+c_{22})- \frac{b_{11}b_{12}c_{11}c_{12}(1+s)^{2}}{a_{11}a^{33}} - \frac{b_{12}b_{22}c_{12}c_{22}(1+s)^{2}}{a_{22}a^{33}} \\ \nonumber &= c_{12}(c_{11}+c_{22}) - a^{33}B^{-2}\sqrt{a_{22}}c_{12}(\sqrt{a_{11}} (c_{11}c_{22}-c_{12}^{2}) + \sqrt{a_{22}} (c_{11}^{2}+c_{12}^{2}))(c_{11}+c_{22})(1+s)^{2} \\ \nonumber &- a^{33}B^{-2}\sqrt{a_{11}}c_{12}(\sqrt{a_{22}} (c_{11}c_{22}-c_{12}^{2}) + \sqrt{a_{11}} (c_{12}^{2}+c_{22}^{2}))(c_{11}+c_{22})(1+s)^{2} \\ \nonumber &= c_{12}(c_{11}+c_{22})- a^{33}B^{-2}(a_{11}(c_{12}^{2}+c_{22}^{2})+ a_{22}(c_{11}^{2}+c_{12}^{2}) + 2 a_{11}^{\frac{1}{2}}a_{22}^{\frac{1}{2}}(c_{11}c_{22}-c_{12}^{2}))c_{12}(c_{11}+c_{22})(1+s)^{2} \\ &= \label{cal3rdterm}-2s c_{12}(c_{11}+c_{22}) +O(s^{2}).
\end{align}
By plugging (\ref{cal1stterm}), (\ref{cal2ndterm}) and (\ref{cal3rdterm}) into (\ref{angleg0eucl}), we obtain
\begin{align}
     \nonumber \cos \rho^{g_{0}}_{s,t}(x) - \cos \overline{\rho}_{s,t}(x) &= - 4st^{2}((c_{11}^{2}+c_{12}^{2})x_{1}^{2}+2c_{12}(c_{11}+c_{22}) x_{1}x_{2}+(c_{12}^{2}+ c_{22}^{2} )x_{2}^{2}) +O(s^{2})+O(t^{3}) \\ \label{squareangle}&= - 4st^{2} \bigg\{ \frac{(x_{1}c_{11}b_{11}+ x_{2}c_{12}b_{12})^{2}}{a_{11}a^{33}}+ \frac{(x_{1}c_{12}b_{12}+ x_{2}c_{22}b_{22})^{2}} {a_{22}a^{33}}\bigg\} +O(s^{2}) + O(t^{3}),
\end{align}
where we obtain (\ref{squareangle}) by calculations in (\ref{cal1stterm}), (\ref{cal2ndterm}) and (\ref{cal3rdterm}). By applying Corollary 5.17 in \cite{CW1}, we obtain the formula in $g$ metric and the desired formula.
\end{proof}
\end{prop}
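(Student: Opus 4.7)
The plan is to expand both $\cos\rho^{g}_{s,t}(x)$ and $\cos\overline{\rho}_{s,t}(x)$ to order $t^2$ in the normal directions of the two surfaces and then to verify algebraic identities that reflect the carefully chosen definitions of $b_{11}, b_{12}, b_{22}$ in terms of $a_{ij}, c_{ij}$ and $B$. First, I would read off the (un-normalized) normals $v_1$ to $\Sigma_{s,t}$ and $v_2$ to $\partial M$ at the point $x$ directly from the graph representations. Using the parametrization of $\Sigma_{s,t}$ by $(x_1,x_2)\mapsto(tx_\alpha+O(t^3),\,t^2\,[c_{\alpha\beta}(b_{\alpha\beta}(1+s)-1)x_\alpha x_\beta]+O(t^4))$ and the parametrization of $\partial M$ via \eqref{iftparametrization}, each normal can be written as
\begin{equation*}
v_1=\{-2tx_\beta c_{\alpha\beta}(b_{\alpha\beta}(1+s)-1)+O(t^3)\}e^\alpha+(1+O(t^2))e^3,\qquad v_2=(-2tx_\beta c_{\alpha\beta}+O(t^3))e^\alpha+(1+O(t^2))e^3.
\end{equation*}

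Next, I would compute the $g_0$-angle using the small-difference expansion $\cos\measuredangle_{g_0}(v_1,v_2)=1-\tfrac{g_0(w,w)g_0(v_1,v_2)-g_0(w,v_1)g_0(w,v_2)}{2g_0(v_1,v_2)^2}+O(t^4)$ with $w:=v_1-v_2$, obtaining a clean quadratic form in $(x_1,x_2)$ with coefficients involving $b_{\alpha\beta}c_{\alpha\beta}(1+s)$ and the diagonal entries $a_{\alpha\alpha},a^{33}$. In parallel, the Euclidean angle $\cos\overline{\rho}_{s,t}(x)$ is a direct graph computation giving a quadratic form in $(x_1,x_2)$ with coefficients involving only $c_{ij}$. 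Subtracting the two produces a quadratic form whose coefficients of $x_1^2$, $x_1x_2$, $x_2^2$ must be computed to order $O(s)$.

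The heart of the proof is then verifying three algebraic identities which ensure (a) that the zeroth order in $s$ contribution vanishes identically, and (b) that the first order in $s$ contribution assembles into the quadratic form $-4\{(x_1c_{11}b_{11}+x_2c_{12}b_{12})^2/(a_{11}a^{33})+(x_1c_{12}b_{12}+x_2c_{22}b_{22})^2/(a_{22}a^{33})\}$. Concretely, I need to check
\begin{align*}
c_{11}^2+c_{12}^2-\frac{b_{11}^2c_{11}^2(1+s)^2}{a_{11}a^{33}}-\frac{b_{12}^2c_{12}^2(1+s)^2}{a_{22}a^{33}}&=-2s(c_{11}^2+c_{12}^2)+O(s^2),\\
c_{12}^2+c_{22}^2-\frac{b_{12}^2c_{12}^2(1+s)^2}{a_{11}a^{33}}-\frac{b_{22}^2c_{22}^2(1+s)^2}{a_{22}a^{33}}&=-2s(c_{12}^2+c_{22}^2)+O(s^2),
\end{align*}
and the analogous mixed identity, each of which reduces at $s=0$ to showing that $B^{-2}\cdot[\text{certain polynomial in }\sqrt{a_{ii}},c_{ij}]$ equals $1/a^{33}$. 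This will be the main obstacle: it requires substituting the explicit formulas for $b_{ij}$ and then recognizing the combination $(\sqrt{a_{11}}c_{22}+\sqrt{a_{22}}c_{11})^2+(\sqrt{a_{11}}-\sqrt{a_{22}})^2c_{12}^2=B^2/a^{33}$ on the nose. I expect the cleanest way is to expand $(\sqrt{a_{11}}(c_{11}c_{22}-c_{12}^2)+\sqrt{a_{22}}(c_{11}^2+c_{12}^2))^2+a_{11}c_{12}^2(c_{11}+c_{22})^2$ and collect terms to match $(c_{11}^2+c_{12}^2)B^2/a^{33}$; the mixed case factors through $\sqrt{a_{11}a_{22}}c_{12}(c_{11}+c_{22})$ similarly.

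Finally, I would pass from the $g_0$-angle to the $g$-angle by citing Corollary 5.17 of \cite{CW1}: since $g=g_0+th+O(t^2)$ with $h$ a bounded symmetric 2-tensor and the contact angle depends smoothly on the metric, the correction from $g_0$ to $g$ is absorbed into the $O(t^3)$ error term, yielding the claimed formula for $\cos\rho^g_{s,t}(x)-\cos\overline{\rho}_{s,t}(x)$.
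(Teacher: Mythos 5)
Your proposal follows the paper's proof essentially step for step: same un-normalized normals $v_1, v_2$ read off from the graph parametrizations, same small-difference formula for $\cos\measuredangle_{g_0}(v_1,v_2)$, same subtraction producing a quadratic form in $(x_1,x_2)$, the same three algebraic identities driven by the definitions of $b_{ij}$ and $B$ (with the zeroth-order-in-$s$ cancellation and the $-2s(\cdots)$ first-order terms), and the same invocation of Corollary 5.17 of \cite{CW1} to absorb the $g_0\to g$ correction into $O(t^3)$. You have correctly identified the pivotal identity $(\sqrt{a_{11}}c_{22}+\sqrt{a_{22}}c_{11})^2+(\sqrt{a_{11}}-\sqrt{a_{22}})^2c_{12}^2=B^2/a^{33}$, which is exactly what the paper uses to close the algebra.
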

We have the following direct generalization of mean curvature formula of parametrized surface of Lemma 5.22 in \cite{CW1}.

\begin{lem} \label{mcsmoothpara}
Suppose $\Sigma_{t}$ is a family of surfaces which is given by
\begin{equation*} 
    \Sigma_{t} = \{ (tx_{1}+O(t^{3}),tx_{2}+O(t^{3}), t^{2}(c_{11}x_{1}^{2} + 2c_{12} x_{1}x_{2} + c_{22} x_{2}^{2}) + O(t^{4})  \text{ : for } x \text{ near } p_{+} \text{ and } x \in R \}
\end{equation*}
where $R$ is an open set in $\mathbb{R}$ and $A>0$. Then for small $t>0$, the mean curvature $H_{\sigma_{t}}$ under $g_{0}$ is
\begin{equation*}
    H_{\Sigma_{t}}(x) = - \frac{1}{ \sqrt{a^{33}}} \bigg( \frac{2c_{11}}{a_{11}} + \frac{2c_{22}}{a_{22}}\bigg) + t L(x_{1},x_{2})+ O(t^{3}),
\end{equation*}
where $L$ is a linear term bounded uniformly. 
\begin{proof}
For $\alpha \in \{1, 2\}$, the basis of a tangent plane $\Sigma_{t}$ at $x$ would be
\begin{equation*}
    X_{\alpha}= Ae_{\alpha} + 2 c_{\alpha\beta} t x_{\beta} e_{3} + O(t^{2}) e_{\beta} + O(t^{2}) e_{3}
\end{equation*}
with an Einstein notation. The only part we need a modification is to compute $\nabla ^{g_{0}}_{X_{\alpha}} X_{\beta}$:
\begin{equation} \label{covarder}
    \nabla ^{g_{0}}_{X_{\alpha}} X_{\beta} = 2 (Ac_{\alpha\beta}+O(t)) e_{3} + \sum_{\gamma=1}^{2} O(t) e_{\gamma}.
\end{equation}
By plugging (\ref{covarder}) into the mean curvature formula as in the proof of Lemma 5.22 in \cite{CW1}, we obtain the mean curvature formula.
\end{proof}
\end{lem}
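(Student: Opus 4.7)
The plan is to compute the mean curvature of $\Sigma_t$ by direct calculation from the given parametrization, exploiting the crucial fact that $g_0$ is a constant metric in the chosen coordinates. Because its Christoffel symbols vanish, $\nabla^{g_0}_{X_\alpha}X_\beta$ reduces to the ordinary Euclidean second derivative of the parametrization map, so the whole computation is essentially a graph-type calculation in a background with a fixed (but possibly non-orthonormal) inner product. The argument closely follows that of Lemma 5.22 in \cite{CW1}; the only new feature is accounting for the general constant metric $g_0$ in place of the Euclidean metric.

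Concretely, I would first differentiate $\Phi(x) = (tx_1+O(t^3),\, tx_2+O(t^3),\, t^2 c_{\alpha\beta}x_\alpha x_\beta+O(t^4))$ to extract the tangent frame
\begin{equation*}
X_\alpha = t e_\alpha + 2t^2 c_{\alpha\beta}x_\beta e_3 + O(t^3),
\end{equation*}
and compute $g^\Sigma_{\alpha\beta}=g_0(X_\alpha,X_\beta) = t^2 a_{\alpha\beta}+O(t^3)$. Since $a_{12}=0$ the leading order is diagonal, and its inverse has the form $(g^\Sigma)^{\alpha\beta} = t^{-2}\mathrm{diag}(a_{11}^{-1},a_{22}^{-1}) + O(t^{-1})$.

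Next, I would compute $\nabla^{g_0}_{X_\alpha}X_\beta = \partial_\alpha\partial_\beta\Phi = 2t^2 c_{\alpha\beta}e_3 + O(t^3)$, identify the $g_0$-unit normal of $\Sigma_t$ as $N = \pm e^3/\sqrt{a^{33}} + O(t)$ using $g_0(e^3,e^3)=a^{33}$ and $g_0(e_3,e^3)=1$, and pair these to obtain the second fundamental form $A_{\alpha\beta} = \pm\, 2t^2 c_{\alpha\beta}/\sqrt{a^{33}} + O(t^3)$. Taking the trace $H = (g^\Sigma)^{\alpha\beta}A_{\alpha\beta}$ produces the claimed leading-order constant, while the $O(t^{-1})$ corrections to the inverse metric and the $O(t^3)$ corrections to the frame and to the normal assemble, after cancellation of the overall factor of $t^2$, into a term linear in $(x_1,x_2)$ with coefficient of order $t$.

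The main obstacle is purely bookkeeping: keeping track of which contributions enter the linear-in-$t$ correction, and verifying that possible off-diagonal entries $a_{13}, a_{23}$ of $g_0$ affect only higher-order terms. Since the statement only requires $L$ to be linear in $(x_1,x_2)$ with uniformly bounded coefficients, I would not need to evaluate $L$ explicitly; rough order-of-magnitude estimates suffice. The overall sign of the leading term is then fixed by the outward orientation of the foliation leaves.
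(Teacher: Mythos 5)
Your proposal is correct and takes essentially the same route as the paper: you exploit that $g_0$ is a constant metric so $\nabla^{g_0}_{X_\alpha}X_\beta$ reduces to $\partial^2_{\alpha\beta}\Phi$, and you then assemble the mean curvature from the tangent frame, the leading-order normal $e^3/\sqrt{a^{33}}$, and the trace of the second fundamental form against $(g^\Sigma)^{\alpha\beta}$. The paper's proof states exactly the computation of $\nabla^{g_0}_{X_\alpha}X_\beta$ as the only modification needed and otherwise defers to the mean curvature formula of Lemma 5.22 in Chai--Wang; your write-up just unpacks that reference explicitly. (One cosmetic point: the letter $A$ appearing in the paper's tangent frame formula and in the lemma statement is an unexplained holdover from Chai--Wang's notation and plays the role of your $t$; your version is internally consistent with the stated parametrization.)
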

By applying Lemma \ref{mcsmoothpara}, we obtain the following mean curvature difference on $\Sigma_{s,t}$ and $\partial M$.

\begin{prop} \label{mcdifference} For small $t>0$, the difference of mean curvature $H^{g_{0}}_{t}$ and $H_{t, \partial M}^{g_{0}}$ at an intersection point $(x,G_{t}(x))$ is
\begin{equation*}
    H^{g_{0}}_{t}(x)- H_{t, \partial M}^{g_{0}} (x) = -  2 \frac{B}{\sqrt{a_{11}a_{22}a^{33}}} + t L(x_{1},x_{2})+ O(t^{2})
\end{equation*}
\begin{proof}
We apply Lemma \ref{mcsmoothpara} to derive 
\begin{equation} \label{mcsmoothfol}
    H_{t}^{g_{0}} (x) = - \frac{1}{ \sqrt{a^{33}}} \bigg( \frac{2c_{11}(b_{11}-1)}{a_{11}} + \frac{2c_{22}(b_{22}-1)}{a_{22}}\bigg) + t L(x_{1},x_{2})+ O(t^{2}).
\end{equation}
Moreover, by (\ref{iftparametrization}) we have
\begin{equation*}
    \partial M = \{ (x_{1},x_{2}, -c_{11}x_{1}^{2} - 2c_{12} x_{1}x_{2} - c_{22} x_{2}^{2} + O(|x|^{3})  \text{ : for } x \text{ near } O\}.
\end{equation*}
Here we can apply Lemma \ref{mcsmoothpara} again and we obtain
\begin{equation} \label{mcbdy} 
H_{t, \partial M}^{g_{0}}(x) = \frac{1}{ \sqrt{a^{33}}} \bigg( \frac{2c_{11}}{a_{11}} + \frac{2c_{22}}{a_{22}}\bigg) + t L(x_{1},x_{2})+ O(t^{3})
\end{equation}
By subtracting (\ref{mcbdy}) from (\ref{mcsmoothfol}), we obtain the following:
\begin{align*}
     H^{g_{0}}_{t}(x)- H_{t, \partial M}^{g_{0}} (x) &=   - \frac{2}{ \sqrt{a^{33}}} \bigg( \frac{c_{11}b_{11}}{a_{11}} + \frac{c_{22}b_{22}}{a_{22}}\bigg) + t L(x_{1},x_{2})+ O(t^{2}) \\ &= - \frac{2\sqrt{a^{33}}}{B} \bigg( \frac{(\sqrt{a_{11}} (c_{11}c_{22}-c_{12}^{2}) + \sqrt{a_{22}} (c_{11}^{2}+c_{12}^{2}))}{\sqrt{a_{11}}} + \frac{(\sqrt{a_{22}} (c_{11}c_{22}-c_{12}^{2}) + \sqrt{a_{11}} (c_{12}^{2}+c_{22}^{2}))}{\sqrt{a_{22}}} \bigg) \\ &+ t L(x_{1},x_{2})+ O(t^{2}) \\ &= - \frac{2\sqrt{a^{33}}}{B} \cdot \frac{B^{2}}{a^{33}\sqrt{a_{11}a_{22}}} + t L(x_{1},x_{2})+ O(t^{2}) \\ &= -  2 \frac{B}{\sqrt{a_{11}a_{22}a^{33}}} + t L(x_{1},x_{2})+ O(t^{2}).
\end{align*}
\end{proof}
\end{prop}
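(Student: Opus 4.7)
The plan is to apply Lemma \ref{mcsmoothpara} twice, once to the foliation leaf $\Sigma_t=\Sigma_{0,t}$ and once to $\partial M$, exploiting that both are quadratic graphs of exactly the form required by that lemma. For $\Sigma_t$, the parametrization of $\Sigma_{s,t}$ written out just before Proposition \ref{cadifference} (with $s=0$) has second-order coefficients $c_{\alpha\beta}(b_{\alpha\beta}-1)$, so Lemma \ref{mcsmoothpara} yields
\[
H^{g_0}_t(x) = -\frac{1}{\sqrt{a^{33}}}\Bigl(\frac{2c_{11}(b_{11}-1)}{a_{11}}+\frac{2c_{22}(b_{22}-1)}{a_{22}}\Bigr) + tL(x_1,x_2) + O(t^2).
\]
For $\partial M$, I use the implicit-function-theorem parametrization \eqref{iftparametrization}; rescaling $x\mapsto tx$ puts it in the form of Lemma \ref{mcsmoothpara} with coefficients $-c_{\alpha\beta}$, giving
\[
H^{g_0}_{t,\partial M}(x) = \frac{1}{\sqrt{a^{33}}}\Bigl(\frac{2c_{11}}{a_{11}}+\frac{2c_{22}}{a_{22}}\Bigr) + tL(x_1,x_2) + O(t^3).
\]
Subtracting cancels the $-1$ pieces and reduces the proposition to the algebraic identity
\[
\frac{c_{11}b_{11}}{a_{11}}+\frac{c_{22}b_{22}}{a_{22}} = \frac{B}{\sqrt{a_{11}a_{22}}}.
\]

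To verify this identity, I substitute the explicit formulas for $b_{11}$ and $b_{22}$. Each term factors as $\tfrac{a^{33}}{B}$ times an expression obtained by pulling $\sqrt{a_{11}}$, respectively $\sqrt{a_{22}}$, out of the numerator; summing gives
\[
\frac{a^{33}}{B}\Bigl(2(c_{11}c_{22}-c_{12}^2) + \sqrt{a_{22}/a_{11}}(c_{11}^2+c_{12}^2) + \sqrt{a_{11}/a_{22}}(c_{12}^2+c_{22}^2)\Bigr).
\]
On the other hand, expanding
\[
B^2 = a^{33}\bigl((\sqrt{a_{11}}c_{22}+\sqrt{a_{22}}c_{11})^2 + (\sqrt{a_{11}}-\sqrt{a_{22}})^2 c_{12}^2\bigr)
\]
and dividing by $\sqrt{a_{11}a_{22}}$ produces exactly $a^{33}$ times the bracketed expression above, so the sum collapses to $B/\sqrt{a_{11}a_{22}}$, yielding the stated leading constant $-2B/\sqrt{a_{11}a_{22}a^{33}}$.

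The first step is essentially bookkeeping: reading off a Taylor expansion from Lemma \ref{mcsmoothpara} applied to two quadratic graphs. The only nontrivial content is the algebraic identity, and even that is routine once one knows the target; the definitions of $b_{11}$, $b_{22}$, and $B$ have been engineered precisely so that the leading-order mean curvature difference is a spatial constant (independent of $x_1, x_2$), which is what is needed for $\Sigma_t$ to serve as a barrier with strictly positive leading mean curvature in the subsequent nontriviality argument. I do not anticipate any real obstacle in this proposition beyond keeping track of the $\sqrt{a_{11}}$ versus $\sqrt{a_{22}}$ bookkeeping and the factorization of $B^2$.
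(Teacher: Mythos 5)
Your proposal is correct and follows essentially the same route as the paper: apply Lemma \ref{mcsmoothpara} to $\Sigma_{0,t}$ with quadratic coefficients $c_{\alpha\beta}(b_{\alpha\beta}-1)$ and to $\partial M$ with coefficients $-c_{\alpha\beta}$, subtract so the $-1$ contributions cancel, and reduce to the algebraic identity $\tfrac{c_{11}b_{11}}{a_{11}}+\tfrac{c_{22}b_{22}}{a_{22}}=\tfrac{B}{\sqrt{a_{11}a_{22}}}$, which you verify by substituting the formulas for $b_{11},b_{22}$ and expanding $B^2$ exactly as the authors do.
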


Now we have the following mean curvature comparison on leaves with the Euclidean metric. We denote the mean curvature on $\Sigma_{t}$ and $\partial M$ in metric $g$ by $H_{t}^{g}$ and $H^{g}_{t,\partial M}$, respectively. Moreover, we denote the mean curvature on $\partial M$ on Euclidean metric by $\overline{H}_{t, \partial M}$.
\begin{cor} Suppose $g= g_{0}+th+O(t^{2})$ where $h$ is a bounded symmetric $2$-tensor. Then we have the asymptotic formula of the mean curvature near $O$.
\begin{align*}
    H_{t}^{g}(x) &= H^{g}_{t,\partial M}(x) - \overline{H}_{t, \partial M}(x) +   2(c_{11}+c_{22}) -  2 \frac{B}{\sqrt{a_{11}a_{22}a^{33}}} + t L(x_{1},x_{2})+ O(t^{2})
\end{align*}
\begin{proof}

Given parametrization of $\partial M$ by (\ref{iftparametrization}), we have
\begin{equation} \label{euclideanbdymc}
    \overline{H}_{t, \partial M} = 2c_{11} + 2c_{22}.
\end{equation}
By applying (\ref{euclideanbdymc}) to arguments in Corollary 5.23 in \cite{CW1} directly, we obtain the formula.
\end{proof}
\end{cor}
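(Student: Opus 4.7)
The plan is to bootstrap this Corollary directly from Proposition \ref{mcdifference}, which already handles the harder computation in the background metric $g_{0}$. That proposition gives
\begin{equation*}
H^{g_{0}}_{t}(x) - H^{g_{0}}_{t,\partial M}(x) = -\frac{2B}{\sqrt{a_{11}a_{22}a^{33}}} + tL(x_{1},x_{2}) + O(t^{2}),
\end{equation*}
so the first step is to transfer this identity from the frozen metric $g_{0}$ to the genuine metric $g = g_{0} + th + O(t^{2})$. Because the mean curvature of a fixed smoothly embedded surface depends linearly on the metric and its first derivatives, the change of metric induces $H^{g}_{t}-H^{g_{0}}_{t} = t\Phi_{1}(x;h)+O(t^{2})$ and $H^{g}_{t,\partial M}-H^{g_{0}}_{t,\partial M} = t\Phi_{2}(x;h)+O(t^{2})$, with $\Phi_{1},\Phi_{2}$ uniformly bounded. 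Both corrections can be absorbed into a common linear term $tL$, yielding
\begin{equation*}
H^{g}_{t}(x) - H^{g}_{t,\partial M}(x) = -\frac{2B}{\sqrt{a_{11}a_{22}a^{33}}} + tL(x_{1},x_{2}) + O(t^{2}).
\end{equation*}

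Next I would compute $\overline{H}_{t,\partial M}(x)$ directly from the Implicit Function parametrization \eqref{iftparametrization}. Since $\partial M$ is locally the graph $x_{3} = -c_{ij}x_{i}x_{j} + O(|x|^{3})$, its Euclidean second fundamental form at $p_{+}$ is $-2(c_{ij})$, so the Euclidean mean curvature satisfies
\begin{equation*}
\overline{H}_{t,\partial M}(x) = 2(c_{11}+c_{22}) + tL(x_{1},x_{2}) + O(t^{2}),
\end{equation*}
where the correction again comes from evaluating at the point $(tx_{1}+O(t^{3}),tx_{2}+O(t^{3}))$ rather than at the origin, and can be absorbed into $tL$.

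The final step is a trivial telescoping identity: write
\begin{equation*}
H^{g}_{t}(x) = \bigl(H^{g}_{t}(x) - H^{g}_{t,\partial M}(x)\bigr) + H^{g}_{t,\partial M}(x) = \bigl(H^{g}_{t}(x) - H^{g}_{t,\partial M}(x)\bigr) - \overline{H}_{t,\partial M}(x) + \overline{H}_{t,\partial M}(x) + H^{g}_{t,\partial M}(x) - H^{g}_{t,\partial M}(x),
\end{equation*}
substitute the expressions derived in the two previous steps, and collect the resulting two linear terms into a single bounded function $L(x_{1},x_{2})$. The only thing to monitor is that all implicit constants are uniform in $x$ on the compact coordinate patch around $p_{+}$; this is automatic because $h$ is bounded, the parametrization of $\partial M$ is smooth, and Lemma \ref{mcsmoothpara} already provides uniform control of the error terms. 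No new geometric input is required beyond Proposition \ref{mcdifference} and the Euclidean boundary computation, so the proof is essentially algebraic once those two ingredients are in hand.
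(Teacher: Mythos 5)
Your approach is essentially the same as the paper's: the paper computes $\overline{H}_{t,\partial M}$ from the implicit-function parametrization and then cites Corollary 5.23 of \cite{CW1}, which performs exactly the transfer you describe — invoking Proposition \ref{mcdifference} in the background metric $g_0$, then noting that the perturbation $g = g_0 + th + O(t^2)$ shifts the mean curvatures of $\Sigma_t$ and $\partial M$ only by $O(t)$ terms absorbed into $tL(x_1,x_2)$. Your proposal just unpacks what the citation performs implicitly, so the content is correct and not genuinely different. One correction worth making: your displayed telescoping identity does not actually equal $H^g_t$ — after cancellation the last expression collapses to $H^g_t - H^g_{t,\partial M}$, because you inserted $+H^g_{t,\partial M}-H^g_{t,\partial M}$ but dropped the original additive $H^g_{t,\partial M}$. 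The intended identity is simply
\[
H^g_t = H^g_{t,\partial M} - \overline{H}_{t,\partial M} + \overline{H}_{t,\partial M} + \bigl(H^g_t - H^g_{t,\partial M}\bigr),
\]
after which you substitute your two expansions and set $L = L_1 + L_2$. Also, ``depends linearly on the metric'' is slightly too strong (mean curvature is not linear in $g$); what you actually use, and what suffices, is that the first variation of $H$ under $g \mapsto g + th$ is $O(t)$ with coefficient given by a bounded linear functional of $h$, which is the right statement and matches what \cite{CW1} proves.
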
 \label{mcestimate}
Now we take $H_{p_{+}} : = \lim_{t \rightarrow 0} H_{t}^{g}(x)$. By taking the limit, we get the formula on mean curvature of leaves where leave approaches to $p_{+}$. 
\begin{cor} We have the following formula on the limit of mean curvature of leaves near $p_{+}$:
\begin{equation*}
    H_{p_{+}} = H^{g}_{\partial M}(O) - \overline{H}_{\partial M}(O) +   2(c_{11}+c_{22}) -  2 \frac{B}{\sqrt{a_{11}a_{22}a^{33}}}.
\end{equation*}
\end{cor}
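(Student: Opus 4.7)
The plan is to obtain the stated identity simply by taking the limit $t\to 0$ in the asymptotic expansion already established in the preceding corollary, namely
\begin{equation*}
H_t^g(x) = H^g_{t,\partial M}(x) - \overline{H}_{t,\partial M}(x) + 2(c_{11}+c_{22}) - 2\frac{B}{\sqrt{a_{11}a_{22}a^{33}}} + tL(x_1,x_2) + O(t^2),
\end{equation*}
evaluated at the intersection point $(x,G_t(x)) \in \Sigma_t \cap \partial M$. The key observation is that our parametrization of the leaves $\Sigma_{s,t}$ sends $(x_1,x_2)\in E_s$ to a point of the form $(tx_1 + O(t^3), tx_2 + O(t^3), O(t^2))$, so as $t \to 0$ the intersection point $(x,G_t(x))$ necessarily converges to the origin $O = p_+$, uniformly in the choice of $x$ in any fixed bounded region.

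First I would invoke continuity of the boundary mean curvatures of $\partial M$ with respect to both metrics $g$ and $g_{Eucl}$: since $H^g_{\partial M}$ and $\overline{H}_{\partial M}$ are continuous functions on $\partial M$, and the intersection point converges to $O$, we obtain
\begin{equation*}
H^g_{t,\partial M}(x) \to H^g_{\partial M}(O), \qquad \overline{H}_{t,\partial M}(x) \to \overline{H}_{\partial M}(O)
\end{equation*}
as $t \to 0$. Second, since $L$ is linear in $(x_1,x_2)$ with uniformly bounded coefficients (as guaranteed by Lemma \ref{mcsmoothpara}), the error term $tL(x_1,x_2) + O(t^2)$ tends to $0$ as $t \to 0$ on the bounded region $E_0$. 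The two middle terms $2(c_{11}+c_{22})$ and $-2B/\sqrt{a_{11}a_{22}a^{33}}$ are independent of $t$ and $x$, so they pass through the limit unchanged.

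Combining these, the limit $H_{p_+} := \lim_{t \to 0} H_t^g(x)$ exists (and is independent of $x$, as expected), and equals
\begin{equation*}
H^g_{\partial M}(O) - \overline{H}_{\partial M}(O) + 2(c_{11}+c_{22}) - 2\frac{B}{\sqrt{a_{11}a_{22}a^{33}}},
\end{equation*}
which is exactly the claimed formula. There is essentially no obstacle here — the result is a direct corollary of the asymptotic expansion of the previous statement; the only point worth making explicit is that the $tL(x_1,x_2)$ term, although $x$-dependent, is dominated uniformly in $t$ on the relevant bounded set $E_0$, so the limiting value does not depend on the choice of sequence of points $x \in E_0$ used to evaluate $H_t^g$.
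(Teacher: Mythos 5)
Your proof is correct and follows exactly the approach the paper intends: take the limit $t\to 0$ in the asymptotic expansion of the preceding corollary, using that the intersection point converges to $O=p_+$ and that the $tL$ and $O(t^2)$ terms vanish uniformly on the bounded domain. The paper states the result without spelling out these routine continuity and uniformity details, so your write-up simply makes explicit what the paper leaves implicit.
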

We construct the foliation after dividing into two cases; One is the case that `scaled' metric at $p_{+}$ restricted to $\{ x_{3} = 0 \}$ is Euclidean i.e. $a_{11} = a_{22}$, and another is all other cases. When $a_{11} \neq a_{22}$, we prove $H_{p_{+}}>0$ first and construct the foliation as Proposition 5.24 in \cite{CW1}.

\begin{prop} \label{positivemeancurv} For $g_{0} = (a_{ij})_{1 \le i,j \le 3}$ with $a_{12}=a_{21}=0$, suppose $a_{11} \neq a_{22}$. Then $H_{p_{+}}>0$.
\end{prop}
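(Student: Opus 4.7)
The plan is to squeeze the claim out of two independent pieces of strictness: the scaled mean curvature comparison applied to the non-isotropic restriction $g_0|_{T_O\partial M}=\mathrm{diag}(a_{11},a_{22})$, which is strictly stronger than the Euclidean bound precisely because $a_{11}\neq a_{22}$, and the strict convexity $c_{11}c_{22}>c_{12}^{2}$ at $p_+$ supplied by Lemma \ref{strictlyconvexpt} and its corollary.

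First I would note that $\overline H_{\partial M}(O)=2(c_{11}+c_{22})$ follows directly from the parametrization of $\partial M$, so the formula of the preceding corollary collapses to
\[
H_{p_+}=H^g_{\partial M}(O)-\frac{2B}{\sqrt{a_{11}a_{22}a^{33}}}.
\]
I would then apply $H^{2}g\ge H_{0}^{2}g_{\mathrm{Eucl}}$ restricted to $T_O\partial M$ to the Euclidean unit vector $e_i$ for which $a_{ii}=\min(a_{11},a_{22})$, obtaining
\[
H^g_{\partial M}(O)\ge \frac{H_0(O)}{\sqrt{\min(a_{11},a_{22})}}=\frac{2(c_{11}+c_{22})}{\sqrt{\min(a_{11},a_{22})}},
\]
since $H_0(O)=2(c_{11}+c_{22})>0$ by Lemma \ref{strictlyconvexpt}.

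It therefore suffices to verify the strict algebraic inequality obtained by squaring and cancelling $a^{33}$:
\[
(c_{11}+c_{22})^{2}\max(a_{11},a_{22})>\bigl(\sqrt{a_{11}}\,c_{22}+\sqrt{a_{22}}\,c_{11}\bigr)^{2}+(\sqrt{a_{11}}-\sqrt{a_{22}})^{2}c_{12}^{2}.
\]
Taking without loss of generality $a_{11}<a_{22}$, a direct expansion factors the difference $\mathrm{LHS}-\mathrm{RHS}$ as
\[
(\sqrt{a_{22}}-\sqrt{a_{11}})\Bigl[(\sqrt{a_{11}}+\sqrt{a_{22}})c_{22}^{2}+2\sqrt{a_{22}}\,c_{11}c_{22}-(\sqrt{a_{22}}-\sqrt{a_{11}})c_{12}^{2}\Bigr].
\]
The outer factor is strictly positive because $a_{11}\neq a_{22}$. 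For the bracket, I would use the strict convexity $c_{12}^{2}<c_{11}c_{22}$ to dominate the negative term by $(\sqrt{a_{22}}-\sqrt{a_{11}})c_{11}c_{22}$; the bracket then collapses to $(\sqrt{a_{11}}+\sqrt{a_{22}})\,c_{22}(c_{11}+c_{22})>0$, which yields the claim.

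The main obstacle is this algebraic identity. The mean curvature comparison alone is not strict, and the gap must be closed by the combination of the non-isotropy $a_{11}\neq a_{22}$ together with the strict convexity $c_{11}c_{22}>c_{12}^{2}$. In the isotropic case $a_{11}=a_{22}$ the same chain of inequalities only produces $H_{p_+}\ge 0$, which is precisely why the proposition excludes it and why the paper must handle that case by a separate construction.
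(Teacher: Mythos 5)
Your argument is correct and follows essentially the same route as the paper: apply the scaled mean curvature comparison to the coordinate directions $e_1,e_2\in T_O\partial M$ to get $H^g_{\partial M}(O)\ge H_0(O)/\sqrt{\min(a_{11},a_{22})}$, reduce $H_{p_+}>0$ to a purely algebraic inequality between $(c_{11}+c_{22})^2\max(a_{11},a_{22})$ and $B^2/a^{33}$, then factor the difference so that a factor of $(\sqrt{a_{22}}-\sqrt{a_{11}})$ pulls out and the remaining bracket is controlled by the strict convexity $c_{11}c_{22}>c_{12}^2$. The paper performs the identical factorization (stated with the opposite WLOG $a_{11}>a_{22}$, so the roles of $c_{11}$ and $c_{22}$ are swapped in the bracket) and closes it with the same inequality; your presentation of the bracket bound via $c_{22}(c_{11}+c_{22})$ is a minor cosmetic variant.
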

\begin{proof}
From $H^{2} g(e_{i},e_{i}) \ge H_{0}^{2} g_{Eucl}(e_{i},e_{i})$ for $i=1,2$, we have
\begin{equation*}
    H \ge \max \bigg\{\frac{1}{\sqrt{a_{11}}}, \frac{1}{\sqrt{a_{22}}} \bigg\} H_{0}. 
\end{equation*}

Without loss of any generality, we assume $a_{11}>a_{22}$ and recall $\overline{H}_{\partial M}(O) = 2(c_{11}+c_{22})$. Hence by Corollary \ref{mcestimate}, we have
\begin{align*}
H_{p_{+}} &= H^{g}_{\partial M}(O) - \overline{H}_{\partial M}(O) +   2(c_{11}+c_{22}) -  2 \frac{B}{\sqrt{a_{11}a_{22}a^{33}}} \\ &\ge \bigg( \frac{1}{\sqrt{a_{22}}} -1 \bigg) \overline{H}_{\partial M}(O) +   2(c_{11}+c_{22}) -  2 \frac{B}{\sqrt{a_{11}a_{22}}a^{33}} \\ &= 2\bigg( \frac{1}{\sqrt{a_{22}}} -1 \bigg) (c_{11}+c_{22}) +   2(c_{11}+c_{22}) -  2 \frac{B}{\sqrt{a_{11}a_{22}a^{33}}} \\ &= 2 \bigg( \frac{(c_{11}+c_{22})}{\sqrt{a_{22}}} - \frac{B}{\sqrt{a_{11}a_{22}a^{33}}} \bigg).
\end{align*}
On the other hand, we have
\begin{align}
\nonumber \bigg( \frac{(c_{11}+c_{22})}{\sqrt{a_{22}}} \bigg)^{2} - \bigg( \frac{B}{\sqrt{a_{11}a_{22}a^{33}}} \bigg)^{2} &= \bigg( \frac{(c_{11}+c_{22})}{\sqrt{a_{22}}} \bigg)^{2} - \frac{a^{33}((\sqrt{a_{11}}c_{22}+ \sqrt{a_{22}}c_{11})^{2} + (\sqrt{a_{11}}- \sqrt{a_{22}})^{2}c_{12}^{2})}{a_{11}a_{22}a^{33}} \\ \nonumber &= c_{11} \bigg( \frac{1}{\sqrt{a_{22}}}-\frac{1}{\sqrt{a_{11}}} \bigg) \bigg(\frac{2c_{22}}{\sqrt{a_{22}}} + \frac{c_{11}}{\sqrt{a_{11}}}+ \frac{c_{11}}{\sqrt{a_{22}}}\bigg) - \frac{(\sqrt{a_{11}}- \sqrt{a_{22}})^{2}c_{12}^{2}}{a_{11}a_{22}} \\ \label{squarecomp} &= \bigg( \frac{1}{\sqrt{a_{22}}}-\frac{1}{\sqrt{a_{11}}} \bigg) \bigg( \frac{2c_{11}c_{22}}{\sqrt{a_{22}}} + \frac{c_{11}^{2}}{\sqrt{a_{11}}}+ \frac{c_{11}^{2}}{\sqrt{a_{22}}} - \frac{c_{12}^{2}}{\sqrt{a_{22}}} + \frac{c_{12}^{2}}{\sqrt{a_{11}}}\bigg) >0.
\end{align}
We obtain the strict inequality of (\ref{squarecomp}) from $c_{11}c_{22}>c_{12}^{2}$ from the strict convexity of $p_{+}$. \end{proof}

Then we have the following existence of foliation for nonzero mean curvature case as in Proposition 5.24 in \cite{CW1}.
\begin{prop} \label{nonzeromcfoliation}
For $g_{0} = (a_{ij})_{1 \le i,j \le 3}$ with $a_{12}=a_{21}=0$, suppose $a_{11} \neq a_{22}$. Then there exists a foliation $\Sigma_{s,t}$ near $p_{+}$ such that $H_{s,t}>0$ and $\rho_{s,t}(x) > \overline{\rho}_{s,t}(x)$ for each $x \in \Sigma_{s,t} \cap \partial M$.
\begin{proof}
By Proposition \ref{positivemeancurv}, we have $H_{p_{+}}>0$. By Proposition \ref{cadifference}, We can find small $t_{0}, s_{0}>0$ such that for $0\le t \le t_{0}$ and $0 \le s \le s_{0}$ such that $H_{s,t}\ge0$ and $\rho_{s,t}(x) > \overline{\rho}_{s,t}(x)$ for each $x \in \Sigma_{s,t} \cap \partial M$. 
\end{proof}
\end{prop}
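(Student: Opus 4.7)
The plan is to combine the two estimates already in hand: Proposition~\ref{positivemeancurv}, which gives the strict positivity $H_{p_+}>0$ under the hypothesis $a_{11}\neq a_{22}$, and Proposition~\ref{cadifference}, which gives the sign of the angle defect to leading order. The construction of $\Sigma_{s,t}$ is already fixed; all that remains is to choose the two parameters $s_0,t_0>0$ small enough that both the mean curvature condition and the contact angle condition hold simultaneously on $\Sigma_{s,t}\cap\partial M$ for all $0<s\le s_0$, $0<t\le t_0$.

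I would first handle the mean curvature. By the mean curvature expansion leading up to Corollary~\ref{mcestimate}, the quantity $H^g_{s,t}(x)$ depends continuously on the parameters $(s,t)$ and on the base point, and in the limit $(s,t)\to(0,0)$ it tends to $H_{p_+}$ uniformly for $x$ in the bounded parameter domain $E_s$. Since Proposition~\ref{positivemeancurv} yields $H_{p_+}>0$, a standard continuity/compactness argument produces $s_0,t_0>0$ such that $H^g_{s,t}(x)>0$ for every $x\in\Sigma_{s,t}\cap\partial M$ whenever $0<s\le s_0$ and $0<t\le t_0$. This is exactly the argument Chai--Wang carry out in the rotationally symmetric setting, and it adapts without change.

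For the angle comparison, Proposition~\ref{cadifference} writes
\[
\cos\rho^{g}_{s,t}(x)-\cos\bar\rho_{s,t}(x)=-4st^{2}\,Q(x)+O(s^{2})+O(t^{3}),
\]
where $Q(x)\ge 0$ is the sum of two squares appearing in the statement. On the intersection curve $\Sigma_{s,t}\cap\partial M$, the parameter $x$ lies (up to a small perturbation) on the boundary of the ellipse $E_s$, hence is uniformly bounded away from the origin. Since the symmetric matrix with entries $c_{\alpha\beta}b_{\alpha\beta}$ is nondegenerate (its determinant is a nonzero constant multiple of $c_{11}c_{22}-c_{12}^{2}>0$, by strict convexity at $p_+$), the two linear forms inside $Q$ are linearly independent, and thus $Q(x)\ge Q_0>0$ on this curve. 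Then the leading order term $-4st^{2}Q(x)$ is strictly negative, and for appropriately coordinated small $s_0,t_0$ it dominates the error. Concretely, one first shrinks $s_0$ so that $4s_0 Q_0$ dominates the constant hidden in the $O(s^2)$ (in the proof of Proposition~\ref{cadifference} this error actually carries a factor $t^2$, so the comparison is $s_0 Q_0 \gg C s_0^2$, i.e., $s_0$ below a universal threshold), and then shrinks $t_0$ so that $4s_0 t^{2} Q_0$ dominates the $O(t^{3})$ term, which amounts to $t\ll s_0$. With $s_0,t_0$ so chosen, $\cos\rho^g_{s,t}(x)<\cos\bar\rho_{s,t}(x)$, hence $\rho^g_{s,t}(x)>\bar\rho_{s,t}(x)$.

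The main obstacle I anticipate is precisely the order-counting in the last step: the formal statement ``$O(s^2)+O(t^3)$'' has to be read by unwinding the proof of Proposition~\ref{cadifference}, where each error actually appears multiplied by an extra $t^2$ from the outer factor. Once one fixes a small $s_0$ first and then lets $t$ go to zero, the negative leading term wins on the compact intersection curve, and the foliation $\{\Sigma_{s_0,t}\}_{0<t\le t_0}$ has the desired properties.
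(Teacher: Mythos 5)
Your proposal is correct and takes essentially the same route as the paper: establish $H_{p_+}>0$ via Proposition~\ref{positivemeancurv}, then use the angle expansion in Proposition~\ref{cadifference} to choose the parameters. The paper states the conclusion tersely, and your elaboration — fixing $s_0$ first so that the $O(s^2t^2)$ error is dominated, then shrinking $t_0$ so that the $O(t^3)$ error is dominated, together with the observation that $Q(x)$ is a positive-definite quadratic form (its determinant is a positive multiple of $c_{11}c_{22}-c_{12}^2>0$) and hence bounded below on the intersection curve — is precisely the order-counting that makes the paper's one-line invocation of Proposition~\ref{cadifference} rigorous; your final phrasing, taking the one-parameter family $\{\Sigma_{s_0,t}\}_{0<t\le t_0}$ with $s_0>0$ fixed, is in fact cleaner than the paper's ``$0\le s\le s_0$'' since the angle comparison is vacuous at $s=0$.
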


Let us consider the case $a_{11} = a_{22}$. Since our boundary comparison is scaling invariant, we denote $g'= a_{11}^{-1}g$. Notice that the arguments in the construction of the foliation only relies on $g_{0}|_{\{x_{3}=0\}}=g_{Eucl}|_{\{x_{3}=0\}}$ and mean curvature estimate in the sense of Proposition \ref{mcestimate} and its evolutions, and their work rely neither on $\mathbb{S}^{1}$-symmetry nor their metric comparison condition e.g. $g \ge g_{Eucl}$. Hence, we can apply Proposition 5.28 in \cite{CW1} to $g'$. We denote our new foliation to be $\Sigma'_{t}$ only to avoid confusion with previous foliations.

\begin{prop} \label{zeromcfoliation}
    For $g_{0} = (a_{ij})_{1 \le i,j \le 3}$ with $a_{12}=a_{21}=0$, suppose $a_{11}=a_{22}$. Then there exists a nonnegative constant mean curvature foliation $\Sigma'_{t}$ with prescribed angle $\overline{\rho}$ along $\partial \Sigma'_{t}$.
\begin{proof}
    Let us endow metric $g'$ on $M$ and apply Proposition 5.28 and obtain a CMC foliation near $p_{+}$. This foliation satisfies nonnegative mean curvature condition and prescribed angle condition since we only applied the scaling transformation.
\end{proof}
\end{prop}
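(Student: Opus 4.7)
The plan is to reduce the case $a_{11}=a_{22}$ to the construction already available in Chai--Wang \cite{CW1}, exploiting the scale invariance of the boundary comparison condition. The essential observation is that our hypothesis $H^{2}g \ge H_{0}^{2}g_{Eucl}$ is invariant under constant rescaling of $g$, and so is the contact angle $\rho$. Therefore we can conformally normalize the metric at $p_{+}$ so that its restriction to the horizontal tangent plane becomes Euclidean, thereby landing inside the regime in which the Chai--Wang foliation construction (their Proposition~5.28) operates.

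Concretely, first I would set $g' := a_{11}^{-1}g$ and check the following invariances at $p_{+}$: the coefficient matrix of $g'_{0}$ at the origin satisfies $a'_{11}=a'_{22}=1$ and $a'_{12}=0$, so $g'_{0}|_{\{x_{3}=0\}} = g_{Eucl}|_{\{x_{3}=0\}}$; the comparison $H_{g'}^{2}g' \ge H_{0}^{2}g_{Eucl}$ still holds; the prescribed Euclidean angle $\overline{\rho}$ is unchanged; and the sign of mean curvature is unchanged. Second, I would go through the derivations behind Propositions \ref{cadifference} and \ref{mcdifference} to confirm that the only structural input they use is precisely $g_{0}|_{\{x_{3}=0\}} = g_{Eucl}|_{\{x_{3}=0\}}$; rotational symmetry and global metric comparison $g \ge g_{Eucl}$ never enter. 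Third, I would apply the Chai--Wang construction verbatim to $(M,g')$: it produces a family $\Sigma'_{t}$ of graphs over horizontal disks at $p_{+}$ with constant mean curvature $\lambda_{t}\ge 0$ (with respect to $g'$) and with contact angle equal to $\overline{\rho}$ along $\partial \Sigma'_{t}$. Fourth, I would pull this foliation back to $(M,g)$: since conformal rescaling by a constant preserves the foliation, preserves the sign of $H$, and preserves the contact angle, $\{\Sigma'_{t}\}$ is also a nonnegative CMC foliation for $g$ meeting $\partial M$ at the prescribed angle $\overline{\rho}$.

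The delicate point, and what I expect to be the main obstacle, is the careful verification at step two that none of the mean-curvature and contact-angle expansions in Chai--Wang secretly depend on either $\mathbb{S}^{1}$-symmetry or on the stronger metric comparison assumption used there. Tracing through their computations one has to confirm that all of the structural coefficients in the expansions in $t$ are computed intrinsically from the $2$-jet of $g_{0}$ at the base point together with the second fundamental form of $\partial M$, and that the inverse function theorem argument that yields the foliation uses only local boundedness of certain error terms, not a global metric inequality. Once this is checked, Propositions \ref{cadifference} and \ref{mcdifference} (or their non-symmetric analogs for $g'$) immediately give $H^{g'}_{s,t}\ge 0$ and $\rho_{s,t}\ge \overline{\rho}_{s,t}$ for small $s,t > 0$, and scaling back to $g$ concludes the proof.
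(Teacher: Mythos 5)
Your proposal is correct and matches the paper's own proof essentially verbatim: both rescale to $g' = a_{11}^{-1}g$, observe that the comparison hypothesis and prescribed angle are scale invariant and that $g'_0|_{\{x_3=0\}} = g_{Eucl}|_{\{x_3=0\}}$, check that Chai--Wang's foliation construction (Proposition 5.28) uses neither $\mathbb{S}^1$-symmetry nor their global metric comparison, and then apply it to $g'$ and scale back.
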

\begin{prop}\label{barriergivesminimizer} Suppose there exists a foliation constructed as in Proposition \ref{nonzeromcfoliation} and \ref{zeromcfoliation}. Then there is a nontrivial minimizer of $A^{\overline{\rho}}$ among $\mathcal{I}$.
\begin{proof} 

 Let 
\[
t_0:=\sup_{t\in (0,\epsilon)}\{t: \forall\, \tau<t,\quad \sup_{x \in \Sigma_{\tau}' } H(x)=0\}.
\]

\textbf{Case 1:} Let us assume $t_0=0$, then by the fact that $\Sigma_t'$ has prescribed angle whose lower bound is  $\bar{\rho}$ at each point along $\partial\Sigma_t'$ and the first Variation formula (\ref{firstv}), we have
\[
\frac{d}{dt}F(\Sigma'_t)\big\vert_{t=0}<0,
\]
which implies that the trivial domain at axis point $p_{+}$ is not a minimizer of the capillary functional and there exists a non-trivial minimizer.

\textbf{Case 2:} Now we consider the case of $t_0>0$, then for $0<t<\frac{t_0}{2}$, the foliation $\Sigma_t'$ has a zero mean curvature and prescribed angle $\bar{\rho}$ along $\partial\Sigma_t'$. Moreover, for each $t$, the capillary functional of $\Sigma_t'$ is zero and each slice $\Sigma_t'$ is a nontrivial minimizer of $A^{\overline{\rho}}$. If zero is not the minimal value of the capillary functional among $\mathcal I$, then there has to be a non-trivial minimizer with a negative value of $A^{\overline{\rho}}$, hence we can ensure the existence of nontrivial minimizer in both cases.

Summarizing the above two cases, we conclude that a non-trivial minimizer of the capillary functional always exists. 
\end{proof}
\end{prop}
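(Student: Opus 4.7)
The plan is to use the foliation $\{\Sigma'_t\}$ from Propositions \ref{nonzeromcfoliation} and \ref{zeromcfoliation} as an explicit comparison barrier near $p_+$. Existence of a minimizer of $\mathcal{I}$ in the class $\mathcal{E}$ is routine: $A^{\bar{\rho}}$ is bounded below on $\mathcal{E}$ because $|\cos\bar{\rho}|\le 1$, so a minimizing sequence converges in BV to a set which is regular by Theorem \ref{regularity}. It therefore suffices to rule out that the unique minimizer is the degenerate (trivial) domain concentrated at $p_+$, by exhibiting admissible $\Omega'_t$ enclosed by leaves of the foliation with $A^{\bar{\rho}}(\Omega'_t)\le 0$.

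Following the hint suggested by the statement, I would introduce the threshold
\[
t_0 := \sup\{t \in (0,\epsilon) : H \equiv 0 \text{ on } \Sigma'_\tau \text{ for every } \tau < t\}
\]
and split into two cases. In the case $t_0 = 0$, arbitrarily close to $p_+$ one finds leaves $\Sigma'_\tau$ with a patch of strictly positive mean curvature, while the contact angle $\rho$ of each leaf satisfies $\cos\rho \le \cos\bar{\rho}$ by Proposition \ref{nonzeromcfoliation}. Applied to the outward deformation generating $\{\Sigma'_t\}$, the first variation formula \eqref{firstv} then gives $\tfrac{d}{dt}F(\Sigma'_t)\big|_{t=0^+} < 0$, from which $F(\Sigma'_t) < 0 = F(\emptyset)$ for small $t$, forcing $\mathcal{I} < 0$ and hence any minimizer to be non-trivial. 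In the case $t_0 > 0$, each $\Sigma'_t$ with $t < t_0/2$ is a zero-mean-curvature capillary surface meeting $\partial M$ at angle exactly $\bar{\rho}$ (by Proposition \ref{zeromcfoliation}); the first variation makes $t \mapsto F(\Sigma'_t)$ constant, and since this constant equals $0$ in the shrinking limit $t \to 0^+$, each $\Omega'_t$ is already a non-trivial admissible domain realizing $A^{\bar{\rho}} = 0$. Either $\mathcal{I}<0$, forcing any minimizer to be non-trivial, or $\mathcal{I} = 0$ and the leaves themselves are non-trivial minimizers.

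The main technical point to verify is the sign calculation in Case 1. Since the foliation meets $\partial M$ at its own angle $\rho$, the vector $\eta - \nu\cos\rho$ is normal to $\partial M$ along $\partial\Sigma'_t$, while $Y$ is tangent to $\partial M$; the boundary integrand of \eqref{firstv} therefore rearranges to $\langle Y,\nu\rangle_g(\cos\rho - \cos\bar{\rho})$, and one must check that with the outward convention for $Y$ this combines with the interior term $-\int Hf$ with the same (negative) sign. The admissibility requirements that $\Omega'_t \in \mathcal{E}$ (contractibility and containment of the polar cap $\partial M \cap \{z=t_+\}$) hold automatically for the thin regions cut off near $p_+$ by the foliation, so no further topological argument is needed.
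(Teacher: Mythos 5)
Your proof is essentially the same as the paper's: you introduce the same threshold $t_0$, split into the same two cases, and run the same first-variation argument in Case 1 and the same ``$F\equiv 0$ on the minimal leaves'' observation in Case 2. The only additions you make (the remark that existence of a BV minimizer is standard, and the explicit rewriting of the boundary integrand of \eqref{firstv} as $\langle Y,\nu\rangle_g(\cos\rho-\cos\bar\rho)$ using that $\eta-\nu\cos\rho$ is normal to $\partial M$) fill in details the paper leaves implicit rather than taking a different route.
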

\begin{proof}[Proof of Main Theorem: smooth case]

By Proposition \ref{nonzeromcfoliation} and \ref{zeromcfoliation}, there exists a non-negative constant mean curvature foliation $\Sigma'_{t}$ with prescribed angle $\bar{\rho}$ along $\partial \Sigma_t'$ in the small neighborhood of $p_{+}$. Now we apply Proposition \ref{barriergivesminimizer}, and we denote a nontrivial minimizer of $A^{\overline{\rho}}$ as $\Sigma$. By Corollary \ref{cor:infinitesmally}, we can construct a zero mean curvature foliation $\Sigma_t$ near $\Sigma$ with prescribed angle $\bar{\rho}$ along $\partial\Sigma_t$.

It is now straightforward to check that the unit normal vector field on $\Sigma$ is parallel (see \cite{bray2010rigidity} or \cite {micallef2015splitting}) and thus the metric is Euclidean. The remaining part is to prove that each slice of $\Sigma$ is congruent to the corresponding slice in the Euclidean model up to scaling.

By Theorem \ref{ineqproof}, we know 
\begin{align*}
    \kappa_{\partial\Sigma_t}&= \frac{H_{\partial M}}{\sin\bar{\rho}}+\frac{1}{\sin^2\bar{\rho}}\partial_{\nu_t}\cos\bar{\rho}\\
    &=\frac{\bar{H}_{\partial M}}{\sin\bar{\rho}}|\nu_t|_{g_{Eucl}}+\frac{|\nu_t|_{g_{Eucl}}}{\sin^2\bar{\rho}}\partial_{\bar{\nu}_t}\cos\bar{\rho}\\
    &=|\nu_t|_{g_{Eucl}}\bar{\kappa}_{\partial \bar{\Sigma}_t}.
\end{align*}

Note that we already proved $g$ is Euclidean up to scaling, then $|\nu_t|_{g_{Eucl}}$ must be a constant for any $t$. Here, $g_{Eucl}$ denotes the Euclidean metric in a model space. We hence proved that each slice is congruent to the corresponding slice of the model case up to a scaling factor and thus the isometry follows. By the connectedness of the domain, we extend the local splitting result to the global splitting result and complete the proof.

\end{proof}

\begin{rem}
    The CMC construction only uses the local comparison near the point $p_+$ which gives the local splitting result. However, to obtain the global splitting result, we need the comparison assumption to hold on every point on the boundary and the connectedness of the boundary.
\end{rem}
\appendix \section{Computations on Euclidean coordinate} Under the parametrization of the boundary of a weakly convex Euclidean domain $\psi(u,v)  = (x(u,v),y(u,v),v)$ where $(u,v) \in S^{1} \times [v_{-},v_{+}] $ and $x_{u}^{2} +y_{u}^{2} = \text{const}$ on level sets $\partial M \cap \{ z= v \}$ for each $v \in [v_{-},v_{+}]$. Denote two tangent vector to be $X_{u}$ and $X_{v}$ by $u$ and $v$ parameter, respectively. Denote $\overline{\tau}$ and $\overline{\nu}$ to be the unit tangential vector field on each level set in counterclockwise direction and the unit normal vector field in $\partial M$. Note that $\overline{\tau}$ and $\overline{\nu}$ consist of orthonormal basis of $T_{p} \partial M$ at each $p \in \partial M$. 
\begin{prop} \label{mcformula} We have the mean curvature formula as follows.
\begin{align}
    H_0+\frac{1}{\sin\bar{\rho}}\nabla_{\bar{\nu}}\cos\bar{\rho}=\sin\bar{\rho}\bar{k}(\gamma_v(u)),
\end{align}
at each point $q=\phi(u,v)$. $\gamma_v(u)=\phi(u,v)$ for fixed $v$, and it is a smooth curve parameterized by $u$, $\bar{k}$ is the geodesic curvature of $\gamma_v(u)$ at point $q$ w.r.t. Euclidean metric.
\end{prop}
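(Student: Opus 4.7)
The plan is to decompose $\partial_z$ in the orthonormal frame $\{X,\bar\tau,\bar\nu\}$ of $\mathbb R^3$ adapted to $\partial M$, rewrite $\nabla_{\bar\nu}\cos\bar\rho$ in terms of the second fundamental form of $\partial M$, and then identify the remaining normal-curvature term with $\sin\bar\rho\cdot\bar k(\gamma_v)$.

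Since $\bar\tau=X_u/|X_u|$ is horizontal by the chosen parametrization, $\bar\tau\perp\partial_z$, and therefore $\partial_z\in\mathrm{span}\{X,\bar\nu\}$. Fixing the orientation of $\bar\nu$ so that
\begin{align*}
\partial_z=\cos\bar\rho\,X-\sin\bar\rho\,\bar\nu,
\end{align*}
and differentiating $\cos\bar\rho=\langle X,\partial_z\rangle_{g_{Eucl}}$ along $\bar\nu$, using that $\partial_z$ is Euclidean-parallel and $\langle\nabla_{\bar\nu}X,X\rangle=0$, gives
\begin{align*}
\nabla_{\bar\nu}\cos\bar\rho=\langle\nabla_{\bar\nu}X,\partial_z\rangle =-\sin\bar\rho\,\langle\nabla_{\bar\nu}X,\bar\nu\rangle =-\sin\bar\rho\cdot\mathrm{II}(\bar\nu,\bar\nu),
\end{align*}
where $\mathrm{II}(V,W):=\langle\nabla_V X,W\rangle$, so that $H_0=\mathrm{II}(\bar\tau,\bar\tau)+\mathrm{II}(\bar\nu,\bar\nu)$ is the mean curvature with the outward convention. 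Substituting reduces the identity to the single claim
\begin{align*}
\mathrm{II}(\bar\tau,\bar\tau)=\sin\bar\rho\cdot\bar k(\gamma_v).
\end{align*}

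For this remaining step I use the fact that $\gamma_v$ lies entirely in the horizontal plane $\{z=v\}$, so its ambient curvature vector $\vec k=\nabla_{\bar\tau}\bar\tau$ is horizontal and perpendicular to $\bar\tau$. The parametrization condition $x_u^2+y_u^2=c^2$ constant in $u$ forces $X_{uu}\perp X_u$, and a short direct computation yields
\begin{align*}
\vec k=\frac{X_{uu}}{c^2}=\bar k\cdot\frac{(-y_u,x_u,0)}{c},\qquad \bar k=\frac{x_uy_{uu}-y_ux_{uu}}{c^3},
\end{align*}
where $\bar k$ is precisely the Euclidean geodesic curvature of $\gamma_v$ inside $\{z=v\}$. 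Pairing with the outward unit normal $X=(y_u,-x_u,x_uy_v-y_ux_v)/A$ and using $\sin\bar\rho=c/A$ (with $A=|X_u\times X_v|$) gives
\begin{align*}
\langle\vec k,X\rangle=\frac{\bar k}{cA}\bigl[-y_u^2-x_u^2\bigr]=-\frac{\bar k\,c}{A}=-\bar k\sin\bar\rho,
\end{align*}
so $\mathrm{II}(\bar\tau,\bar\tau)=-\langle\nabla_{\bar\tau}\bar\tau,X\rangle=-\langle\vec k,X\rangle=\sin\bar\rho\cdot\bar k$, which is the desired identity.

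The only real subtlety is orientation bookkeeping: the sign convention for $\bar\nu$ in the decomposition of $\partial_z$ and the sign of $\bar k$ must be fixed consistently with the outward orientation of $X$ and the counterclockwise orientation of $\bar\tau$ used elsewhere in the paper. Once those choices are pinned down the identity follows immediately from the two displayed computations above, without any further analysis.
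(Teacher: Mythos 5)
Your proof is correct, and it takes a genuinely different (more computational) route than the paper's. Both proofs hinge on the same orthogonal decomposition of $e_z$ (equivalently of the outward normal $X$) in the frame adapted to $\partial M$, and both identify $\nabla_{\bar\nu}\cos\bar\rho$ with $-\sin\bar\rho\,\RN{2}(\bar\nu,\bar\nu)$. Where they diverge is in the remaining step: the paper writes $H_0$ as the boundary divergence of $X=\cos\bar\rho\, e_z+\sin\bar\rho\,\bar\eta$ (with $\bar\eta$ the horizontal unit normal of $\gamma_v$ inside the slice), and reduces everything to a short intrinsic lemma that $\nabla^{\partial M}\!\cdot\bar\eta=\bar k(\gamma_v)$; you instead reduce to the single identity $\RN{2}(\bar\tau,\bar\tau)=\sin\bar\rho\,\bar k$ and prove it by a direct coordinate calculation using $X_{uu}\perp X_u$, the explicit formula for $X$, and $\sin\bar\rho=c/A$. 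The paper's argument is cleaner and coordinate-free, and the divergence lemma is re-used in its weak-convexity Lemma A.3; your approach is more elementary and makes the role of the parametrization condition $x_u^2+y_u^2=\mathrm{const}$ fully transparent, at the cost of having to track the orientation and sign conventions for $\bar\nu$, $X$, and $\bar k$ by hand (which you do correctly). Either write-up would be acceptable.
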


\begin{proof}
    For each fixed value $v$,  we define $\bar{\eta}$ as the unit normal vector field along the curve $\gamma_v(u)$, and it lies in the tangent space of the level set $\partial M\cap\{z=v\}$. For each point in $\partial M$, the vector field $\bar{\eta}$ is well-defined, and thus we regard it as a vector field along $\partial M$.

    We rewrite the unit normal vector field $\bar{X}$ as
    \[
    \bar{X}(u,v)=\cos\bar{\rho}e_z+\sin\bar{\rho}\bar{\eta}(u,v).
    \]
    Before computing $H_0$, we prove the following Lemma
    \begin{lem}\label{lem:divergenceofeta}
        \[
        \nabla^{\partial M}\cdot(\bar{\eta}(u,v))=\bar{k}(\gamma_v(u)).
        \]
    \end{lem}
    \begin{proof}[Proof of Lemma \ref{lem:divergenceofeta}]
        By definition, 
        \begin{align*}
            \nabla^{\partial M}\cdot(\bar{\eta})&=\langle \nabla_{\bar{\tau}}\bar{\eta},\bar{\tau}\rangle+\langle\nabla_{\bar{\nu}}\bar{\eta},\bar{\nu}\rangle=\bar{k}(\gamma_v(u))+\langle\nabla_{\bar{\nu}}\bar{\eta},\bar{\nu}\rangle.
        \end{align*}

    Note that
    \[
    \bar{\eta}=\sin\bar{\rho}\bar{X}+\cos\bar{\rho}\bar{\nu},
    \]
    we compute
    \begin{align*}
        \langle\nabla_{\bar{\nu}}\bar{\eta},\bar{\nu}\rangle &=\langle\nabla_{\bar{\nu}}(\sin\bar{\rho}\bar{X}+\cos\bar{\rho}\bar{\nu}),\bar{\nu}\rangle\\
        &=\sin\bar{\rho}\RN{2}(\bar{\nu},\bar{\nu})+\nabla_{\bar{\nu}}\langle \bar{X}, e_z\rangle\\
        &=\sin\bar{\rho}\RN{2}(\bar{\nu},\bar{\nu})-\sin\bar{\rho}\RN{2}(\bar{\nu},\bar{\nu})\\
        &=0.
    \end{align*}
   \end{proof}    
   Now, we prove the statement 
   \begin{align*}
       H_0 =&\nabla^{\partial M}\cdot(\bar{X})\\
           =&\nabla^{\partial M}\cdot(\cos\bar{\rho}e_z+\sin\bar{\rho}\bar{\eta})\\
       =&\langle\nabla\cos\bar{\rho},e_z\rangle-\cot\bar{\rho}\langle\nabla\cos\bar{\rho},\bar{\eta}\rangle+\sin\bar{\rho}\nabla^{\partial M}\cdot(\bar{\eta})\\
       =&-\frac{1}{\sin\bar{\rho}}\langle\nabla\cos\bar{\rho},\bar{\nu}\rangle+\sin\bar{\rho}\bar{k}(\gamma_v(u)).
   \end{align*}
   We used Lemma \ref{lem:divergenceofeta} in the last equality.
\end{proof}

We also prove the following comparison Lemma.
\begin{lem}\label{lem:weak convexity}
    For each point $p\in\partial M$, we have
    \[
    \left(H_0(p)-\nabla_{\bar{\nu}}\bar{\rho}\right)\nabla_{\bar{\nu}}\bar{\rho}-(\nabla_{\bar{\tau}}\bar{\rho})^2\geq0.
    \]
\end{lem}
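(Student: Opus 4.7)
The plan is to rewrite the tangential gradient of $\bar{\rho}$ on $\partial M$ purely in terms of the Euclidean second fundamental form $\RN{2}$ of $\partial M$. After that, the left-hand side of the asserted inequality collapses to $\det\bigl(\RN{2}|_{T_p\partial M}\bigr)$, which is nonnegative precisely by weak convexity of $\partial M$.

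First I would differentiate the defining identity $\cos\bar{\rho}=\langle \bar{X},e_z\rangle$ along an arbitrary tangent vector $Y\in T_p\partial M$. Since $e_z$ is parallel in $\mathbb{R}^3$,
\[
-\sin\bar{\rho}\,\nabla_Y\bar{\rho}=\nabla_Y\cos\bar{\rho}=\langle \nabla_Y\bar{X},\,e_z\rangle.
\]
Next I would use that $\bar{\tau}$ is tangent to a horizontal level set of $z$, hence $\langle e_z,\bar{\tau}\rangle=0$, together with the identity $\bar{\eta}=\sin\bar{\rho}\,\bar{X}+\cos\bar{\rho}\,\bar{\nu}$ from the proof of Proposition~\ref{mcformula}. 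Since $\bar{\eta}$ is horizontal, $\langle \bar{\eta},e_z\rangle=0$ forces the clean decomposition
\[
e_z=\cos\bar{\rho}\,\bar{X}-\sin\bar{\rho}\,\bar{\nu}.
\]
Combining this with $\langle \nabla_Y\bar{X},\bar{X}\rangle=0$ and $\RN{2}(Y,Z)=\langle \nabla_Y\bar{X},Z\rangle$ for $Z\in T_p\partial M$ yields the key identity
\[
\nabla_Y\bar{\rho}=\RN{2}(Y,\bar{\nu}),\qquad Y\in T_p\partial M.
\]

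Specializing $Y=\bar{\nu}$ and $Y=\bar{\tau}$ gives $\nabla_{\bar{\nu}}\bar{\rho}=\RN{2}(\bar{\nu},\bar{\nu})$ and $\nabla_{\bar{\tau}}\bar{\rho}=\RN{2}(\bar{\tau},\bar{\nu})$. Using the trace formula $H_0=\mathrm{tr}\,\RN{2}=\RN{2}(\bar{\tau},\bar{\tau})+\RN{2}(\bar{\nu},\bar{\nu})$, the left-hand side of the claimed inequality telescopes to
\[
(H_0-\nabla_{\bar{\nu}}\bar{\rho})\,\nabla_{\bar{\nu}}\bar{\rho}-(\nabla_{\bar{\tau}}\bar{\rho})^2=\RN{2}(\bar{\tau},\bar{\tau})\,\RN{2}(\bar{\nu},\bar{\nu})-\RN{2}(\bar{\tau},\bar{\nu})^2=\det\bigl(\RN{2}|_{T_p\partial M}\bigr).
\]
Since $\partial M$ is weakly convex, $\RN{2}$ is positive semi-definite on each tangent plane, so $\det\RN{2}\geq 0$ (it is the Gauss curvature of the Euclidean surface $\partial M$), and the lemma follows.

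The only real bookkeeping is the sign in the decomposition of $e_z$, and this is fixed by the orientation of $\bar{\nu}$ already used in Proposition~\ref{mcformula}; the argument then reduces to the obvious $2\times 2$ determinant identity and presents no serious obstacle. At points where $\sin\bar{\rho}=0$ the identity is obtained by continuity (or observed to be trivial, as both sides vanish).
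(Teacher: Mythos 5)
Your proof is correct and follows essentially the same route as the paper: both compute $\nabla_{\bar{\nu}}\bar{\rho}=\RN{2}(\bar{\nu},\bar{\nu})$, $\nabla_{\bar{\tau}}\bar{\rho}=\RN{2}(\bar{\tau},\bar{\nu})$, and $H_0-\nabla_{\bar{\nu}}\bar{\rho}=\RN{2}(\bar{\tau},\bar{\tau})$, then recognize the left-hand side as $\det(\RN{2})\geq 0$ by weak convexity. Your more explicit derivation of the decomposition $e_z=\cos\bar{\rho}\,\bar{X}-\sin\bar{\rho}\,\bar{\nu}$ and the remark about the $\sin\bar{\rho}=0$ points are small additional clarifications, not a different argument.
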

\begin{proof}
    Note that
    \begin{align*}
        \nabla_{\bar{\nu}}\cos\bar{\rho}&=\langle\nabla_{\bar{\nu}}\bar{X},e_z\rangle=-\sin\bar{\rho}\RN{2}(\bar{\nu},\bar{\nu}).
    \end{align*}
    We have 
    \[
    H_0-\nabla_{\bar{\nu}}\bar{\rho}=\RN{2}(\bar{\tau},\bar{\tau}).
    \]
    Here we used the definition of mean curvature.
    
    Similarly, we compute
    \begin{align*}
        \nabla_{\bar{\tau}}\cos\bar{\rho}&=\langle \nabla_{\bar{\tau}}\bar{X},e_z\rangle=-\sin\bar{\rho}\RN{2}(\bar{\tau},\bar{\nu}).
    \end{align*}
    Combining the above computations, we obtain
    \begin{align*}
        \left(H_0(p)-\nabla_{\bar{\nu}}\bar{\rho}\right)\nabla_{\bar{\nu}}\bar{\rho}-(\nabla_{\bar{\tau}}\bar{\rho})^2&\geq \RN{2}{(\bar{\nu},\bar{\nu})}\RN{2}{(\bar{\tau},\bar{\tau})}-\RN{2}(\bar{\tau},\bar{\nu})^2\\
        &=\det(\RN{2})\\
        &\geq 0
    \end{align*}
    The last inequality follows from the weak convexity assumption.
\end{proof}

\bibliographystyle{plain}
\bibliography{reference}

\end{document}